\newtheorem{theorem}{Theorem}[section]
\newtheorem{corollary}[theorem]{Corollary}
\newtheorem{proposition}[theorem]{Proposition}
\newtheorem{lemma}[theorem]{Lemma}
\newtheorem{question}[theorem]{Question}
\newtheorem{problem}[theorem]{Problem}
\theoremstyle{definition}
\newtheorem{remark}[theorem]{Remark}
\newtheorem{example}[theorem]{Example}
\begin{document}
\title[Dense-separable groups and its applications in $d$-independence]
{Dense-separable groups and its applications in $d$-independence}

  \author{Fucai Lin}
  \address{(Fucai Lin): School of mathematics and statistics,
  Minnan Normal University, Zhangzhou 363000, P. R. China}
  \email{linfucai@mnnu.edu.cn; linfucai2008@aliyun.com}

  \author{Qiyun Wu}
  \address{(Qiyun Wu): School of mathematics and statistics,
  Minnan Normal University, Zhangzhou 363000, P. R. China}
\email{904993706@qq.com}

\author{Chuan Liu}
\address{(Chuan Liu)Department of Mathematics,
Ohio University Zanesville Campus, Zanesville, OH 43701, USA}
\email{liuc1@ohio.edu}

  \thanks{The first author is supported by the Key Program of the Natural Science Foundation of Fujian Province (No: 2020J02043), the NSFC (No. 11571158), the Institute of Meteorological Big Data-Digital Fujian and Fujian Key Laboratory of Data Science and Statistics.}

  \keywords{dense-separable; dense-subgroup-separable; $d$-independent; abelian group; cosmic space; metrizable; semitopological group; compact abelian group}%insert keywords
  \subjclass[2000]{Primary 22A05; Secondary 54D65, 20K25, 20K45, 54B05, 54H11}%insert subject class

  %\date{\today}
  \begin{abstract}
A topological space is called {\it dense-separable} if each dense subset of its is separable. Therefore, each dense-separable space is separable. We establish some basic properties of dense-separable topological groups. We prove that each separable space with a countable tightness is dense-separable, and give a dense-separable topological group which is not hereditarily separable. We also prove that, for a Hausdorff locally compact group , it is locally dense-separable iff it is metrizable.

Moreover, we study dense-subgroup-separable topological groups. We prove that, for each compact torsion (or divisible, or torsion-free, or totally disconnected) abelian group, it is dense-subgroup-separable iff it is dense-separable iff it is metrizable.

Finally, we discuss some applications in $d$-independent topological groups and related structures. We prove that each regular dense-subgroup-separable abelian semitopological group with $r_{0}(G)\geq\mathfrak{c}$ is $d$-independent. We also prove that, for each regular dense-subgroup-separable bounded paratopological abelian group $G$ with $|G|>1$,
it is $d$-independent iff it is a nontrivial $M$-group iff each nontrivial primary component $G_{p}$ of $G$ is $d$-independent. Apply this result, we prove that a separable metrizable almost torsion-free paratopological abelian group $G$ with $|G|=\mathfrak{c}$ is $d$-independent. Further, we prove that each dense-subgroup-separable MAP abelian group with a nontrivial connected component is also $d$-independent.
  \end{abstract}

 \maketitle
\section{Introduction}
The dense subsets play an important role in the study of the theory of topological spaces, see \cite{AT2008, E1989}. The concept of the density of topological spaces is naturally studied. The density $d(X)$ of a space $X$, which is defined as the smallest cardinal number of the form $|A|$ for each dense subset $A$ of $X$, may determined the bounded of the weight of $X$, that is, $w(X)\leq 2^{d(X)}$; in particular, if $X$ is a separable regular space, then $w(X)\leq\mathfrak{c}$. Moreover, a compact dyadic space $X$ with $w(X)\leq 2^{\kappa}$ for some infinite cardinal $\kappa$ satisfies $d(X)\leq\kappa$; especially, if $w(X)\leq\mathfrak{c}$, then $X$ is separable. In 1975, R. Levy and R.H. McDowell \cite{LM1975} introduced the concept of dense-separable, that is, each dense subset of a space is separable; then, in 1976, J.H. Weston and J. Shilleto \cite{WS1976} studied the cardinality of dense subsets of topological spaces. In 1989, I. Juh\'{a}sz and S. Shelah \cite{JS1989} proved that the $\pi$-weight of a compact Hausdorff space $X$ is equal to the supremum of the density of all dense subsets of $X$. Recently, A. Dow and I. Juh\'{a}sz \cite{DJ2020} has proved that each dense subset of a compact space can be covered by countably many compact subsets iff it has a countable $\pi$-weight.

Moreover, it is an effective method to study topological or algebraic properties of dense subgroups of a (compact) topological group. In \cite{W1937}, A. Weil proved that each dense subgroup of compact subgroup is precompact, which provokes many topologist study the class of precompact topological groups, see \cite{AT2008}. In \cite{CD2014}, W. Comfort and D.N. Dikranjan also study dense subgroups of compact abelian groups, where they define the density nucleus, $den(G)$, of a topological group $G$ as the intersection of the family of dense subgroups of $G$; further, they studied the algebraic structures of maximally fragmentable compact topological groups. Then, E.M. Rodr\'{\i}guez and M. Tkachenko in \cite{RT2021} introduced the concept of $d$-independent in the class of topological groups, which is related to the subject of maximally fragmentable topological groups.

Until now, the study of dense subspaces or subgroups of topological groups is an interesting and meaningful topics, such as, the following Open Problems posed in \cite{AT2008}.

\begin{problem}\cite[Open Problem 4.1.1]{AT2008}
Let $G$ be a compact group. Is there a dense subspace $X$ of $G$ such that the tightness of $X$ is countable?
\end{problem}

\begin{remark}
It is well known that each compact group $G$ is dyadic, hence it follows from \cite[Theorem 10]{E1965} that $G$ is separable if $w(G)\leq\mathfrak{c}$, then there exists a countable dense subset $X$ of $G$ such that the tightness of $X$ is countable.
\end{remark}

\begin{problem}\cite[Open Problem 4.1.2]{AT2008}
Let $G$ be a compact group. Is there a dense sequential subspace of $G$?
\end{problem}

\begin{problem}\cite[Open Problem 4.1.3]{AT2008}
Let $G$ be a compact group. Is there a dense countably compact $\aleph_{0}$-monolithic subspace $X$
of $G$ such that the tightness of $X$ is countable?
\end{problem}

\begin{problem}\cite[Open Problem 4.1.4]{AT2008}
 Let $G$ be a compact group. Is there a dense subgroup of $G$ satisfying the conditions in one
of the above three problems?
\end{problem}

\begin{problem}\cite[Open Problem 1.4.3]{AT2008}
Does there exist an infinite (abelian, Boolean) topological group $G$ such that all dense subgroup of $G$ are connected?
\end{problem}

\begin{problem}\cite[Open Problem 1.4.4]{AT2008}
Does there exist an infinite (abelian) topological group $G$ such that all closed subgroup of $G$ are connected?
\end{problem}

This paper is organized as follows. In Section 2, we establish some basic properties of dense-separable spaces; in particular, dense-separable topological groups. We prove that each separable space with a countable tightness is dense-separable, and give a dense-separable topological group which is not hereditarily separable. We also prove that the product of a dense-separable regular space with a cosmic space is dense-separable. Further, we prove that, for a Hausdorff locally compact group, it is locally dense-separable iff it is metrizable.

Section 3 is dedicated to the study of dense-subgroup-separable topological groups. We prove that, for each compact torsion (or divisible, or torsion-free) abelian group, it is dense-subgroup-separable iff it is dense-separable iff it is metrizable. We also prove that, for each compact totally disconnected abelian group, it is dense-subgroup-separable iff it is dense-separable iff it is metrizable.

In Section 4, we discuss some applications in $d$-independent topological groups and related structures. We prove that, for a regular dense-subgroup-separable abelian semitopological group $G$, it is $d$-independent iff every nonempty open set in $G$ contains an independent subset of cardinality $\mathfrak{c}$. According to this result, we prove that each regular dense-subgroup-separable abelian semitopological group with $r_{0}(G)\geq\mathfrak{c}$ is $d$-independent. We also prove that, for each regular dense-subgroup-separable bounded paratopological abelian group $G$ with $|G|>1$,
it is $d$-independent iff it is a nontrivial $M$-group iff each nontrivial primary component $G_{p}$ of $G$ is $d$-independent. Apply this result, we prove that a separable metrizable almost torsion-free paratopological abelian group $G$ with $|G|=\mathfrak{c}$ is $d$-independent. Finally, we prove that each dense-subgroup-separable MAP abelian group $G$ with a nontrivial connected component is also $d$-independent.

 \maketitle
\subsection{Notation and terminology}
In this subsection, we introduce the
necessary notation and terminology which are used in
the paper. For undefined
  notation and terminology, the reader may refer to \cite{AT2008},
  \cite{E1989} and \cite{HM1998}.

All topological spaces are assumed to be Hausdorff. For a subset $A$ of a space $X$, the symbol $\overline{A}$ denotes the closure
of $A$ in $X$. Let $\mathbb{N}$, $\omega$, $\mathbb{R}$ and $\mathbb{T}$ denote the sets of all
positive integers, all non-negative integers, all real numbers and the unit circle group respectively. The cyclic group of order $n$ is $Z(n)$. The cardinality of a set $A$ will be denoted by $|A|$, and the symbol $\mathfrak{c}$ stands for the cardinality of $\mathbb{R}$.

Given a group $G$, the neutral element of $G$ is $e_{G}$ or simply $e$; moreover, if $G=\mathbb{T}$, then $e_{\mathbb{T}}$ is denoted by 1. Recall that an element $g$ of $G$ is {\it finite order} or {\it torsion} if $g^{n}=e$ (equivalently, $ng=e$, if $G$ is abelian) for some $n\in \mathbb{N}$, and the smallest $n\in \mathbb{N}$ for which $g^{n}=e$ is said to be the {\it order} of $g$. We say that $G$ is a {\it torsion group} if all elements of $G$ have finite orders, and say that $G$ is {\it torsion-free} if the group $G$ has no elements of finite order, except for $e$. A group $G$ is {\it divisible} if the equation $x^{n} =g$ (equivalently, $nx=g$, if $G$ is abelian) has a solution $x\in G$ for all $g\in G$ and $n\in\mathbb{N}$. Moreover, a torsion abelian group $G$ is said to be {\it bounded} if there is a positive integer $m$ such that $mg=e$, for each $g\in G$, and the minimum $m$ with this property is called the {\it period} of $G$. If a torsion group $G$ contains elements of arbitrary big order, then $G$ is said to be {\it unbounded}.

Suppose that $B$ is a nonempty subset of an abelian group $G$. For any pairwise distinct elements $b_{1}, \ldots, b_{m}\in B$ and any integers $n_{1}, \ldots, n_{m}$, if $n_{1}b_{1}+\cdots+n_{m}b_{m}=e$ implies $n_{i}b_{i}=e$ for every $i=1, \cdots, m$, then we say that $B$ is {\it linearly independent} or simply {\it independent subset} of $G$. Assume that $G$ is not torsion, then let $\mathcal{I}$ be the family of all independent subsets of $G$ consisting of elements of infinite order, which is partially ordered by inclusion. From Zorn's lemma, it follows that each element of $\mathcal{I}$ is contained in a maximal element of $\mathcal{I}$. If $A$ and $B$ are maximal elements of $\mathcal{I}$, then $|A|=|B|$ by \cite{R2012}. Then, we say that the cardinality of a maximal element of $\mathcal{I}$ is the {\it torsion-free rank} of $G$, which is denoted by $r_{0}(G)$.

Given an abelian group $G$, put $nG=\{ng: g\in G\}$ and $G[n]=\{g\in G: ng=e\}$ for each $n\in\mathbb{N}$. We say that an abelian group $G$ is {\it almost torsion-free} if each $G[n]$ is finite. Clearly, the elements of $G$ whose order is a power of a fixed prime $p$ form a subgroup $G_{p}$, then we say that $G_{p}$ is the {\it $p$-prime component of $G$}; in particular, if $G=G_{p}$ for some prime $p$, then $G$ is called a {\it $p$-group}.

For a semitopological group $G$ with $|G|=\kappa\geq\omega$, we say that $G$ is {\it maximally fragmentable} if it contains an independent family $\{D_{\alpha}: \alpha<\kappa\}$ of dense subgroups satisfying $D_{\alpha}\cap\langle\bigcup_{\beta\neq\alpha}D_{\beta}\rangle=\{e\}$ for each $\alpha<\kappa$. We say that a nontrivial semitopological group $G$ is {\it $d$-independent} if, for every subgroup $S$ of $G$ with $|S|<\mathfrak{c}$, there exists a countable dense subgroup $H$ of $G$ such that $S\cap H=\{e\}$. Clearly, each $d$-independent semitopological group is separable.

For a Tychonoff space $X$, denote the free abelian topological group $A(X)$ and free topological group $F(X)$ by $G(X)$. For each subgroup $H$ of $G(X)$, let $X_{H}$ be the minimal subset of $X$ such that $H\subset G(X_{H}, X)$.

\smallskip
Let $X$ be a space and $\mathcal{P}$ a family of subsets of $X$.  Then we say that

\smallskip
$\bullet$ $X$ is separable if there exists a countable subset $D$ of $X$ such that the closure of $D$ is $X$;

\smallskip
$\bullet$ $X$ has {\it countable tightness} if, for any subset $A$ of $X$ and any point $x\in X$ with $x\in \overline{A}$, there exists a countable subset $D$ of $A$ such that $x\in \overline{D}$;

\smallskip
$\bullet$ $\mathcal{P}$ is a {\it network} of $X$ if for each point $x\in X$ and any open subset $U$ with $x\in U$ there exits $P\in\mathcal{P}$ such that $x\in P\subset U$;

\smallskip
$\bullet$ $\mathcal{P}$ is a {\it $\pi$-network} of $X$ if for any nonempty open subset $U$ there exits $P\in\mathcal{P}$ such that $P\subset U$;

\smallskip
$\bullet$ $X$ is said to be {\it cosmic} if $X$ is regular and has countable network.

\smallskip
Recall that a subset $X$ of an abstract group $G$ is {\it unconditionally closed} in $G$ if $X$ is closed in every Hausdorff topological group topology on $G$. We say that $G$ is an {\it $M$-group} if each proper unconditionally closed subgroups of $G$ has index at least $\mathfrak{c}$. For an abelian $G$, it follows from \cite[Proposition 1.4]{DS2016} that it is an $M$-group iff, for every integer $m\geq 1$, either $|mG|=1$ or $|mG|\geq\mathfrak{c}$.

For a space $X$, {\it the compact covering number, cov$_{k}(X)$}, is the least cardinal $\tau$ such that $X$ can be covered by $\tau$ many compact sets; in \cite{DJ2020}, set $$d_{k}(X)=\min\{cov_{k}(Y): Y\ \mbox{is a dense subset of}\ X\}$$ and then $$\delta_{k}(X)=\sup\{d_{k}(Y): Y\ \mbox{is a dense subset of}\ X\}.$$ Recall that a space $X$ is {\it $k$-separable} if $d_{k}(X)\leq\aleph_{0}$ and {\it densely $k$-separable} \cite{DJ2020} if $\delta_{k}(X)\leq\aleph_{0}$.

\smallskip
If $X$ is a space, let $$\delta(X)=\sup\{d(Y): Y\ \mbox{is dense in}\ X\};$$ $\delta(X)$ is the {\it strong density of $X$} \cite{WS1976}. If $\delta(X)=\omega$, we say that $X$ is {\it dense-separable} \cite{LM1975}, that is, each dense subspace of $X$ is separable. Clearly, each hereditarily separable space is dense-separable, but not vice versa, see Example~\ref{r0} in Section 2. Moreover, from \cite{DJ2020}, it follows that there exists a $\sigma$-compact non-separable space such that $d_{k}(X)<d(X)$. However, the following question was posed in \cite{DJ2020}.

\begin{question}\cite[Question 1]{DJ2020}
Is there a regular Hausdorff space $X$ such that $\delta_{k}(X)<\delta(X)$?
\end{question}

 \maketitle
\section{Basic properties of dense-separable semitopological groups}
In this section, we mainly establish some basic properties of dense-separable semitopological groups. We start with two simple propositions, which show the class of dense-separable spaces is wide.

\begin{proposition}
Each monotonically normal separable space is dense-separable.
\end{proposition}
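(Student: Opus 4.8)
The plan is to prove the stronger statement that a monotonically normal separable space $X$ is in fact hereditarily separable; since, as already observed, every hereditarily separable space is dense-separable, this immediately gives the proposition. The route is through the spread: first I would show that $X$ has no uncountable discrete subspace, i.e. $s(X)\leq\omega$, and then invoke the well-known fact that for monotonically normal spaces the spread, hereditary density, and hereditary Lindel\"of degree all coincide.

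The heart of the argument is the first step, where I would use the Heath--Lutzer--Zenor monotone normality operator $\mu$, which assigns to each pair $(x,U)$ with $x\in U$ open an open set $\mu(x,U)$ satisfying $x\in\mu(x,U)\subseteq U$ together with the key incompatibility condition that $\mu(x,U)\cap\mu(y,V)\neq\emptyset$ forces $x\in V$ or $y\in U$. Suppose toward a contradiction that $D=\{d_{\alpha}:\alpha<\omega_{1}\}$ is a discrete subspace, and fix for each $\alpha$ an open set $W_{\alpha}$ with $W_{\alpha}\cap D=\{d_{\alpha}\}$. Put $G_{\alpha}=\mu(d_{\alpha},W_{\alpha})$, so that $d_{\alpha}\in G_{\alpha}\subseteq W_{\alpha}$. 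For $\alpha\neq\beta$, if $G_{\alpha}\cap G_{\beta}\neq\emptyset$ then $d_{\alpha}\in W_{\beta}$ or $d_{\beta}\in W_{\alpha}$, both impossible since $W_{\beta}\cap D=\{d_{\beta}\}$ and $W_{\alpha}\cap D=\{d_{\alpha}\}$. Hence $\{G_{\alpha}:\alpha<\omega_{1}\}$ is an uncountable family of pairwise disjoint nonempty open sets. Since $X$ is separable, any countable dense subset must meet each $G_{\alpha}$, which is absurd; therefore $s(X)\leq\omega$.

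With countable spread in hand, the second step is to quote the standard cardinal-function result that every monotonically normal space satisfies $s(X)=hd(X)=hL(X)$, so $s(X)\leq\omega$ gives $hd(X)\leq\omega$, i.e. $X$ is hereditarily separable. Consequently every subspace of $X$, and in particular every dense subspace, is separable, so $X$ is dense-separable.

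The only genuinely delicate point is the translation of a putative uncountable discrete subspace into an uncountable cellular family: everything hinges on combining the incompatibility property of $\mu$ with the witnessing neighborhoods $W_{\alpha}$, together with the elementary observation that separability forces the countable chain condition. By contrast, the passage from countable spread to hereditary separability is a direct citation of the cardinal theory of monotonically normal spaces, so I expect no further obstacle there.
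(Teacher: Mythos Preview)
Your approach is correct and essentially the same as the paper's: both establish that a monotonically normal separable space is hereditarily separable and then conclude dense-separability. The only difference is that the paper simply cites Gartside \cite{G1997} for the hereditary separability, whereas you unpack the argument by showing $s(X)\le\omega$ via the Heath--Lutzer--Zenor operator and then invoking the equality $s=hd=hL$ for monotonically normal spaces.
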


\begin{proof}
Since monotonically normal separable space is hereditarily separable \cite{G1997}, the result is obvious.
\end{proof}

\begin{proposition}\label{p5}
Each separable space $X$ with a countable tightness is dense-separable.
\end{proposition}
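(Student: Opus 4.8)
The plan is to extract, from a fixed countable dense set of $X$, a countable subset of the given dense set $D$ that is still dense, using countable tightness as the bridge. Concretely, since $X$ is separable, I would first fix a countable dense subset $C=\{c_{n}:n\in\omega\}$ of $X$. Now let $D$ be an arbitrary dense subset of $X$; the goal is to produce a countable subset of $D$ that is dense in $D$, which would witness that $D$ is separable.

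The key step is to apply countable tightness pointwise along $C$. Because $D$ is dense, $\overline{D}=X$, so every point $c_{n}$ of $C$ lies in $\overline{D}$. By countable tightness, for each $n$ there is a countable set $D_{n}\subseteq D$ with $c_{n}\in\overline{D_{n}}$. I would then form $E=\bigcup_{n\in\omega}D_{n}$, which is a countable subset of $D$ as a countable union of countable sets.

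It remains to verify that $E$ is dense, first in $X$ and then in $D$. Since $c_{n}\in\overline{D_{n}}\subseteq\overline{E}$ for every $n$, we get $C\subseteq\overline{E}$, whence $X=\overline{C}\subseteq\overline{E}$ and so $\overline{E}=X$, i.e.\ $E$ is dense in $X$. As $E\subseteq D\subseteq X$ and $E$ is dense in the ambient space $X$, it is in particular dense in the subspace $D$. Thus $E$ is a countable dense subset of $D$, so $D$ is separable; since $D$ was an arbitrary dense subset, $X$ is dense-separable.

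I do not expect a serious obstacle here: the whole argument is a single clean application of countable tightness to the points of a countable dense set, followed by a diagonal union. The only point requiring a moment of care is the transfer of denseness between $X$ and the subspace $D$ in the last step, namely observing that a set dense in the whole space $X$ remains dense in any intermediate subset $D$ containing it; this is immediate from the definition of the subspace closure. I would also note that separability of $X$ is genuinely used (to supply the countable $C$), so the hypothesis is not superfluous.
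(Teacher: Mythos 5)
Your proof is correct and is essentially the same as the paper's: both fix a countable dense set, apply countable tightness at each of its points to extract countable subsets of the dense set $D$, and take the union. Your closure argument ($C\subseteq\overline{E}$, hence $X=\overline{C}\subseteq\overline{E}$) is in fact slightly cleaner than the paper's wording of the same step.
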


\begin{proof}
Let $H$ be an arbitrary dense subspace of $X$ and $D$ a countable dense subset of $X$. Since $X$ is of countable tightness, it follows that, for any $d\in D$, there exists a countable subset $H_{d}$ of $H$ such that $d\in \overline{H_{d}}$. Put $H^{\prime}=\bigcup_{d\in D}H_{d}\subset H$. Clearly, $H^{\prime}$ is countable. Then $X=\overline{D}\subset\bigcup_{d\in D}\overline{H_{d}}\subset\overline{\bigcup_{d\in D}H_{d}}=\overline{H^{\prime}}$, which shows that $H^{\prime}$ is a countable dense subset of $H$.
\end{proof}

Now we give some examples of dense-separable and non-dense-separable spaces respectively.

\begin{example}\label{r0}
(1) There exists a separable space with a countable tightness is not hereditarily separable.
Indeed, let $G$ be the Sorgenfrey line. Since $G\times G$ is a regular separable first-countable space, it follows from Proposition~\ref{p5} that $G\times G$ is dense-separable; however, it is well known that $G\times G$ is not hereditarily separable.

(2) Let $I$ be the closed unit interval, and let $X=I^{I}$ endowed with the product topology; then it follows from \cite[Example 5.3]{LM1975} that $X$ is a separable space but not dense-separable.

(3) For any separable metrizable space or separable and ordered space $X$, each compactification of $X$ is dense-separable, see \cite{LM1975}.

(4) In \cite[Theorem 3]{M2008}, E. Momtahan gave an algebraic characterization of a non-empty compact Hausdorff space $X$ such that it is dense-separable. Indeed, the author prove that, for any non-empty compact Hausdorff space $X$, it is dense-separable iff $C(X)$ has the following property: if $\{M_{i}\}_{i\in I}$ is a family of maximal ideals with $\bigcap_{i\in I}M_{i}=(0)$, then there exists a countable subset $I_{0}$ of $I$ such that $\bigcap_{i\in I_{0}}M_{i}=(0)$, where $C(X)$ is the ring of real-valued, continuous functions on a completely regular space $X$.

(5) It is well known that each hereditarily separable Moore space is metrizable; however, there exists a dense-separable Moore paratopological group is not metrizable, such as, the space in \cite[Example 4.1]{LL2012}.
\end{example}

It is well known that the product of no more than $\mathfrak{c}$ separable spaces is separable, see \cite[Theorem 2.3.15]{E1989}. However, it follows from (2) of Example~\ref{r0} that $I^{I}$ is separable and not dense-separable, where $I$ is the closed unit interval. Indeed, we also have the following example.

\begin{example}
There exists a separable non-compact topological group which is not dense-separable.
\end{example}

\begin{proof}
Let $A(I^{I})$ be the free abelian topological group. Then it follows from \cite[Lemma 3.1]{CI1977} that $A(I^{I})$ is separable and non-compact. However, since $I^{I}$ is not dense-separable, there exists a dense subset $X$ of $I^{I}$ such that $X$ is not separable. Then the free abelian topological group $A(X)$ is a topological subgroup of $A(I^{I})$ by \cite[Lemma 7.7.2 and Theorem 7.7.4]{AT2008}. Clearly, $A(X)$ is dense in $A(I^{I})$. However, it follows from \cite[Lemma 3.1]{CI1977} again that $A(X)$ is not separable. Therefore, $A(I^{I})$ is not dense-separable.
\end{proof}

\begin{example}\label{2022eee}
There exists a dense-separable topological group $G$ which is not hereditarily separable.
\end{example}

\begin{proof}
Let $X$ be the Michael line, see \cite[Example 1.8.5]{LY2016}. Then $X^{n}$ is Lindel\"{o}f for each $n\in\mathbb{N}$ by \cite[Theorem 1]{L1992}. Moreover, it is obvious that $X$ is submetrizable. Therefore, $G=C_{p}(X)$ is separable and has a countable tightness by
\cite[Corollaries 5.1.7 and ~5.4.3]{Shoulin2004}, thus it is dense-separable by Proposition~\ref{p5}. However, $X$ is not hereditarily Lindel\"{o}f, hence it follows from \cite[Theorem 6]{Z1980} that $G$ is not hereditarily separable.
\end{proof}

The following proposition and corollary are obvious.

\begin{proposition}
Each dense subset of a dense-separable space is also dense-separable.
\end{proposition}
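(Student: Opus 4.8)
The plan is to reduce everything to the transitivity of density together with the bare definition of dense-separability. Let $X$ be dense-separable and let $D$ be a dense subset of $X$; to show that $D$ is itself dense-separable I must verify that every dense subset of $D$ (in the subspace topology) is separable. So first I would fix an arbitrary subset $E$ of $D$ that is dense in $D$, and the whole argument then hinges on promoting this to denseness in all of $X$.

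The key step is the observation that $E$ is in fact dense in $X$. Writing $\mathrm{cl}_{X}$ for closure in $X$, the hypothesis that $E$ is dense in $D$ means $\mathrm{cl}_{X}(E)\cap D=D$, i.e. $D\subseteq\mathrm{cl}_{X}(E)$; combining this with $\mathrm{cl}_{X}(D)=X$ gives
\[
X=\mathrm{cl}_{X}(D)\subseteq\mathrm{cl}_{X}\bigl(\mathrm{cl}_{X}(E)\bigr)=\mathrm{cl}_{X}(E),
\]
so $E$ is dense in $X$. This is just the standard fact that density composes transitively, and it requires no separation axioms or cardinality assumptions. Once $E$ is seen to be dense in $X$, dense-separability of $X$ applies directly and yields that $E$ is separable.

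The final point to note is that ``separable'' is an intrinsic topological property of $E$: a countable dense subset of $E$ (guaranteed by the previous paragraph) witnesses separability of $E$ whether we regard $E$ as a subspace of $X$ or of $D$, since the subspace topology $E$ inherits is the same in both cases. Hence $E$ is separable as a subspace of $D$, and since $E$ was an arbitrary dense subset of $D$, this shows $D$ is dense-separable, completing the argument. I do not anticipate any genuine obstacle here; the only thing worth stating carefully is the transitivity computation above, and the mild check that separability of the witnessing set is unaffected by the ambient space.
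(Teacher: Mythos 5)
Your proof is correct: the transitivity-of-density computation ($E$ dense in $D$ and $D$ dense in $X$ imply $E$ dense in $X$) plus the intrinsic nature of separability is exactly the argument that makes the result immediate, and the paper itself states this proposition without proof as ``obvious.'' Your write-up is simply the careful spelling-out of that same observation, so there is nothing to reconcile.
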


\begin{corollary}
If $X$ and $Y$ are two spaces such that $X\times Y$ is dense-separable, then $X$ and $Y$ are dense-separable.
\end{corollary}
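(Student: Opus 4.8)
The plan is to reduce the dense-separability of each factor to that of the product by pushing forward through the projection maps. By symmetry it suffices to prove that $X$ is dense-separable, and I will assume both factors are nonempty (otherwise $X\times Y$ is empty and the hypothesis is vacuous, so there is nothing forcing anything on the factors in that degenerate case).

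First I would fix an arbitrary dense subset $D$ of $X$ and form the subset $D\times Y$ of $X\times Y$. Since the closure of a product is the product of the closures, $\overline{D\times Y}=\overline{D}\times\overline{Y}=X\times Y$, so $D\times Y$ is dense in $X\times Y$. As $X\times Y$ is dense-separable, $D\times Y$ is separable; let $E$ be a countable dense subset of $D\times Y$. (Equivalently, one may invoke the preceding proposition to observe that $D\times Y$, being a dense subset of the dense-separable space $X\times Y$, is itself dense-separable, hence in particular separable.) Next I would push $E$ forward under the projection $p_{X}\colon D\times Y\to D$, which is continuous and, because $Y\neq\emptyset$, surjective. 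Continuity gives $p_{X}(\overline{E})\subseteq\overline{p_{X}(E)}$, and since $\overline{E}=D\times Y$ we get $D=p_{X}(D\times Y)=p_{X}(\overline{E})\subseteq\overline{p_{X}(E)}$. Thus $p_{X}(E)$ is a countable dense subset of $D$, so $D$ is separable. As $D$ was an arbitrary dense subset of $X$, this shows $X$ is dense-separable, and the same argument with the roles of $X$ and $Y$ interchanged settles $Y$.

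The argument is entirely routine, so there is no real obstacle; the only two points requiring a moment's attention are the nonemptiness of $Y$, which is exactly what makes the projection surjective, and the standard fact that the continuous image of a separable space is separable, which is precisely the closure computation carried out above.
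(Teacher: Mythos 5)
Your proof is correct and is exactly the argument the paper has in mind: the paper states this corollary without proof (``obvious''), immediately after the proposition that dense subsets of dense-separable spaces are dense-separable, and your route---$D$ dense in $X$ gives $D\times Y$ dense in $X\times Y$, hence separable, then push a countable dense set forward through the projection---is precisely that omitted argument filled in. Your side remark about nonempty factors is a fair pedantic point (with $Y=\emptyset$ the product is trivially dense-separable while $X$ is arbitrary), but it does not affect the intended content.
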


Let $X, Y$ be two spaces and $f: X\rightarrow Y$ a mapping. We say that $f$ is {\it almost open} if the interior of $f(U)$ is nonempty in $Y$ for each nonempty open subset $U$ of $X$.

\begin{proposition}\label{p7}
Let $f: X\rightarrow Y$ be an almost open continuous mapping. If $X$ is dense-separable, then $Y$ is dense-separable.
\end{proposition}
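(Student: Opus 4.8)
The plan is to verify the definition of dense-separability for $Y$ head-on: I would take an arbitrary dense subset $D$ of $Y$ and manufacture a countable dense subset of it by transporting the problem across $f$ to $X$, where dense-separability is available. Throughout I would treat $f$ as onto (a continuous surjection), which is the natural setting for an almost open map and is what makes images of dense sets stay dense; I would flag this assumption at the outset.

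The crux is to show that the preimage $f^{-1}(D)$ is dense in $X$, and this is exactly the step where almost openness is indispensable. Given a nonempty open $U\subseteq X$, the hypothesis furnishes a nonempty open $W\subseteq\operatorname{int}f(U)$. Since $D$ is dense in $Y$, there is a point $d\in D\cap W\subseteq D\cap f(U)$, and choosing any $x\in U$ with $f(x)=d$ places $x$ in $U\cap f^{-1}(D)$. Hence $f^{-1}(D)$ meets every nonempty open subset of $X$, so it is dense. I expect this to be the main obstacle, since it is the one place where the two hypotheses — almost openness of $f$ and density of $D$ — must be combined; everything afterward is comparatively routine.

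Next, because $X$ is dense-separable and $f^{-1}(D)$ is dense in $X$, the set $f^{-1}(D)$ is separable, so I would pick a countable dense subset $C$ of it. As $f^{-1}(D)$ is itself dense in $X$, the set $C$ is in fact dense in all of $X$. I would then push forward and claim that $f(C)$ is the desired countable dense subset of $D$. It is countable and satisfies $f(C)\subseteq f(f^{-1}(D))\subseteq D$. For its density I would use continuity, $f(X)=f(\overline{C})\subseteq\overline{f(C)}$, together with surjectivity, which gives $Y=f(X)\subseteq\overline{f(C)}$; thus $f(C)$ is dense in $Y$, and a fortiori dense in $D$. Consequently $D$ is separable.

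Since $D$ was an arbitrary dense subset of $Y$, this shows every dense subset of $Y$ is separable, i.e., $Y$ is dense-separable, completing the argument. If one wished to avoid assuming surjectivity, I would instead work with the closure of $f(X)$ in $Y$ or note that the density step above already produces $\overline{f(C)}\supseteq f(X)$, but the clean statement genuinely relies on the image being dense, so assuming $f$ onto is the most economical route.
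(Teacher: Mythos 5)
Your proof is correct and follows essentially the same route as the paper's: pull the dense set $D$ back to $f^{-1}(D)$, check density via almost openness, apply dense-separability of $X$, and push a countable dense subset forward. You also sensibly make explicit the surjectivity assumption that the paper leaves implicit (it is indeed needed, as the statement fails otherwise), and you fill in the density-of-$f^{-1}(D)$ step that the paper dismisses as ``easy to see.''
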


\begin{proof}
Assume that $X$ is dense-separable. Take any dense subset $D$ of $Y$. Since $f$ is almost open, it is easy to see that $f^{-1}(D)$ is dense in $X$, then there exists a countable subset $F$ of $X$ such that $F$ is dense in $f^{-1}(D)$. Then $f(F)$ is a countable dense subset of $D$. Hence $Y$ is dense-separable.
\end{proof}

\begin{corollary}\label{c1}
Let $G$ be a dense-separable topological group and $H$ is a closed subgroup of $G$. Then the naturally quotient space $G/H$ is dense-separable.
\end{corollary}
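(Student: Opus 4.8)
The plan is to exhibit the canonical projection $\pi\colon G\to G/H$ as an almost open continuous surjection and then apply Proposition~\ref{p7} essentially verbatim. Thus the entire argument reduces to verifying that $\pi$ meets the hypotheses of that proposition, after which dense-separability of $G/H$ is automatic.

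First I would set up the notation: let $\pi\colon G\to G/H$ be the natural map $g\mapsto gH$, where $G/H$ carries the quotient topology induced by $\pi$. Continuity and surjectivity of $\pi$ are immediate from the definition of the quotient topology, so the only point requiring work is openness of $\pi$ (which is stronger than the almost openness demanded by Proposition~\ref{p7}, since an open map sends every nonempty open set to a nonempty open set, whose interior is therefore nonempty).

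Next I would establish openness using the homogeneity of a topological group. For any open subset $U\subseteq G$ one has $\pi^{-1}(\pi(U))=UH=\bigcup_{h\in H}Uh$; since right translation by each $h$ is a homeomorphism of $G$, every $Uh$ is open, whence $UH$ is open as a union of open sets. By the defining property of the quotient topology, $\pi(U)$ is then open in $G/H$. Hence $\pi$ maps open sets to open sets, so $\pi$ is almost open.

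Finally, with $G$ dense-separable and $\pi$ an almost open continuous mapping of $G$ onto $G/H$, Proposition~\ref{p7} yields at once that $G/H$ is dense-separable, completing the proof. I do not anticipate any genuine obstacle: the sole nontrivial ingredient is the openness of the canonical quotient map, which is the standard fact for coset spaces of topological groups and follows immediately from translation-invariance of the topology. It is worth remarking that closedness of $H$ is not used in securing dense-separability itself; it is imposed only to ensure that $G/H$ is Hausdorff, consistent with the standing Hausdorff convention of the paper.
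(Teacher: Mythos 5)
Your proposal is correct and follows exactly the paper's argument: the paper likewise observes that the canonical quotient map is open (citing \cite[Theorem 1.5.1]{AT2008} rather than proving it via the translation argument $\pi^{-1}(\pi(U))=UH$ as you do) and then applies Proposition~\ref{p7}. The only difference is that you supply the standard proof of openness in place of the citation, which is a matter of exposition, not of method.
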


\begin{proof}
From \cite[Theorem 1.5.1]{AT2008}, it follows that the naturally quotient mapping is open, hence $G/H$ is dense-separable by Proposition~\ref{p7}.
\end{proof}

\begin{proposition}\label{pp2}
Let $X$ be a dense-separable space. Then each open subspace is dense-separable.
\end{proposition}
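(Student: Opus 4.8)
The plan is to reduce the question about an open subspace to the space $X$ itself, where the hypothesis of dense-separability is available. So fix an open subspace $U$ of $X$ and let $D$ be an arbitrary dense subset of $U$; the goal is to produce a countable dense subset of $D$. The point is that $D$ need not be dense in $X$, but one can repair this by adjoining everything outside $U$. Accordingly I would set
$$A = D \cup (X\setminus U),$$
and work with $A$ as a dense subset of $X$.

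First I would verify that $A$ is dense in $X$. Since $D$ is dense in the subspace $U$, we have $U\subseteq\overline{D}$, and therefore
$$\overline{A}\supseteq\overline{D}\cup(X\setminus U)\supseteq U\cup(X\setminus U)=X.$$
Thus $A$ is dense in $X$, and because $X$ is dense-separable, $A$ is separable. Fix a countable subset $C$ of $A$ that is dense in $A$.

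It then remains to extract from $C$ a countable dense subset of $D$. Two observations do this. Since $D\subseteq U$ while $(X\setminus U)\cap U=\emptyset$, we have $A\cap U=D$; and since $U$ is open in $X$, the set $U\cap A$ is open in the subspace $A$. Now I would invoke the elementary principle that the trace of a dense set on an open set is dense in that open set: as $C$ is dense in $A$ and $U\cap A$ is open in $A$, the set $C\cap U$ is dense in $A\cap U=D$. Since $C\cap U$ is countable, $D$ is separable; and as $D$ was an arbitrary dense subset of $U$, this shows that $U$ is dense-separable.

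I do not expect a serious obstacle in this argument. The only genuinely substantive step is the last one, where countability of a dense subset of $D$ is recovered from countability of a dense subset of $A$ via the open-trace principle; the one move that is not completely mechanical is the initial enlargement of $D$ to a set that is dense in all of $X$ yet still meets $U$ in exactly $D$, which is precisely what makes both halves of the argument fit together.
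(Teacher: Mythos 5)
Your proof is correct and follows essentially the same route as the paper's: enlarge the dense subset $D$ of $U$ to $D\cup(X\setminus U)$, which is dense in $X$, extract a countable dense subset by dense-separability of $X$, and intersect it back with $U$ to get a countable dense subset of $D$. You merely spell out explicitly (via the open-trace principle and the identity $A\cap U=D$) what the paper dismisses as "easy to see."
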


\begin{proof}
Let $U$ be an arbitrary open set of $X$, and take any dense subset $Y$ of $U$. Then $Y\cup (X\setminus U)$ is a dense of $X$. Since $X$ is dense-separable, there exists a countable dense-subset $D$ of $Y\cup (X\setminus U)$. Put $D_{1}=D\cap Y$. Clearly, it is easy to see that $D_{1}$ is nonempty and dense in $Y$, hence $D_{1}$ is dense in $U$.
\end{proof}

\begin{proposition}
Let $X$ be a dense-separable space. If each closed subspace of $X$ is dense-separable, then $X$ is hereditarily separable.
\end{proposition}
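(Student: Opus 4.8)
The plan is to verify hereditary separability straight from the definitions, reducing each subspace to a dense subset of a closed subspace, so that the hypothesis can be applied. Let $A$ be an arbitrary subspace of $X$; the goal is to produce a countable subset of $A$ that is dense in $A$.

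First I would pass to the closure $F=\overline{A}$ of $A$ in $X$. By construction $F$ is a closed subspace of $X$, so the hypothesis that every closed subspace of $X$ is dense-separable applies, giving that $F$ is dense-separable. The next observation is that $A$ is dense in $F$: the closure of $A$ computed inside $F$ (in the subspace topology) is again $F$. Since $F$ is dense-separable, by definition \emph{each} dense subset of $F$ is separable; applying this to the particular dense subset $A$ yields that $A$ is separable. As $A$ was arbitrary, every subspace of $X$ is separable, which is exactly the assertion that $X$ is hereditarily separable.

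I do not expect any genuine obstacle here; the argument is essentially a one-line unwinding of the two definitions involved. The only points that deserve a moment's care are bookkeeping about which topology is in force when dense-separability of $F$ is invoked (namely the subspace topology on $F$ inherited from $X$) and the fact that $A$ is dense in $F$ with respect to that same topology, both of which are immediate. It is worth remarking in passing that the stated hypothesis that $X$ itself be dense-separable is redundant for this argument, since $X$ is a closed subspace of itself and hence already dense-separable by the second hypothesis; the proof in fact uses only that closed subspaces are dense-separable.
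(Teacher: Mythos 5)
Your proof is correct and follows exactly the same route as the paper's: take an arbitrary subspace, observe that its closure is a closed (hence dense-separable) subspace in which the original subspace sits densely, and conclude separability. Your added remark that the dense-separability of $X$ itself is redundant (being the case of the closed subspace $X$) is a valid observation that the paper does not make, but the core argument is identical.
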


\begin{proof}
Let $Y$ be an arbitrary nonempty subspace of $X$. Then $\overline{Y}$ is dense separable, hence there exists a countable subset $D$ of $Y$ such that $D$ is dense $Y$. Therefore, $X$ is hereditarily separable.
\end{proof}

By (1) of Example~\ref{r0}, there exists a closed subspace of a dense-separable space is not separable. However, the space $G\times G$ in (1) of Example~\ref{r0} is not normal, hence we have the following question.

\begin{question}\label{2022qqq}
If $G$ is a normal (or perfect normal) dense-separable space, is $G$ hereditarily separable? What if $G$ is a topological group?
\end{question}

\begin{remark}
If topological group $G$ in Example~\ref{2022eee} is normal, then Question~\ref{2022qqq} is negative.
\end{remark}

Moreover, it is natural to ask that if $\{G_{\alpha}: \alpha<\mathfrak{c}\}$ is a family of dense-separable topological groups, is the product $\prod_{\alpha<\mathfrak{c}}G_{\alpha}$ dense-separable? Indeed, we have the following proposition.

\begin{proposition}\label{p6}
For any a family $\{G_{\alpha}: \alpha<\omega_{1}\}$ of non-trivial Hausdorff spaces, the dense $\Sigma$-product $\Sigma\Pi_{\alpha\in\omega_{1}}G_{\alpha}$ with basic point $b=(b_{\alpha})$ is not separable, where the $\Sigma$-product of $\{G_{\alpha}: \alpha<\omega_{1}\}$ with basic point $b$ is the subspace of $\Pi_{\alpha\in\omega_{1}}G_{\alpha}$ consisting of all points $x\in X$ such that only countably many coordinates $g_{\alpha}$ of $g$ are distinct from the corresponding coordinates $b_{\alpha}$ of $b$.
\end{proposition}

\begin{proof}
Assume that $\Sigma\Pi_{\alpha\in\omega_{1}}G_{\alpha}$ is separable, then there exists a countable dense subset $D$ of $\Sigma\Pi_{\alpha\in\omega_{1}}G_{\alpha}$. For each $d=(d_{\alpha})\in D$, there exists a countable subset $A_{d}\subset\omega_{1}$ such that $d_{\alpha}=b_{\alpha}$ for each $\alpha\in\omega_{1}\setminus A_{d}$. Put $A=\bigcup_{d\in D}A_{d}$; then $A$ is countable. Take any $\beta\in \omega_{1}\setminus A$ and take $h=(h_{\alpha})\in\Sigma\Pi_{\alpha\in\omega_{1}}G_{\alpha}$ with $h_{\beta}\neq b_{\beta}$ and $h_{\alpha}=b_{\alpha}$ for any $\alpha\neq\beta$. By the Hausdorff property, choose any open neighborhood $V$ of $h_{\beta}$ in $G_{\beta}$ such that $b_{\beta}\not\in V$. Then $U=V\times \Pi_{\alpha\in\omega_{1}\setminus\{\beta\}}G_{\alpha}$ is an open neighborhood of $h$ with $U\cap D=\emptyset$, which is a contradiction.
\end{proof}

A semitopological group $G$ is said to be {\it dense-subgroup-separable} if each dense subgroup is separable. Clearly, each dense-separable semitopological group is dense-subgroup-separable. However, the following question is still unknown for us.

\begin{question}\label{q2}
Let $G$ be a dense-subgroup-separable semitopological group. Is $G$ dense-separable? What if $G$ is a topological group?
\end{question}

\begin{corollary}
The compact group $D^{\omega_{1}}$ is not dense-subgroup-separable, where $D=\{0, 1\}$ is the Boolean group with the discrete topology.
\end{corollary}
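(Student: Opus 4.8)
The plan is to exhibit a single dense subgroup of $D^{\omega_{1}}$ that fails to be separable, and the natural candidate is the $\Sigma$-product itself. Writing $e=(0)_{\alpha<\omega_{1}}$ for the neutral element of $D^{\omega_{1}}$, I would set $\Sigma=\Sigma\Pi_{\alpha\in\omega_{1}}D$ with basic point $e$, that is, the set of all $x=(x_{\alpha})\in D^{\omega_{1}}$ whose support $\{\alpha:x_{\alpha}\neq 0\}$ is countable. Everything then reduces to checking three things about $\Sigma$: that it is a subgroup, that it is dense, and that it is not separable.

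First I would verify that $\Sigma$ is genuinely a \emph{subgroup} of $D^{\omega_{1}}$, not merely a dense subspace. Since $D$ is a group and the support of a coordinatewise sum is contained in the union of the two supports, the sum of two elements of countable support again has countable support; as $e\in\Sigma$ and $D$ is a group, $\Sigma$ is closed under the operation and under inversion, hence a subgroup. This is the one point that distinguishes the present statement from Proposition~\ref{p6}: there the $\Sigma$-product is only treated as a subspace, whereas here its group structure is what makes it relevant to dense-\emph{subgroup}-separability.

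Next I would check density. Any nonempty basic open set $U$ of $D^{\omega_{1}}$ constrains only finitely many coordinates, say those in a finite set $F\subset\omega_{1}$; taking any $y\in U$ and replacing its coordinates off $F$ by $0$ yields a point of $\Sigma$ (in fact of the $\sigma$-product) lying in $U$. Hence $\Sigma$ is dense in $D^{\omega_{1}}$. Finally, since $D=\{0,1\}$ is a non-trivial Hausdorff space, Proposition~\ref{p6} applies verbatim to the family $\{G_{\alpha}=D:\alpha<\omega_{1}\}$ and shows that $\Sigma$ is not separable. Combining these, $\Sigma$ is a dense subgroup of $D^{\omega_{1}}$ that is not separable, so by definition $D^{\omega_{1}}$ is not dense-subgroup-separable.

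I do not expect a genuine obstacle here: the argument is almost entirely bookkeeping once Proposition~\ref{p6} is in hand. The only substantive observation — and the step most worth stating explicitly — is that the non-separable dense $\Sigma$-product produced by Proposition~\ref{p6} is in this case closed under the group operation, so it serves as the required dense subgroup rather than just a dense subspace.
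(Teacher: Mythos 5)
Your proposal is correct and follows essentially the same route as the paper: the corollary is deduced from Proposition~\ref{p6} by observing that the $\Sigma$-product of copies of $D$ with basic point the neutral element is a dense, non-separable subgroup of $D^{\omega_{1}}$. The only detail the paper leaves implicit is the one you rightly spell out, namely that this $\Sigma$-product is closed under the group operations since supports of sums lie in unions of supports.
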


\begin{corollary}
Any Hausdorff space $X$ which contains a dense subspace that is homeomorphic to $\Sigma D^{\omega_{1}}$ is not dense-separable, where $D=\{0, 1\}$ is the Boolean group with the discrete topology.
\end{corollary}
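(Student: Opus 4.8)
The plan is to produce, directly, a dense subset of $X$ that fails to be separable; since $\delta(X)=\sup\{d(Z):Z\text{ is dense in }X\}$, exhibiting one such subset is exactly what is needed to conclude that $X$ is not dense-separable.

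First I would apply Proposition~\ref{p6} to the constant family $G_{\alpha}=D$ for every $\alpha<\omega_{1}$. Since $D=\{0,1\}$ carries the discrete topology and has two distinct points, it is a non-trivial Hausdorff space, so the hypotheses of Proposition~\ref{p6} are met, and we conclude that the $\Sigma$-product $\Sigma D^{\omega_{1}}$ is not separable.

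Next, let $Y\subset X$ denote the given dense subspace that is homeomorphic to $\Sigma D^{\omega_{1}}$. Because separability is a topological property preserved under homeomorphism, $Y$ is not separable either, that is, $d(Y)>\omega$. As $Y$ is dense in $X$, it follows that $\delta(X)\geq d(Y)>\omega$, and therefore $X$ is not dense-separable.

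The argument is immediate once Proposition~\ref{p6} is in hand, and there is no genuine obstacle: the essential observation is simply that the prescribed dense subspace itself plays the role of the required non-separable dense subset, so the corollary is a direct consequence of the non-separability of $\Sigma D^{\omega_{1}}$ established in Proposition~\ref{p6} (indeed, the same device underlies the preceding corollary about $D^{\omega_{1}}$).
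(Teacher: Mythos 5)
Your proof is correct and follows exactly the route the paper intends: the corollary is stated as an immediate consequence of Proposition~\ref{p6}, and your argument (the dense subspace homeomorphic to $\Sigma D^{\omega_{1}}$ is itself the required non-separable dense subset) is precisely that deduction, with the preservation of separability under homeomorphism made explicit.
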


\begin{corollary}
For any a family $\{G_{\alpha}: \alpha<\kappa\}$ of non-trivial separable metrizable spaces, the product space $\Sigma_{\alpha\in\kappa}G_{\alpha}$ is dense-separable iff $\kappa<\omega_{1}$ iff $\Pi_{\alpha\in\kappa}G_{\alpha}$ is metrizable.
\end{corollary}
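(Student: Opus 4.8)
The plan is to abbreviate the three conditions as (a) ``$\Sigma_{\alpha\in\kappa}G_{\alpha}$ is dense-separable'', (b) ``$\kappa<\omega_{1}$'', and (c) ``$\Pi_{\alpha\in\kappa}G_{\alpha}$ is metrizable'', to fix a base point $b=(b_{\alpha})$ for the $\Sigma$-product, and to prove (b)$\Leftrightarrow$(c) together with (a)$\Leftrightarrow$(b), splitting the latter according to whether $\kappa$ is countable. The equivalence (b)$\Leftrightarrow$(c) is just the product metrization theorem \cite{E1989}: a product of metrizable spaces is metrizable if and only if all but countably many factors are one-point spaces. Since every $G_{\alpha}$ is nontrivial, this says precisely that $\Pi_{\alpha\in\kappa}G_{\alpha}$ is metrizable iff the index set is countable, i.e.\ iff $\kappa<\omega_{1}$. (If one prefers a direct argument for $\neg$(b)$\Rightarrow\neg$(c): when $\kappa\geq\omega_{1}$, any countable family of basic neighborhoods of a point constrains only countably many coordinates, so some coordinate $\beta$ is left free; as $G_{\beta}$ is Hausdorff with at least two points it has a proper open neighborhood of its $\beta$-coordinate, producing a neighborhood of the point contained in none of the given ones, whence the product is not first-countable and so not metrizable.)

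For (b)$\Rightarrow$(a), I would observe that when $\kappa<\omega_{1}$ every element of $\Pi_{\alpha\in\kappa}G_{\alpha}$ differs from $b$ in at most $\kappa\leq\omega$ coordinates, so the $\Sigma$-product is the whole product: $\Sigma_{\alpha\in\kappa}G_{\alpha}=\Pi_{\alpha\in\kappa}G_{\alpha}$. This is a countable product of separable metrizable spaces, hence second countable, hence hereditarily separable, and therefore dense-separable, since each hereditarily separable space is dense-separable.

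For the remaining direction $\neg$(b)$\Rightarrow\neg$(a), I would suppose $\kappa\geq\omega_{1}$ and fix $S\subseteq\kappa$ with $|S|=\omega_{1}$. The coordinate projection $\pi_{S}\colon\Sigma_{\alpha\in\kappa}G_{\alpha}\to\Sigma_{\alpha\in S}G_{\alpha}$ is continuous and onto: any point of $\Sigma_{\alpha\in S}G_{\alpha}$, i.e.\ a point differing from $b\uhr S$ in countably many coordinates, extends to a point of $\Sigma_{\alpha\in\kappa}G_{\alpha}$ by filling in the base-point coordinates on $\kappa\setminus S$. The target $\Sigma_{\alpha\in S}G_{\alpha}$ is a copy of the $\Sigma$-product over $\omega_{1}$, which is not separable by Proposition~\ref{p6}. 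Since a continuous image of a separable space is separable, $\Sigma_{\alpha\in\kappa}G_{\alpha}$ cannot be separable either; and as every dense-separable space is separable (the space being dense in itself), it is not dense-separable.

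The genuinely nontrivial input is the non-separability of the $\Sigma$-product for uncountable index sets. Proposition~\ref{p6} supplies this only for an $\omega_{1}$-indexed family, so the point requiring care is the reduction of an arbitrary uncountable $\kappa$ to that case via the continuous onto projection $\pi_{S}$, together with the passage from non-separability to non-dense-separability; the remaining implications are the product metrization theorem and the standard facts that a countable product of second countable spaces is second countable and that second countable spaces are hereditarily separable.
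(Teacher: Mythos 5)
Your proof is correct and follows essentially the route the paper intends: the corollary is stated as an immediate consequence of Proposition~\ref{p6} (non-separability of the $\Sigma$-product over $\omega_{1}$ many nontrivial factors, which you transfer to arbitrary uncountable $\kappa$ via the continuous onto projection $\pi_{S}$), combined with the standard facts that for countable $\kappa$ the $\Sigma$-product coincides with the full product, which is second countable, hence hereditarily separable, hence dense-separable, and that metrizability of the product is equivalent to countability of the index set by the product metrization theorem. No gaps; the only addition beyond the paper's implicit argument is your explicit reduction from $\kappa\geq\omega_{1}$ to the $\omega_{1}$-indexed case, which one could equally obtain by rerunning the proof of Proposition~\ref{p6} verbatim for uncountable $\kappa$.
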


In \cite{WS1976}, the authors gave a dense-separable Hausdorff (non-regular) space $X$ such that $X\times X$ is not dense-separable. However, the following two questions are interesting.

\begin{question}\label{q1}
Let $X$ and $Y$ be two dense-separable regular spaces. Is the product $X\times Y$ dense-separable? What if $X$ and $Y$ are hereditarily separable?
\end{question}

\begin{question}\label{q7}
Let $X$ and $Y$ be two dense-separable Hausdorff topological groups. Is the product $X\times Y$ dense-separable? What if $X$ and $Y$ are hereditarily separable?
\end{question}

A space $X$ is said to be {\it right} (resp., {\it left}) {\it separated} if there exist a well-ordering $\prec$ of $X$ and neighborhoods $U_{x}$ for each $x\in X$ such that $x\prec y$ (resp., $y\prec x$) implies $y\not\in U_{x}$. Next we give some partial answers to Question~\ref{q1}.

\begin{proposition}\label{p4}
Let $X$ be a separable regular space. Then $X$ is dense-separable iff it has no uncountable left separated dense subspace.
\end{proposition}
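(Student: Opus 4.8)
The plan is to prove both implications by contraposition, with a transfinite construction carrying the main technical weight. Throughout I fix a countable dense subset $\{d_{n}:n\in\omega\}$ of the separable space $X$ and use the notion of left separation exactly as defined above, namely a well-ordering $\prec$ together with neighbourhoods $U_{x}$ such that $y\prec x$ forces $y\notin U_{x}$; equivalently, every initial segment is closed.

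To show that the absence of an uncountable left separated dense subspace forces dense-separability, I would assume $X$ is not dense-separable and produce such a subspace. By definition there is a dense subspace $Y$ with $d(Y)>\omega$. I would then build a left separated family $\{x_{\alpha}:\alpha<\omega_{1}\}$ by recursion: having chosen the countable set $\{x_{\beta}:\beta<\alpha\}$, its closure is not all of $Y$ because $Y$ is not separable, so there is an open set $W_{\alpha}$ of $X$ meeting $Y$ but disjoint from $\{x_{\beta}:\beta<\alpha\}$; pick $x_{\alpha}\in W_{\alpha}\cap Y$ and take $U_{x_{\alpha}}=W_{\alpha}$ as the witnessing neighbourhood, so that $W_{\alpha}\cap\{x_{\beta}:\beta<\alpha\}=\emptyset$ gives left separation of $L=\{x_{\alpha}:\alpha<\omega_{1}\}$. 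The extra requirement is that $L$ be dense in $X$, and here the set $\{d_{n}\}$ must be folded into the construction, placing the $d_{n}$ cofinally so that every basic open set of $X$ is eventually met while checking that adjoining them does not violate left separation.

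For the converse, suppose $X$ is dense-separable and, toward a contradiction, that $Y$ is an uncountable left separated dense subspace with witnessing $\prec$ and neighbourhoods $U_{y}$. Since $Y$ is dense, dense-separability yields a countable dense $D\subseteq Y$. For each $y$ the set $U_{y}$ meets $D$, and as $U_{y}$ omits every $\prec$-predecessor of $y$, any chosen point of $U_{y}\cap D$ is $\succeq y$; this gives a map of $Y$ into $D$ whose fibre over each $d$ lies in the initial segment determined by $d$. The aim is to force $Y$ to be countable. Concretely I would pass to the initial segment of $Y$ of order type $\omega_{1}$, which is again left separated, and note that any countable subset of it is contained in a proper closed initial segment, so that segment cannot be separable.

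The hard part is reconciling density with left separation. In the forward implication an initial segment of order type $\omega_{1}$ is closed but need not be dense, so its non-separability does not transfer directly to $Y$; the argument must instead show that the dense subspace $Y$ itself fails to be separable, which is where the regularity of $X$ and the interaction between $\{d_{n}\}$ and the well-ordering must be exploited. Dually, in the construction the two demands---that later points avoid all earlier ones, and that the whole family accumulate densely---pull in opposite directions, and the delicate point is to interleave the countable dense set so that both hold at once. I expect this tension to be the principal obstacle, and the verification that the constructed $L$ is genuinely dense in $X$ to be the most technical step.
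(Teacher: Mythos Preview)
Your construction in the contrapositive direction (building an uncountable left-separated dense subset of a non-separable dense $Y\subseteq X$) runs into exactly the tension you identify, and the interleaving of the $d_n$ you propose does not work as stated: the $d_n$ lie in $X$, not necessarily in $Y$, so you cannot simply adjoin them to $L\subseteq Y$; and even if you replace each $d_n$ by a nearby point of $Y$, nothing prevents that point from already lying in $\overline{\{x_\beta:\beta<\alpha\}}$, which would destroy left separation. The paper sidesteps this with a single move: rather than stopping the recursion at $\omega_1$, run it to a \emph{maximal} left-separated subset $L=\{x_\alpha:\alpha<\gamma\}$ of $Y$ (the recursion terminates since $w(X)\le\mathfrak c$). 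Maximality gives density in $Y$ for free --- any $y\in Y\setminus\overline L$ could be appended as $x_\gamma$ --- and since $Y$ is dense in $X$, so is $L$. Finally $L$ must be uncountable, for otherwise the dense set $L\subseteq Y$ would witness separability of $Y$. No interleaving is needed, and the ``two demands pulling in opposite directions'' disappear.

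For the other direction the paper simply asserts in one line that an uncountable left-separated subspace is never separable. Your caution here is in fact warranted: the assertion is immediate when the well-order has uncountable cofinality (any countable subset then lies in a proper closed initial segment), and this covers the order-type-$\omega_1$ situation used in the subsequent theorem; but a left-separated space of order type $\omega_1+\omega$ can be separable --- an Isbell--Mr\'owka $\Psi$-space over an uncountable almost disjoint family is regular, separable, dense-separable, and left-separated in that order type. So the forward implication as literally stated is delicate, and the clean argument you were hoping to find via ``the initial $\omega_1$ segment'' really does need an extra hypothesis on the order type. The paper's applications, however, only invoke the construction direction together with left-separated sets of type $\omega_1$, where your bounded-initial-segment idea goes through without difficulty.
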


\begin{proof}
Since $X$ is a separable regular space, it follows from \cite[Theorem 3.3]{H1984} that $w(X)\leq\mathfrak{c}$. An uncountable left separated dense subspace is not separable, so dense-separable space has no uncountable left separated dense subspace. Suppose that $X$ is not dense-separable, and assume that $Y$ is a non-separable dense subspace of $X$. Since $w(X)\leq\mathfrak{c}$, we can define a maximal subset of points $\{x_{\alpha}: \alpha<\mathfrak{c}\}$ in $Y$ such that $x_{\alpha}\not\in\overline{\{x_{\beta}: \beta<\alpha\}}$ for each $\alpha<\mathfrak{c}$. Since $\{x_{\alpha}: \alpha<\mathfrak{c}\}$ is maximal, it is easy to see that $\{x_{\alpha}: \alpha<\mathfrak{c}\}$ is dense in $Y$, which is a contradiction.
\end{proof}

\begin{theorem}\label{t11}
Let $X$ be a dense-separable regular space and $Y$ be a cosmic space. Then $X\times Y$ is dense-separable.
\end{theorem}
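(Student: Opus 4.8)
The plan is to fix an arbitrary dense subset $D\subseteq X\times Y$ and manufacture a countable dense subset of $D$; this is exactly what dense-separability of $X\times Y$ asks for. I fix once and for all a countable network $\mathcal N=\{N_k:k\in\omega\}$ of $Y$, which exists because $Y$ is cosmic, and I use freely that every cosmic space is hereditarily separable (since $d(Z)\le nw(Z)\le nw(Y)=\omega$ for each $Z\subseteq Y$), so that every subspace of $Y$ is separable. Let $\pi_X\colon X\times Y\to X$ be the projection.

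The engine is the behaviour of $D$ on open ``slabs''. For a nonempty open $V\subseteq Y$ the set $X\times V$ is open, so $D\cap(X\times V)$ is dense in $X\times V$ and hence $\pi_X\big(D\cap(X\times V)\big)$ is dense in $X$; being a dense subset of the dense-separable space $X$, it is separable. Writing $X_k:=\pi_X\big(D\cap(X\times N_k)\big)$ and using the network identity $V=\bigcup\{N_k:N_k\subseteq V\}$, one obtains $\pi_X\big(D\cap(X\times V)\big)=\bigcup_{\{k:\,N_k\subseteq V\}}X_k$. Thus for \emph{every} nonempty open $V$ the countable union $\bigcup_{N_k\subseteq V}X_k$ is dense in $X$, which is the precise form in which the countable network of $Y$ will be played against the dense-separability of $X$.

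For the construction itself I would, for each $k$, set $G_k:=\mathrm{int}_X\overline{X_k}$. Then $G_k$ is open, hence dense-separable by Proposition~\ref{pp2}, and $X_k\cap G_k$ is a dense subset of $G_k$; consequently $X_k\cap G_k$ is separable, so I may choose a countable $C_k\subseteq X_k\cap G_k$ that is dense in $G_k$. For each $c\in C_k$ pick $y\in N_k$ with $(c,y)\in D$, collect these into a countable $S_k\subseteq D\cap(X\times N_k)$, and put $S:=\bigcup_k S_k\subseteq D$, which is countable. The claim ``$S$ is dense'' then reduces to: given a nonempty open rectangle $U\times V$ meeting $D$, produce a $k$ with $N_k\subseteq V$ and $U\cap G_k\neq\emptyset$ — for then $C_k$, dense in $G_k$, meets $U$, and the associated point of $S_k$ sits in $U\times V$ since its second coordinate lies in $N_k\subseteq V$.

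The hard part is exactly this last reduction, and it is where the non-openness of network elements bites: an individual $X_k$ need not be dense in $X$ (indeed need not be separable), and a witness $(x,y)\in D\cap(U\times V)$ only gives $x\in U\cap X_k$ for some $N_k\subseteq V$, not $U\cap G_k\neq\emptyset$. My proposed way around this is to feed the open-slab density of the second paragraph back in: after shrinking $V$ via regularity of $Y$ and $U$ via regularity of $X$, one works inside $\bigcup_{N_k\subseteq V}X_k$, which is dense in the open set $U$, and tries to show that at least one closed set $\overline{X_k}$ (with $N_k\subseteq V$) is somewhere dense in $U$, i.e. has interior meeting $U$, thereby supplying the required $k$. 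Making this ``some $\overline{X_k}$ is somewhere dense in $U$'' step valid for an \emph{arbitrary} dense-separable regular $X$, without smuggling in a Baire-category hypothesis, is the genuine crux and I expect it to be the main obstacle. A cleaner alternative worth pursuing in parallel is to realise the cosmic space $Y$ as an image of a separable metrizable space and to reduce to the second-countable case — where the slab argument applied to a countable base of the metrizable factor directly yields dense-separability of the product — transporting the conclusion back along an almost-open map via Proposition~\ref{p7}; here the difficulty migrates into arranging the map to be almost open.
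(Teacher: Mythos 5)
Your construction stalls exactly where you predict, and the obstacle is fatal rather than technical: the crux claim --- that for every open rectangle $U\times V$ meeting $D$ there is some $k$ with $N_k\subseteq V$ and $U\cap G_k\neq\emptyset$ --- is false, and in fact your construction can output a non-dense set. Take $X=\mathbb{R}$, $Y=\mathbb{Q}$, and the countable network $\mathcal{N}=\{\{q\}:q\in\mathbb{Q}\}$ of singletons (a perfectly legitimate network of the cosmic space $\mathbb{Q}$). Enumerate all pairs $(U_i,V_j)$ of rational intervals, assign to each pair a distinct rational $q_{ij}\in V_j$, and let $D$ consist of one point $(x_{ij},q_{ij})$ with $x_{ij}\in U_i$ for each pair. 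Then $D$ is dense in $X\times Y$, but every slice of $D$ over a point of $\mathbb{Q}$ is at most a singleton, so every $X_k$ is nowhere dense in $\mathbb{R}$, every $G_k=\mathrm{int}_X\overline{X_k}$ is empty, every $C_k$ is empty, and your $S$ is empty --- certainly not dense in the infinite set $D$. (The theorem holds here trivially since $D$ is countable, but your scheme cannot see that.) The underlying point is Baire-categorical: $\bigcup_{\{k:\,N_k\subseteq V\}}X_k$ can be dense in $X$ with every individual $X_k$ nowhere dense, and nothing about dense-separable regular spaces (which need not be Baire) excludes this. Your scheme does work when the $N_k$ can be taken open, i.e.\ when $Y$ is second countable --- which shows that the gap you have isolated is precisely the essential difficulty of the theorem, cosmic versus second countable. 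The fallback you sketch (realizing $Y$ as an image of a separable metrizable space and transporting along Proposition~\ref{p7}) also does not close it: the map goes the wrong way, since Proposition~\ref{p7} pushes dense-separability \emph{forward} along almost open maps, so you would need $X\times Y$ to be an almost open image of a space already known to be dense-separable, which is what is to be proved.

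The paper's proof avoids direct construction entirely and argues by contradiction through Proposition~\ref{p4}: since $X\times Y$ is separable and regular, failure of dense-separability produces an uncountable left separated \emph{dense} subspace $\{(x_\alpha,y_\alpha):\alpha<\omega_1\}$ with rectangular separating neighborhoods $V_\alpha\times O_\alpha$. Writing each $O_\alpha$ as a union of network elements and applying a pigeonhole argument, one gets a single network element $W_m$ and an uncountable $I\subseteq\omega_1$ with $y_\alpha\in W_m\subseteq O_\alpha$ for all $\alpha\in I$; then $\{x_\alpha:\alpha\in I\}$ is uncountable and left separated in $X$ (witnessed by the $V_\beta$), and after extending it to a left separated dense subspace of $X$ one contradicts Proposition~\ref{p4} applied to $X$. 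This is the idea your proposal is missing: the countable network is used not to build a countable dense subset of $D$, but to transfer an uncountable left separated witness of non-dense-separability from the product down to the factor $X$, where dense-separability forbids it.
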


\begin{proof}
Let $\{W_{n}: n\in\mathbb{N}\}$ be a countable network of $Y$. Assume that $X\times Y$ is not dense-separable. By Proposition~\ref{p4}, suppose that $\{(x_{\alpha}, y_{\alpha}): \alpha<\omega_{1}\}$ is a left separated dense subspace of $X\times Y$, so there exist separating neighborhoods $\{U_{\alpha}: \alpha<\omega_{1}\}$ such that $(x_{\beta}, y_{\beta})\in U_{\beta}$ for each $\beta\in\omega_{1}$, but $(x_{\alpha}, y_{\alpha})\not\in U_{\beta}$ for any $\alpha<\beta$. Without loss of generality, we may assume that each $U_{\alpha}$ has the form $V_{\alpha}\times O_{\alpha}$, where each $V_{\alpha}$ and $O_{\alpha}$ are open in $X$ and $Y$ respectively. For each $\alpha\in\omega_{1}$, there exists a subset $N_{\alpha}\subset\mathbb{N}$ such that $O_{\alpha}=\bigcup_{i\in N_{\alpha}}W_{i}$. However, $\bigcup_{\alpha\in\omega_{1}}N_{\alpha}$ is countable, hence there exist an uncountable set $I\subset\omega_{1}$ and $m\in \bigcup_{\alpha\in\omega_{1}}N_{\alpha}$ such that $(x_{\alpha}, y_{\alpha})\in V_{\alpha}\times W_{m}$ and $m\in N_{\alpha}$ for each $\alpha\in I$. Clearly, $y_{\alpha}\in W_{m}$ for each $\alpha\in I$. Take any $\alpha, \beta\in I$ with $\alpha<\beta$. Then $x_{\beta}\in V_{\beta}$ and $x_{\alpha}\not\in V_{\beta}$; otherwise, we have $(x_{\alpha}, y_{\alpha})\in V_{\beta}\times W_{m}\subset V_{\beta}\times O_{\beta}=U_{\beta}$, which shows that $\{x_{\alpha}: \alpha\in I\}$ is an uncountable left separated subspace. If $\{x_{\alpha}: \alpha\in I\}$ is dense in $X$, then it follows from Proposition~
\ref{p4} that $X$ is not dense-separable, which is a contradiction. Assume that $\{x_{\alpha}: \alpha\in I\}$ is not dense in $X$. Since $w(X)\leq \mathfrak{c}$, there exists a maximal subset $B=\{z_{\delta}: \delta<\gamma\}$ of $X$ such that $\gamma\leq\mathfrak{c}$ and $z_{\alpha}\not\in \overline{A\cup\{z_{\delta}: \delta<\alpha\}}$ for any $\alpha<\gamma$. Then $A\cup B$ is left separated subspace of $X$ and dense in $X$. From Proposition~\ref{p4} again, it follows that $X$ is not dense-separable, which is a contradiction.
\end{proof}

\begin{corollary}
Let $\{X_{n}: n\in\mathbb{N}\}$ be a sequence of cosmic spaces. Then $\Pi_{i\in B}X_{i}$ is dense-separable.
\end{corollary}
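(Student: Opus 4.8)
The plan is to reduce the statement to the single nontrivial input, Theorem~\ref{t11}, by exploiting two standard features of cosmic spaces. First I record that a cosmic space $X$ is itself dense-separable: a countable network is inherited by every subspace (intersect each network member with the subspace), and a space with a countable network is separable (choose one point from each nonempty network member). Hence $X$ is hereditarily separable, and in particular dense-separable; moreover $X$ is regular by definition. Thus each $X_{n}$ is a dense-separable regular space.

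Second, I record that a countable product of cosmic spaces is again cosmic. If $\mathcal{N}_{n}$ is a countable network for $X_{n}$, then the family of cylinders $\bigcap_{n\in F}\pi_{n}^{-1}(P_{n})$, taken over all finite $F\subseteq\mathbb{N}$ and all choices $P_{n}\in\mathcal{N}_{n}$ for $n\in F$, is countable (countably many finite $F$, each with countably many selections) and forms a network for the product; since a countable product of regular spaces is regular, the product is cosmic.

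With these two observations in hand the corollary follows quickly. Interpreting the index set as $\mathbb{N}$ (a proper subset $B\subseteq\mathbb{N}$ only yields a subfamily of cosmic factors and is handled identically), write $\Pi_{n\in\mathbb{N}}X_{n}=X_{0}\times\big(\Pi_{n\geq 1}X_{n}\big)$. The factor $X_{0}$ is a dense-separable regular space by the first observation, and the tail $\Pi_{n\geq 1}X_{n}$ is a countable product of cosmic spaces, hence cosmic by the second observation. Applying Theorem~\ref{t11} with $X=X_{0}$ and $Y=\Pi_{n\geq 1}X_{n}$ then shows that $\Pi_{n\in\mathbb{N}}X_{n}$ is dense-separable.

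The only real content is the second observation, that cosmicness survives countable products, and this is the routine network-multiplication argument sketched above; once it is in place, Theorem~\ref{t11} does all the work, so I do not expect a genuine obstacle. One could even bypass the theorem entirely by noting that the whole product is itself cosmic, hence hereditarily separable, hence dense-separable; the route through Theorem~\ref{t11} is presented to match the surrounding context.
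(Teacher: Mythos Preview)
Your proof is correct and matches the paper's intended approach: the corollary is stated without proof immediately after Theorem~\ref{t11}, and your splitting of the product as $X_{0}\times(\text{cosmic tail})$ is precisely the one-step application of that theorem which the placement suggests. Your parenthetical bypass---the full product is itself cosmic, hence hereditarily separable, hence dense-separable---is also valid and in fact simpler than invoking Theorem~\ref{t11} at all.
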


\begin{theorem}\label{t44}
Suppose that $\{X_{n}: n\in\mathbb{N}\}$ is a sequence of spaces  such that $\Pi_{i\in B}X_{i}$ is dense-separable for any finite subset $B$ of $\mathbb{N}$. Then $\Pi_{i\in\mathbb{N}}X_{i}$ is dense-separable.
\end{theorem}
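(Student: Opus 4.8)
The plan is to take an arbitrary dense subset $D$ of $X:=\prod_{n\in\mathbb{N}}X_{n}$ and manufacture from it a countable subset $C\subseteq D$ that is dense in $X$. Since then $\overline{C}=X\supseteq D$, the set $C$ is also dense in $D$, so $D$ is separable; as $D$ was an arbitrary dense subset, this shows $X$ is dense-separable. Thus the whole argument reduces to a construction of $C$.

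First I would pass to the finite subproducts. For each $n\in\mathbb{N}$ let $X_{[n]}=\prod_{i\le n}X_{i}$ and let $\pi_{n}\colon X\to X_{[n]}$ be the canonical projection. Being a continuous surjection, $\pi_{n}$ carries the dense set $D$ onto a dense subset $\pi_{n}(D)$ of $X_{[n]}$. By hypothesis $X_{[n]}$ is dense-separable (it is a finite subproduct), so its dense subspace $\pi_{n}(D)$ is separable; I fix a countable $E_{n}\subseteq\pi_{n}(D)$ that is dense in $\pi_{n}(D)$, and hence dense in $X_{[n]}$. The point of insisting $E_{n}\subseteq\pi_{n}(D)$ rather than merely $E_{n}\subseteq X_{[n]}$ is that every $s\in E_{n}$ then admits a preimage lying in $D$: choose $d_{n}(s)\in D$ with $\pi_{n}(d_{n}(s))=s$, put $C_{n}=\{d_{n}(s):s\in E_{n}\}$, and set $C=\bigcup_{n}C_{n}$. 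Then $C\subseteq D$ is countable.

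The crux is to verify that $C$ is dense in $X$, and here I would exploit that basic open boxes in the product depend on only finitely many coordinates. Let $U=\prod_{i}U_{i}$ be a basic open set with $U_{i}=X_{i}$ for all $i>n$; then $U=\pi_{n}^{-1}(U')$, where $U'=\prod_{i\le n}U_{i}$ is open in $X_{[n]}$. Since $E_{n}$ is dense in $X_{[n]}$ there is some $s\in E_{n}\cap U'$, and the corresponding $d_{n}(s)\in C$ satisfies $\pi_{n}(d_{n}(s))=s\in U'$, so $d_{n}(s)\in U$. Hence $C$ meets every basic open set, i.e. $C$ is dense in $X$, which completes the construction.

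I expect the argument to be essentially routine once the lifting is set up correctly; the only genuinely load-bearing step is the last one, where the finite-support nature of product basic open sets lets the global density of $C$ be tested block-by-block against the density of the finite lifts $E_{n}$. The lone technical subtlety worth tracking is the requirement $E_{n}\subseteq\pi_{n}(D)$, which guarantees that the chosen preimages $d_{n}(s)$ can be taken inside $D$, so that the resulting $C$ is a subset of $D$ and not merely of $X$.
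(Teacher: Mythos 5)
Your proof is correct and takes essentially the same approach as the paper's: both project the arbitrary dense set onto finite subproducts, use dense-separability of those subproducts to extract countable dense images, lift them back into the dense set, and verify density of the resulting countable union against basic open sets, which depend on only finitely many coordinates. The only cosmetic differences are that you index over initial segments $\{1,\dots,n\}$ while the paper ranges over all finite subsets of $\mathbb{N}$, and that you spell out the lifting of $E_{n}$ into $D$ explicitly where the paper compresses it into the choice of $Y_{B}\subseteq Y$ with $\pi_{B}(Y_{B})$ dense in $\pi_{B}(Y)$.
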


\begin{proof}
Take any dense subset $Y$ of $\Pi_{i\in\mathbb{N}}X_{i}$. Let $\mathcal{A}$ denote the collection of finite subsets of $\mathbb{N}$. For each $B\in\mathcal{A}$, it is obvious that $\pi_{B}(Y)$ is dense in $\Pi_{i\in B}X_{i}$, where $\pi_{B}$ is the natural projection from $\Pi_{i\in B}X_{i}$ to $\Pi_{i\in B}X_{i}$; hence take a countable subset $Y_{B}$ of $Y$ such that $\pi_{B}(Y_{B})$ is dense in $\pi_{B}(Y)$. Put $A=\bigcup_{A\in\mathcal{A}}Y_{B}$. Then $A$ is a countable dense subset of $Y$.
\end{proof}

From Proposition~\ref{p5}, it is obvious that we have the following proposition.

\begin{proposition}
Let $G$ be a semitopological group with a countable tightness. Then $G$ is dense-separable iff there exists a dense subgroup of $G$ is separable iff each dense subgroup of $G$ is separable.
\end{proposition}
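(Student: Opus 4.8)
The plan is to prove the cyclic chain of implications: dense-separability $\Rightarrow$ (each dense subgroup is separable) $\Rightarrow$ (some dense subgroup is separable) $\Rightarrow$ dense-separability. The first two implications are essentially formal, and the entire substance of the argument lies in closing the loop through Proposition~\ref{p5}.

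First I would observe that a dense-separable space is in particular separable, since the whole space is a dense subset of itself. Granting this, the implication ``dense-separable $\Rightarrow$ each dense subgroup separable'' is immediate: every dense subgroup of $G$ is a dense subset, so by the definition of dense-separability it must be separable. The implication ``each dense subgroup separable $\Rightarrow$ some dense subgroup separable'' is even cheaper, because $G$ is itself a dense subgroup of $G$; applying the hypothesis to $G$ shows that $G$ is separable, and $G$ then witnesses the existential statement.

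The one step requiring an actual idea is ``some dense subgroup separable $\Rightarrow$ dense-separable''. Here I would take a dense subgroup $H$ of $G$ that is separable and fix a countable subset $D\subseteq H$ dense in $H$. Since $H$ is dense in $G$, the closure of $D$ in $G$ contains $\overline{H}=G$, so $D$ is a countable dense subset of $G$; thus $G$ is separable. Now $G$ is separable and has countable tightness by hypothesis, so Proposition~\ref{p5} applies directly and yields that $G$ is dense-separable.

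The main (and only) obstacle is therefore already resolved by Proposition~\ref{p5}: once separability of the whole group has been extracted from any one of the three hypotheses, countable tightness upgrades it to dense-separability automatically. I expect no genuine difficulty here; the proof is a short bookkeeping exercise in chasing the definitions together with a single invocation of Proposition~\ref{p5}.
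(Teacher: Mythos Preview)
Your argument is correct and matches the paper's approach exactly: the paper does not even spell out a proof, stating only that the proposition is ``obvious'' from Proposition~\ref{p5}, which is precisely the single substantive step you identify and invoke. The cyclic chain of trivial implications you record is the intended unpacking of that remark.
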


Let $G$ be a topological group. Then it is well known that if $G$ is separable then the Ra\u{\i}kov completion of $G$ is separable, but not vice versa. However, if the Ra\u{\i}kov completion of $G$ is dense-separable, then $G$ is dense-separable, but not vice versa. Indeed, the topological group $\mathbb{T}^{\mathfrak{c}}$ is separable and not dense-separable by Proposition~\ref{p6}, where $\mathbb{T}$ is the unit circle with usual topology. Take any countable dense subset $X$ of $\mathbb{T}^{\mathfrak{c}}$, and let $H=\langle X\rangle$ be the subgroup generated by $X$. Then $H$ is countable and dense in $\mathbb{T}^{\mathfrak{c}}$, thus it is dense-separable. Therefore, we have the following question.

\begin{question}
Let $G$ be a dense-separable topological group. When is the Ra\u{\i}kov completion of $G$ dense-separable?
\end{question}

It is well-known that each compact hereditarily separable topological group is metrizable. Indeed, we can generalize ``hereditarily separable'' to ``dense-separable'' by the following Theorem~\ref{tt1}. First, we introduce the following concept.

We say that a space $X$ is {\it locally dense-separable} (resp., {\it locally hereditarily separable}) if, for each point $x\in X$, there exists an open neighborhood $U$ of $x$ in $X$ such that $U$ is dense-separable (resp., hereditarily separable).

\begin{theorem}\label{tt1}
For a locally compact topological group $G$, the following statements are equivalent:
\begin{enumerate}
\item $G$ is locally dense-separable;

\item $G$ is locally hereditarily separable;

\item $G$ is metrizable.
\end{enumerate}
\end{theorem}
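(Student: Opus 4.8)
The plan is to prove the cycle $(3)\Rightarrow(2)\Rightarrow(1)\Rightarrow(3)$. The implication $(3)\Rightarrow(2)$ is routine: if $G$ is metrizable and locally compact, then each point $x$ has a compact neighborhood $U$, and being compact metrizable, $U$ is second countable, hence hereditarily separable; thus the interior of $U$ is an open hereditarily separable neighborhood of $x$. The implication $(2)\Rightarrow(1)$ is immediate from the observation (recorded earlier) that every hereditarily separable space is dense-separable. So all the content lies in $(1)\Rightarrow(3)$.

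For $(1)\Rightarrow(3)$ I would first reduce to a convenient open subgroup. Since metrizability of a topological group is equivalent to first countability, and first countability is decided at the identity, it suffices to metrize an open subgroup. Applying van Dantzig's theorem to the totally disconnected quotient $G/G_{0}$ (where $G_{0}$ is the identity component) yields a compact open subgroup of $G/G_{0}$, whose preimage $G'$ is an open, almost connected subgroup of $G$ with $G'/G_{0}$ compact. As $G_{0}$ is $\sigma$-compact and $G'=C_{0}G_{0}$ for a compact set $C_{0}$ projecting onto $G'/G_{0}$, the group $G'$ is $\sigma$-compact, hence Lindel\"of. Being open, $G'$ is locally dense-separable by Proposition~\ref{pp2}, and a short covering argument (take a countable subcover by dense-separable open sets $W_{n}$; for a dense $D$, each $D\cap W_{n}$ is a dense, hence separable, subset of $W_{n}$, and the union of countable dense subsets of the $D\cap W_{n}$ is a countable dense subset of $D$) shows that a Lindel\"of locally dense-separable space is dense-separable. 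Therefore $G'$ is dense-separable.

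Next I would invoke the structure theorem for almost connected locally compact groups: $G'$ is homeomorphic to $\mathbb{R}^{n}\times K$ for some $n$ and some compact (maximal compact) subgroup $K$. Since a factor of a dense-separable product is dense-separable, $K$ is a dense-separable compact group, and the argument reduces to the following. \emph{Main Claim: a dense-separable compact group $K$ is metrizable.} Here dense-separability gives $\delta(K)=\omega$, and by the Juh\'{a}sz--Shelah theorem \cite{JS1989} one has $\pi w(K)=\delta(K)=\omega$ for the compact space $K$. Now the group structure enters: in any topological group $\chi=\pi\chi$, because a local $\pi$-base $\{V_{\alpha}\}$ at $e$ (with chosen $v_{\alpha}\in V_{\alpha}$) produces a genuine base $\{v_{\alpha}^{-1}V_{\alpha}\}$ at $e$ via symmetric squares $WW\subseteq U$. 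Hence $\chi(K,e)\le\pi\chi(K,e)\le\pi w(K)=\omega$, so $K$ is first countable and therefore metrizable by Birkhoff--Kakutani. Consequently $K$ is second countable, $G'\cong\mathbb{R}^{n}\times K$ is metrizable, and $G$ is metrizable.

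I expect the Main Claim to be the crux, and the decisive subtlety is that dense-separability alone does \emph{not} force a compact \emph{space} to be metrizable: the split interval is compact, hereditarily separable (so dense-separable) and non-metrizable, with countable $\pi$-weight but uncountable weight. Thus the proof must deploy the group structure precisely at the passage from countable $\pi$-character to countable character, which is exactly where $\chi=\pi\chi$ is used. The remaining work is bookkeeping with the structure theory of locally compact groups (van Dantzig and the homeomorphism $G'\cong\mathbb{R}^{n}\times K$), whose role is to replace the non-compact $G$ by a genuine compact group on which the Juh\'{a}sz--Shelah identity $\pi w=\delta$ can be brought to bear.
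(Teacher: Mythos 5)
Your proof is correct, but it takes a substantially longer route than the paper's. The paper's entire argument for (1) $\Rightarrow$ (3) is: choose open neighborhoods $U$ and $W$ of $e$ with $\overline{U}$ compact and $W$ dense-separable; then $O=U\cap W$ is dense-separable by Proposition~\ref{pp2}, its closure $\overline{O}$ is a compact dense-separable \emph{space}, so the Juh\'{a}sz--Shelah theorem \cite{JS1989} gives $\pi(\overline{O})=\omega$, and metrizability then follows from \cite[Theorem 3.3.12 and Proposition 5.2.6]{AT2008} --- which is exactly your combination of $\chi=\pi\chi$ for topological groups and Birkhoff--Kakutani. The observation you missed is that Juh\'{a}sz--Shelah applies to arbitrary compact Hausdorff spaces, not only to compact groups, so there is no need to manufacture a genuine compact group before invoking it. Your entire structure-theoretic detour --- van Dantzig applied to $G/G_{0}$, $\sigma$-compactness of the almost connected open subgroup $G'$, the Lindel\"{o}f covering lemma, and the Mal'cev--Iwasawa homeomorphism $G'\cong\mathbb{R}^{n}\times K$ --- serves only to produce the compact group $K$, and all of it can be replaced by the one-line remark that $\overline{O}$ is compact and dense-separable (if $D$ is dense in $\overline{O}$, then $D\cap O$ is dense in the open set $O$, hence separable, and a countable dense subset of $D\cap O$ is dense in $D$). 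What your route buys: it is a clean reduction to the most symmetric possible object, and your covering lemma (Lindel\"{o}f plus locally dense-separable implies dense-separable) is a worthwhile fact in its own right that the paper never states. What it costs: dependence on two deep structure theorems where the paper needs none. Your closing diagnosis --- that the group structure is used precisely at the passage from countable $\pi$-character to countable character, and that the split interval shows this step cannot be dispensed with --- is exactly where the paper's proof also concentrates its group-theoretic content.
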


\begin{proof}
It suffices to prove (1) $\Rightarrow$ (3). Indeed, since $G$ it locally compact and locally dense-separable, there exist open neighborhoods $U$ and $W$ of $e$ in $G$  such that $\overline{U}$ is compact and $W$ is dense-separable. Then $O=W\cap U$ is dense-separable by Proposition~\ref{pp2}. Clearly, $\overline{O}$ is compact and dense-separable, then it follows from \cite{JS1989} that $\pi(\overline{O})=\omega$, thus $G$ is metrizable by
\cite[Theorem 3.3.12 and Proposition~5.2.6]{AT2008}.
\end{proof}

From the proof of Theorem~\ref{tt1}, we also have the following proposition.

\begin{proposition}
For a locally compact space $X$, if $X$ is locally dense-separable, then $\pi_{\chi}(X)=\omega$.
\end{proposition}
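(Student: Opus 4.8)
The plan is to localize the argument already used for Theorem~\ref{tt1}, replacing the neutral element by an arbitrary point and stopping once the relevant compact piece is shown to have countable $\pi$-weight. Fix any $x\in X$. Using local compactness, choose an open neighborhood $U$ of $x$ with $\overline{U}$ compact, and using local dense-separability, choose an open neighborhood $W$ of $x$ that is dense-separable. Put $O=U\cap W$. Then $O$ is an open neighborhood of $x$, it is dense-separable by Proposition~\ref{pp2}, and $\overline{O}\subseteq\overline{U}$ is compact.

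First I would check that $\overline{O}$ itself is dense-separable, a point that is used implicitly in the proof of Theorem~\ref{tt1}. This follows because $O$ is a dense open subset of $\overline{O}$: if $D$ is dense in $\overline{O}$, then $D\cap O$ is dense in the open set $O$, hence separable, so it contains a countable subset $C$ dense in $O$; since $O$ is dense in $\overline{O}$, the set $C$ is dense in $\overline{O}$ and therefore dense in $D$, witnessing that $D$ is separable. Thus $\overline{O}$ is a nonempty compact Hausdorff dense-separable space, so $\delta(\overline{O})=\omega$, and the theorem of Juh\'{a}sz and Shelah \cite{JS1989} gives $\pi(\overline{O})=\sup\{d(Y):Y\ \mbox{is dense in}\ \overline{O}\}=\delta(\overline{O})=\omega$.

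It remains to pass from the $\pi$-weight of the compact set $\overline{O}$ to the $\pi$-character of $X$ at $x$. Writing $\pi_{\chi}(x,Z)$ for the $\pi$-character at a point $x$ of a space $Z$, one always has $\pi_{\chi}(x,\overline{O})\leq\pi(\overline{O})=\omega$, since any $\pi$-base of $\overline{O}$ restricts to a local $\pi$-base at $x$. Moreover the $\pi$-character is computed locally: because $O$ is open and contains $x$ both in $X$ and in $\overline{O}$, trimming the members of a local $\pi$-base so as to lie inside $O$ shows $\pi_{\chi}(x,X)=\pi_{\chi}(x,O)=\pi_{\chi}(x,\overline{O})$. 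Hence $\pi_{\chi}(x,X)=\omega$, and since $x\in X$ was arbitrary, $\pi_{\chi}(X)=\omega$. The only genuinely new ingredient beyond the proof of Theorem~\ref{tt1} is this last transfer step, and I expect it to be the main thing requiring care, since one must verify that both open inclusions $O\subseteq X$ and $O\subseteq\overline{O}$ leave the local $\pi$-character unchanged; the substantive density estimate is already delivered by \cite{JS1989}.
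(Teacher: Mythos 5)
Your proof is correct and is essentially the paper's own argument: the paper proves this proposition by simply pointing to the proof of Theorem~\ref{tt1}, i.e., taking $O=U\cap W$ with $\overline{O}$ compact and dense-separable and applying the Juh\'{a}sz--Shelah result \cite{JS1989} to get $\pi(\overline{O})=\omega$, which is exactly your route. Your only additions are to make explicit two steps the paper treats as obvious --- that $\overline{O}$ inherits dense-separability from its dense open subset $O$, and that countable $\pi$-weight of $\overline{O}$ transfers to countable $\pi$-character of $X$ at $x$ --- and both verifications are sound.
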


\begin{corollary}
For a locally compact topological group $G$, the following statements are equivalent:
\begin{enumerate}
\item $G$ is dense-separable;

\item $G$ is hereditarily separable;

\item $G$ is separable metrizable.
\end{enumerate}
\end{corollary}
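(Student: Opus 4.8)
The plan is to establish the cycle of implications $(1)\Rightarrow(3)\Rightarrow(2)\Rightarrow(1)$, leaning on Theorem~\ref{tt1}, which already carries the genuinely topological content. Once that theorem is in hand, the corollary should reduce to routine passages between the global properties stated here and the corresponding local properties proved there, together with two standard facts from general topology.

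First I would prove $(1)\Rightarrow(3)$. If $G$ is dense-separable, then $G$ is in particular separable, since $G$ is a dense subset of itself and hence possesses a countable dense subset. Moreover, $G$ is trivially locally dense-separable: the whole group $G$ is an open neighborhood of every point and is dense-separable by hypothesis (alternatively, one may invoke Proposition~\ref{pp2} applied to any open neighborhood). Theorem~\ref{tt1} then yields that $G$ is metrizable, and combining this with separability shows that $G$ is separable metrizable.

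Next, for $(3)\Rightarrow(2)$, I would use the standard facts that a separable metrizable space is second countable and that second countable spaces are hereditarily separable; hence $G$ is hereditarily separable. Finally, $(2)\Rightarrow(1)$ is immediate from the observation recorded in the introduction that every hereditarily separable space is dense-separable, since each subspace, in particular each dense subspace, is then separable.

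The hard part is essentially absent, because Theorem~\ref{tt1} supplies the equivalence with metrizability at the level of the local notions, so no new compactness or $\pi$-weight argument is needed here. The only point requiring mild care is the step $(1)\Rightarrow(3)$, where one must separately extract \emph{both} separability (directly from dense-separability) and metrizability (through the trivial local reduction and Theorem~\ref{tt1}), and then merge the two conclusions to obtain the claimed separable metrizability rather than metrizability alone.
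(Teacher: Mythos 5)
Your proof is correct and follows exactly the route the paper intends: the corollary is stated there without proof as an immediate consequence of Theorem~\ref{tt1}, and your cycle $(1)\Rightarrow(3)\Rightarrow(2)\Rightarrow(1)$ — extracting separability directly from dense-separability, getting metrizability from the trivial local reduction plus Theorem~\ref{tt1}, and closing via second countability and the hereditarily-separable-implies-dense-separable observation — is precisely the intended argument, with the right care taken that metrizability alone would not suffice in $(1)\Rightarrow(3)$.
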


\begin{corollary}
The compact group $\mathbb{T}^{\mathfrak{c}}$ is not dense-separable.
\end{corollary}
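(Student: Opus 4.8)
The plan is to obtain the statement directly from Theorem~\ref{tt1}. I would first note that $\mathbb{T}^{\mathfrak{c}}$ is a compact, and therefore locally compact, Hausdorff topological group. Moreover, any dense-separable space is trivially locally dense-separable, since the whole space serves as a dense-separable open neighborhood of each of its points. Hence, were $\mathbb{T}^{\mathfrak{c}}$ dense-separable, it would be locally dense-separable, and Theorem~\ref{tt1} would force it to be metrizable. So it suffices to show that $\mathbb{T}^{\mathfrak{c}}$ is not metrizable.

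For the non-metrizability, I would compute the weight: since $\mathbb{T}$ is a nontrivial separable metrizable factor and the index set has cardinality $\mathfrak{c}$, one has $w(\mathbb{T}^{\mathfrak{c}})=\mathfrak{c}>\aleph_{0}$. A compact Hausdorff space is metrizable iff it is second countable, i.e. iff its weight is at most $\aleph_{0}$; thus $\mathbb{T}^{\mathfrak{c}}$ is not metrizable, and the contradiction with the previous paragraph shows that it is not dense-separable.

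As a cross-check independent of Theorem~\ref{tt1}, I would argue from Proposition~\ref{p6}. Since $\omega_{1}\le\mathfrak{c}$, I would write the index set as a disjoint union of a copy of $\omega_{1}$ and a set $J$ with $|J|=\mathfrak{c}$, so that $\mathbb{T}^{\mathfrak{c}}\cong\mathbb{T}^{\omega_{1}}\times\mathbb{T}^{J}$. Taking $G_{\alpha}=\mathbb{T}$ in Proposition~\ref{p6}, the $\Sigma$-product $\Sigma\Pi_{\alpha\in\omega_{1}}\mathbb{T}$ is non-separable, and being a $\Sigma$-product it is dense in $\mathbb{T}^{\omega_{1}}$; hence $\mathbb{T}^{\omega_{1}}$ is not dense-separable. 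Since a factor of a dense-separable product is itself dense-separable, $\mathbb{T}^{\mathfrak{c}}$ cannot be dense-separable either. There is no genuine obstacle here: the result is essentially a formality once Theorem~\ref{tt1} is in hand, and the only points requiring a word of justification are the density of the $\Sigma$-product in the full product and the routine cardinal identity $\mathfrak{c}=\omega_{1}+\mathfrak{c}$.
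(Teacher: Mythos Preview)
Your proposal is correct and matches the paper's approach: the corollary is stated without proof immediately after Theorem~\ref{tt1} and its ``separable metrizable'' corollary, so the intended argument is precisely your first paragraph (compact, not metrizable since $w(\mathbb{T}^{\mathfrak{c}})=\mathfrak{c}$, hence not dense-separable). Your cross-check via Proposition~\ref{p6} is also how the paper establishes the same fact earlier in Section~2 (in the paragraph preceding Question~2.22), so both of your routes coincide with arguments already present in the paper.
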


Since $\mathbb{T}^{\mathfrak{c}}$ is separable, there exists a countable subset $C$ of $\mathbb{T}^{\mathfrak{c}}$ such that $C$ is dense in $\mathbb{T}^{\mathfrak{c}}$, hence the subgroup $H$ generated by $C$ is a countable dense subgroup of $\mathbb{T}^{\mathfrak{c}}$. Clearly, $H$ is not metrizable; however, $H$ is hereditarily separable, thus dense-separable. Therefore, we have the following interesting two questions.

\begin{question}
If $G$ is a dense-separable subset (or subgroup) of $\mathbb{T}^{\mathfrak{c}}$, is $G$ hereditarily separable?
\end{question}

\begin{question}
How to characterize the subgroup $G$ of $\mathbb{T}^{\mathfrak{c}}$ such that $G$ is dense-separable?
\end{question}

A topological group $G$ is {\it feather} if it contains a non-empty compact set $K$ of countable character in $G$. Since each locally compact group is feather, we can generalize Theorem~\ref{tt1} as follows.

\begin{theorem}
If $G$ is feather dense-separable topological group, then $G$ is metrizable.
\end{theorem}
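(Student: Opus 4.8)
The plan is to reduce to the compact case via the structure theory of feathered groups and then to detect non-metrizability through the dense-separability hypothesis, paralleling the strategy of Theorem~\ref{tt1}.

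First I would invoke the standard characterization of feathered groups from \cite{AT2008}: a topological group is feathered if and only if it contains a compact subgroup $N$ such that the quotient $G/N$ is metrizable. Since $N$ is compact it is closed, so by Corollary~\ref{c1} the quotient $G/N$ is dense-separable; being also metrizable it is separable metrizable, hence second countable, so $\chi(\bar e, G/N)=\omega$. This reduces the problem to proving that the compact subgroup $N$ is metrizable. Indeed, once that is known, the estimate $\chi(e,G)\le\chi(e,N)\cdot\chi(\bar e, G/N)$ valid for an extension by a compact subgroup \cite{AT2008} gives $\chi(e,G)=\omega$, whence $G$ is first-countable and therefore metrizable by the Birkhoff--Kakutani theorem.

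The heart of the argument is thus to show that $N$ is metrizable. Here I would use that $N$ is compact, so by the Juh\'asz--Shelah theorem \cite{JS1989} its $\pi$-weight equals its strong density $\delta(N)$; consequently $N$ is metrizable as soon as it is dense-separable, exactly as in the proof of Theorem~\ref{tt1}. The difficulty is that dense-separability of $G$ does \emph{not} automatically restrict the subspace $N$: a compact subgroup may be nowhere dense, and a dense subset of $G$ may avoid $N$ altogether, so the non-separability of a dense subset of $N$ need not be visible inside $G$. To bypass this I would exploit the group structure rather than the bare subspace structure. Since $N$ is compact and $G/N$ is metrizable, the quotient map $\pi\colon G\to G/N$ is perfect, and I would use this to obtain, on a suitable open neighbourhood $U$ of a point of $N$, an almost open continuous map onto $N$ (a local product description $U\cong N\times O$ with $O$ open in $G/N$). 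As $U$ is open in the dense-separable group $G$, it is dense-separable by Proposition~\ref{pp2}, and then Proposition~\ref{p7} transfers dense-separability along this almost open map to $N$, forcing $\delta(N)=\omega$ and hence $\pi w(N)=\omega$; for the compact group $N$ this yields metrizability by \cite{AT2008}.

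The step I expect to be the main obstacle is precisely the construction of this local cross-section, equivalently the almost open map onto $N$: it is where compactness of $N$ together with metrizability (hence paracompactness) of $G/N$ must genuinely be used, and it is the crux on which the whole theorem rests. If that local product/almost open description proves too strong in full generality, the fallback is to establish $\pi\chi(G)=\omega$ directly from the second countability of $G/N$ and the dense-separability of $G$, and then conclude metrizability from the feathered hypothesis via \cite{AT2008}.
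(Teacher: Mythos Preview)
Your setup matches the paper's exactly: invoke the feathered structure to get a compact subgroup $K$ with $G/K$ metrizable, observe that $G/K$ is then separable metrizable, and reduce to showing $K$ is metrizable so that the three-space argument finishes. The divergence is in how to prove $K$ metrizable, and here your main line has a genuine gap. The local product description $U\cong K\times O$ (equivalently, a continuous local cross-section of $\pi\colon G\to G/K$) is \emph{not} available in general: quotient maps by compact subgroups are open and perfect, but they are fiber bundles only under strong hypotheses (e.g.\ $K$ a Lie group), and for arbitrary compact $K$ no local trivialization need exist. Without it there is no almost open continuous map from an open piece of $G$ onto $K$, so Proposition~\ref{p7} cannot be applied and you have no mechanism for pushing dense-separability down to $K$. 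Your fallback---getting $\pi\chi(G)=\omega$ directly---is also not substantiated: the Juh\'asz--Shelah equality $\pi w=\delta$ is a theorem about \emph{compact} spaces, and there is no analogous route from dense-separability to countable $\pi$-character for general (even feathered) groups.

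The paper sidesteps this entirely. Rather than trying to transfer dense-separability to $K$, it invokes an external result of Tkachuk \cite[Theorem~4]{T2010} to conclude that the whole group $G$ is cosmic (regular with a countable network). Once $G$ is cosmic, the compact subspace $K$ inherits a countable network and hence is second countable, so metrizable; then \cite[Corollary~1.5.21]{AT2008} gives metrizability of $G$. The point is that the dense-separability hypothesis is consumed at the level of $G$ (via the Lindel\"of~$\Sigma$ machinery available for feathered groups) rather than being pushed down to $K$, and this avoids the cross-section problem altogether.
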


\begin{proof}
Since $G$ is a feather topological group, it follows from \cite[Section 4.3]{AT2008} that there exists a compact subgroup $K$ such that the quotient space $G/K$ is metrizable. Moreover, $G$ is a cosmic space by \cite[Theorem 4]{T2010}, hence $K$ is metrizable. Therefore, $G$ is metrizable by \cite[Corollary 1.5.21]{AT2008}.
\end{proof}

Recall that a space $X$ is {\it initially $\omega_{1}$-compact} if every open cover of size $\leq \omega_{1}$ has
a finite subcover. In \cite{A1994}, the author proved that each initially $\omega_{1}$-compact hereditarily separable topological group is metrizable. If $X$ is a separable regular space, then $w(X)\leq\mathfrak{c}$, hence each separable regular initially $\omega_{1}$-compact space is compact. Therefore, we have corollary by Theorem~\ref{tt1}.

\begin{corollary}\label{cc}
Let $G$ be a locally initially $\omega_{1}$-compact topological group. If $G$ is locally dense-separable, then $G$ is metrizable.
\end{corollary}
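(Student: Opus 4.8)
The plan is to show that the two local hypotheses together force $G$ to be locally compact, at which point Theorem~\ref{tt1} applies verbatim and yields metrizability. Since $G$ is a topological group it is homogeneous and regular (indeed Tychonoff), so it suffices to produce a single open neighbourhood of the neutral element $e$ whose closure is compact; homogeneity then transports this to every point.

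First I would invoke the two assumptions at $e$: local dense-separability gives an open set $W \ni e$ that is dense-separable, and local initial $\omega_1$-compactness gives an open set $U \ni e$ that is initially $\omega_1$-compact (the argument is insensitive to whether one reads the definition as ``$U$ is initially $\omega_1$-compact'' or ``$\overline{U}$ is''). Using regularity of $G$ I would then choose an open $O$ with $e \in O$ and $\overline{O} \subseteq U \cap W$. This nesting is the delicate technical point: initial $\omega_1$-compactness is inherited by closed subspaces but not by open ones, so I must arrange that $\overline{O}$, which is closed in $G$ and hence closed in $U$, actually sits inside the initially $\omega_1$-compact neighbourhood rather than merely inside an open one.

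With such an $O$ in hand, three observations complete the reduction. First, $O$ is an open subspace of the dense-separable space $W$, so $O$ is dense-separable by Proposition~\ref{pp2}; in particular $O$ is separable, and any countable dense subset of $O$ is then dense in $\overline{O}$, so $\overline{O}$ is separable. Second, $\overline{O}$ is a closed subspace of the initially $\omega_1$-compact set $U$, hence $\overline{O}$ is itself initially $\omega_1$-compact. Third, $\overline{O}$ is regular, being a subspace of $G$. I would then apply the implication recorded immediately before the statement — each separable regular initially $\omega_1$-compact space is compact — to conclude that $\overline{O}$ is compact. Thus $e$ has a neighbourhood with compact closure, and by homogeneity $G$ is locally compact.

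Finally, $G$ is locally compact and locally dense-separable, so the equivalence (1)$\Leftrightarrow$(3) of Theorem~\ref{tt1} gives that $G$ is metrizable, finishing the proof. The only genuinely substantial ingredient is the cited implication ``separable regular initially $\omega_1$-compact $\Rightarrow$ compact'', which rests on the weight bound $w(X)\le\mathfrak{c}$ for separable regular spaces; were that not available as a black box it would be the hard part of the argument. Everything else is a routine assembly of the local hypotheses, turning on the use of regularity to nest $\overline{O}$ inside $U\cap W$ and on the closed-hereditary behaviour of initial $\omega_1$-compactness together with the open-hereditary behaviour of dense-separability.
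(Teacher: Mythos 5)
Your proposal is correct and follows essentially the same route as the paper: the paper's (very terse) argument likewise combines the two local hypotheses to get a separable, regular, initially $\omega_{1}$-compact neighbourhood, invokes the implication ``separable regular initially $\omega_{1}$-compact $\Rightarrow$ compact'' (via $w(X)\leq\mathfrak{c}$) to conclude local compactness, and then applies Theorem~\ref{tt1}. Your write-up merely makes explicit the details the paper leaves implicit, namely the use of regularity to nest $\overline{O}$ inside $U\cap W$, the closed-hereditary behaviour of initial $\omega_{1}$-compactness, and Proposition~\ref{pp2} for the open subspace.
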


However, the authors in \cite{JKS2009} force a first-countable, locally compact, initially $\omega_{1}$-compact space $X$ of size $\omega_{1}$ which is not compact; hence $\pi(X)<\delta(X)$ by Corollary~\ref{cc}.

\begin{question}\label{q4}
Under CH, if $X$ is an initially $\omega_{1}$-compact space, then does $\pi(X)=\delta(X)$ hold?
\end{question}

A semitopological group $G$ is {\it precompact} (resp., {\it $\omega$-narrow}) if for each open set $U$ of $G$ there exists a finite set (resp., countable set) $A\subset G$ such that $AU=UA=G$. Since each locally precompact group $G$ can embed as a dense topological subgroup to a locally compact group \cite{W1937}, it follows from Theorem~\ref{tt1} that we have the following corollary.

\begin{corollary}
Let $G$ be a locally precompact topological group. If the Ra\u{\i}kov completion of $G$ is locally dense-separable, then $G$ is metrizable.
\end{corollary}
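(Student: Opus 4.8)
The plan is to reduce the statement entirely to Theorem~\ref{tt1} by passing to the completion. Write $\varrho G$ for the Ra\u{\i}kov completion of $G$. Since $G$ is locally precompact, Weil's embedding theorem \cite{W1937}, exactly as recalled in the sentence immediately preceding the corollary, realizes $\varrho G$ as a locally compact topological group into which $G$ embeds as a dense topological subgroup. So the first step is simply to record that $\varrho G$ is a locally compact topological group, with $G$ a dense subgroup of it.

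Next I would feed $\varrho G$ into Theorem~\ref{tt1}. By hypothesis $\varrho G$ is locally dense-separable, and we have just observed that it is locally compact; hence the equivalence (1) $\Leftrightarrow$ (3) of Theorem~\ref{tt1} applies verbatim and gives that $\varrho G$ is metrizable. Finally, metrizability is hereditary, so the subspace $G$ of the metrizable space $\varrho G$ is itself metrizable, which is exactly the assertion of the corollary.

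As for the main obstacle, there is essentially no hard analytic content left to supply, since all the substantive work is already contained in Theorem~\ref{tt1} (and, behind it, in \cite{JS1989} on $\pi$-weight of compact spaces). The single point that deserves a word of care is the identification of the locally compact group furnished by Weil's theorem with the Ra\u{\i}kov completion of $G$, so that the hypothesis about local dense-separability of the completion is genuinely a hypothesis about this locally compact group; this identification is standard in the theory of (locally) precompact groups and is precisely what the sentence before the statement invokes, so I would cite it rather than reprove it.
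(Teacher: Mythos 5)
Your proposal is correct and follows exactly the paper's intended argument: the paper derives this corollary from the sentence preceding it (Weil's theorem identifying the Ra\u{\i}kov completion of a locally precompact group with a locally compact group containing $G$ densely) together with Theorem~\ref{tt1}, and then metrizability passes to the subspace $G$. Nothing is missing.
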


It is well known that under the assumption of set-theory there exists a pseudocompact, hereditarily separable, non-metrizable topological group $G$ (see \cite{DD2005} or \cite{HJ1976}), thus $G$ is dense-separable; then the the Ra\u{\i}kov completion of $G$ is not dense-separable by Theorem~\ref{tt1}. However, the following question is interesting.

\begin{question}\label{q5}
Which an abelian group $G$ admits a (countably compact or pseudocompact) dense-separable group topology?
\end{question}

By \cite[Theorems~2.1, 2.6 and 2.7]{DD2005}, the following theorem holds, which gives a partial answer to Question~\ref{q5}.

\begin{theorem}
Under $\nabla_{\kappa}$ (see \cite{DD2005}), the following conditions are equivalent for any abelian group $G$:
\begin{enumerate}
\item $G$ admits a separable ({\it resp., separable pseudocompact, separable countably compact}) group topology;

\item $G$ admits a dense-separable ({\it resp., dense-separable pseudocompact, dense-separable countably compact})  group topology;

\item $G$ admits a hereditarily separable ({\it resp., hereditarily separable pseudocompact, hereditarily separable countably compact}) group topology.
\end{enumerate}

\end{theorem}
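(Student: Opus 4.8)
The plan is to recognize that the genuinely new content here is merely the insertion of the dense-separability condition (2) into an equivalence between (1) and (3) that is already delivered by \cite[Theorems~2.1, 2.6 and 2.7]{DD2005}. For each of the three parametrized versions (plain, pseudocompact, and countably compact), those cited theorems assert, under $\nabla_{\kappa}$, that an abelian group admits a separable group topology of the prescribed type if and only if it admits a hereditarily separable group topology of that same type. I would therefore take the equivalence (1) $\Leftrightarrow$ (3) as given and reduce the whole argument to showing that (2) is logically sandwiched between (1) and (3). Concretely, I would prove the cycle (1) $\Rightarrow$ (3) $\Rightarrow$ (2) $\Rightarrow$ (1), in which only the first implication carries substantive weight.

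For (3) $\Rightarrow$ (2): if $\tau$ is a hereditarily separable (resp.\ hereditarily separable pseudocompact, hereditarily separable countably compact) group topology on $G$, then $(G,\tau)$ is in particular dense-separable, since every hereditarily separable space is dense-separable, as noted in the introduction. Pseudocompactness and countable compactness are properties of the single topology $\tau$ and are retained verbatim, so the very same $\tau$ witnesses (2). For (2) $\Rightarrow$ (1): if $\tau$ is a dense-separable (resp.\ dense-separable pseudocompact, dense-separable countably compact) group topology on $G$, then $(G,\tau)$ is separable because each dense-separable space is separable, and again the two extra properties are carried along unchanged, so the same $\tau$ witnesses (1).

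Combining these with the cited input gives the full cycle, where the first implication (1) $\Rightarrow$ (3) is the substantive set-theoretic step and the remaining two follow immediately from the chain of inclusions ``hereditarily separable $\subset$ dense-separable $\subset$ separable'' among classes of spaces. The only real obstacle is that hard direction (1) $\Rightarrow$ (3), which is precisely where $\nabla_{\kappa}$ enters and which I would simply cite from \cite{DD2005} rather than reprove; crucially, no additional work is needed to accommodate dense-separability, exactly because it is an intermediate property whose two defining inclusions are trivial and whose two variants (pseudocompact, countably compact) are preserved under the same topology.
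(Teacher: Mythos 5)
Your proposal is correct and matches the paper's approach exactly: the paper also deduces the theorem directly from \cite[Theorems~2.1, 2.6 and 2.7]{DD2005}, with the equivalence of (2) following from the trivial sandwich ``hereditarily separable $\Rightarrow$ dense-separable $\Rightarrow$ separable'' applied to the same witnessing topology. No further commentary is needed.
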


\begin{question}\label{q6}
If $G$ is a countably compact or pseudocompact, dense-separable topological group, is $G$ hereditarily separable?
\end{question}

Obviously, for a dense-separable topological group $G$, each closed subgroup of $G$ is dense-separable iff each subgroup of $G$ is dense-separable. By (1) of Example~\ref{r0}, there exists a closed subgroup $H$ of a dense-separable paratopological group $G$ such that $H$ is not separable. We conjecture the following question is negative.

\begin{question}
Let $G$ be a dense-separable topological group. Is each closed subgroup $H$ of $G$ dense-separable?
\end{question}

In \cite[Theorem 3.2]{LMT2017}, the authors proved that for an $\omega$-narrow topological group $G$, it is homeomorphic to a subspace of a separable regular space iff $G$ is topologically isomorphic to a subgroup of a separable topological group. Therefore, we have the following question.

\begin{question}
Let $G$ be an $\omega$-narrow topological group. If $G$ is homeomorphic to a subspace of a dense-separable regular space, is $G$ topologically isomorphic to a subgroup of a dense-separable topological group?
\end{question}

Let $X, Y$ be two spaces and $f: X\rightarrow Y$ a mapping. We say that $f$ is {\it perfect} if $f^{-1}(y)$ is compact for each $y\in Y$ and $f(F)$ is closed in $Y$ for each closed subset $F$ of $X$.

\begin{proposition}
Let $G$ be a compact topological group and $K$ a closed dense-separable normal subgroup. Then $G$ is dense-separable iff $G/H$ is dense-separable.
\end{proposition}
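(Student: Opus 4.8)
Throughout I read the displayed $G/H$ as $G/K$, so the claim is that, for a compact group $G$ with closed (hence normal) dense-separable subgroup $K$, one has $G$ dense-separable iff $G/K$ is dense-separable. The forward implication needs no new work: a closed subgroup is in particular normal, so if $G$ is dense-separable then $G/K$ is dense-separable directly by Corollary~\ref{c1}. The whole content of the proposition therefore lies in the converse, and my plan is to translate every occurrence of ``dense-separable'' into ``metrizable'' using the compact-group dichotomy already established, and then to exploit the good behaviour of metrizability of \emph{compact} groups under group extensions.

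For the converse, I would first record that both $K$ and $G/K$ are compact: $K$ is closed in the compact group $G$, and $G/K$ is a continuous image of $G$. Since $K$ is compact and dense-separable, the Corollary following Theorem~\ref{tt1} (which identifies the compact dense-separable groups with the metrizable ones) shows that $K$ is metrizable. Assuming now that $G/K$ is dense-separable, the same corollary applied to the compact group $G/K$ shows that $G/K$ is metrizable. Thus both the kernel $K$ and the quotient $G/K$ are compact metrizable groups, i.e. they have countable weight.

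The remaining and only substantive step is to deduce that $G$ itself is metrizable from the metrizability of $K$ and of $G/K$. Since $K$ is a compact invariant subgroup, the quotient homomorphism $q\colon G\to G/K$ is a perfect map, its fibres being the cosets $gK$, each homeomorphic to $K$; combining a countable base of $G/K$ with a countable base of $K$ (it is precisely the compactness of $K$ that lets this pasting succeed) gives the weight estimate $w(G)\le w(G/K)\cdot w(K)=\omega$, so the compact group $G$ is metrizable, cf.\ \cite{AT2008, E1989}. Finally a compact metrizable group is separable metrizable, hence hereditarily separable, and therefore dense-separable, which closes the equivalence. The one genuinely non-formal point --- and the reason compactness of $K$ cannot be dropped --- is this weight estimate: without compact fibres $q$ need not be perfect, and an extension of a metrizable group by a metrizable group can fail to be metrizable, so the argument is deliberately confined to the compact setting.
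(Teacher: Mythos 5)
Your proof is correct and follows essentially the same route as the paper: necessity via Corollary~\ref{c1}, then for sufficiency converting dense-separability of the compact groups $K$ and $G/K$ into metrizability via Theorem~\ref{tt1}, and lifting metrizability to $G$ through the perfect quotient map. The only cosmetic difference is that you justify the lifting step by the weight estimate $w(G)\le w(K)\cdot w(G/K)$, whereas the paper cites \cite[Corollary 3.3.20]{AT2008} for the same fact.
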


\begin{proof}
Let $f: G\rightarrow G/H$ be the naturally quotient mapping. By Corollary~\ref{c1}, the necessity is obvious. Suppose that $G/H$ is dense-separable. Clearly, $G/H$ is compact. Hence it follows from Theorem~\ref{tt1} that $K$ and $G/K$ are separable metrizable. Since $f$ is a perfect mapping, it follows from \cite[Corollary 3.3.20]{AT2008} that $G$ is metrizable, thus $G$ is dense-separable.
\end{proof}

However, the following question is still unknown for us.

\begin{question}
Let $G$ be a topological group and $K$ a compact dense-separable normal subgroup. If $G/H$ is dense-separable, is $G$ dense-separable?
\end{question}

 \maketitle
\section{dense-subgroup-separable topological groups}
In this section, we mainly study dense-subgroup-separable topological groups, and give some partial answers to the following Questions~\ref{q3} and ~\ref{q8} respectively.

\begin{question}\label{q3}
How to characterize a dense-subgroup-separable topological group $G$? What if $G$ is compact?
\end{question}

The following Theorem~\ref{t22} gives a characterization for a Tychonoff space $X$ such that $X$ is dense-separable, which gives a partial answer to Questions~\ref{q2} and~\ref{q3}.

\begin{theorem}\label{t22}
Let $X$ be a Tychonoff space. Then $X$ is dense-separable iff, for each dense subgroup $H$ of $G(X)$, there exists a countable subset of $X_{H}$ such that $G(X_{H}, X)$ is dense in $G(X)$.
\end{theorem}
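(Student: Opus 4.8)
The plan is to reduce the statement to a single dictionary between dense subsets of $X$ and dense subgroups of $G(X)$. The key lemma I would prove first is that \emph{for a subset $Y\subseteq X$, the subgroup $G(Y,X)$ is dense in $G(X)$ if and only if $Y$ is dense in $X$}. For this I would invoke two standard facts about the (abelian) free topological group coming from \cite[Lemma 7.7.2 and Theorem 7.7.4]{AT2008}: since $X$ is Tychonoff, $X$ is a closed subspace of $G(X)$ and, for every closed $Z\subseteq X$, the generated subgroup $G(Z,X)$ is closed in $G(X)$; moreover, writing each $g\in G(X)$ in reduced form and letting $\mathrm{supp}(g)\subseteq X$ be the set of letters occurring in it, one has $x\in G(Z,X)\Rightarrow x\in Z$ for $x\in X$, so that $G(Z,X)=G(X)$ forces $Z=X$. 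Granting these, if $Y$ is dense in $X$ then the closed subgroup $\overline{G(Y,X)}$ contains $\overline{Y}=X$ and hence all of $G(X)$; conversely, if $G(Y,X)$ is dense then $G(X)=\overline{G(Y,X)}\subseteq G(\overline{Y},X)$, the latter being closed, so $G(\overline{Y},X)=G(X)$ and $\overline{Y}=X$.

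From the lemma one immediately gets the bridge to $X_H$: if $H$ is dense in $G(X)$ then, since $H\subseteq G(X_H,X)$, the larger subgroup $G(X_H,X)$ is dense, and so $X_H$ is dense in $X$.

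For the forward implication, assume $X$ is dense-separable and let $H$ be a dense subgroup of $G(X)$. By the bridge, $X_H$ is dense in $X$, hence separable, so I may pick a countable $Y\subseteq X_H$ that is dense in $X_H$; since $X_H$ is itself dense in $X$, $Y$ is dense in $X$, and the lemma then yields that $G(Y,X)$ is dense in $G(X)$. This $Y$ is the required countable subset of $X_H$.

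For the converse, let $D$ be an arbitrary dense subset of $X$; I must show $D$ is separable. Put $H=G(D,X)$: it is dense by the lemma, and $X_H\subseteq D$ because every element of $H$ has support contained in $D$. By hypothesis there is a countable $Y\subseteq X_H\subseteq D$ with $G(Y,X)$ dense in $G(X)$, so $Y$ is dense in $X$ by the lemma, hence dense in $D$; being countable, it witnesses the separability of $D$. Thus every dense subset of $X$ is separable, i.e.\ $X$ is dense-separable. The main obstacle is the lemma, and specifically its ``dense subgroup forces dense generating set'' half, where the structure theorem that closed subsets of $X$ generate closed subgroups of $G(X)$ (together with the support description of $G(Z,X)$) carries the weight; the surrounding density bookkeeping is then routine.
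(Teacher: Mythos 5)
Your proposal is correct and follows essentially the same route as the paper's proof: both rest on the dictionary that $G(Y,X)$ is dense in $G(X)$ iff $Y$ is dense in $X$ (proved via the closedness of $G(Z,X)$ for closed $Z\subseteq X$ together with the support description of generated subgroups), applied to $X_H$ in one direction and to $H=G(D,X)$ with $X_H\subseteq D$ in the other. Your only difference is organizational --- you isolate this equivalence as a standalone lemma, whereas the paper inlines the same arguments in each direction.
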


\begin{proof}
Necessity. Assume that $X$ is dense-separable. Take any dense subgroup $H$ of $G(X)$. We claim that $X_{H}$ is dense in $X$. Suppose not, then $\overline{X_{H}}$ is a proper closed subset of $X$. From \cite[Theorem 7.4.5]{AT2008}, it follows that $G(\overline{X_{H}}, X)$ is non-empty and closed in $G(X)$. Then $G(X)\setminus G(\overline{X_{H}}, X)$ is a non-empty open set in $G(X)$, but $(G(X)\setminus G(\overline{X_{H}}, X))\cap H=\emptyset$, which is a contradiction. Therefore, $X_{H}$ is dense in $X$. Since $X$ is dense-separable, there exists a countable subset $D$ of $X_{H}$ such that $D$ is dense in $X_{H}$, thus it is dense in $X$. Then it is obvious that $G(D, X)$ is a dense in $G(X)$.

Sufficiency. Take any dense subset $D$ of $X$. Then $G(D, X)$ is dense in $G(X)$. Clearly, $X_{G(D_{0}, X)}=D_{0}$. From our assumption, there exists a countable subset of $D_{0}$ such that $G(D_{0}, X)$ is dense in $G(X)$. We claim that $D_{0}$ is dense in $D$. Suppose not, $G(\overline{D_{0}}, X)$ is a closed proper subgroup of $G(X)$. Obviously, $G(D_{0}, X)\subset G(\overline{D_{0}}, X)$, which is a contradiction with the density of $G(D_{0}, X)$ in $G(X)$. Therefore, $D_{0}$ is a countable dense subset of $D$.
\end{proof}

By Theorem~\ref{t22}, we have the following corollary.

\begin{corollary}
Assume that $X$ is a Tychonoff space. If $G(X)$ is dense-subgroup-separable, then $X$ is dense-separable.
\end{corollary}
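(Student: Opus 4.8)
The plan is to reduce everything to the sufficiency criterion already packaged in Theorem~\ref{t22}. That theorem tells us that $X$ is dense-separable precisely when, for every dense subgroup $H$ of $G(X)$, some countable subset of $X_H$ generates (in the sense of $G(\cdot,X)$) a dense subgroup of $G(X)$. So I would fix an arbitrary dense subgroup $H$ of $G(X)$ and manufacture such a countable subset directly from the separability hypothesis.

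Since $G(X)$ is assumed dense-subgroup-separable and $H$ is dense, $H$ is separable; choose a countable dense subset $C$ of $H$. Each element of $C$ is a reduced word in finitely many letters of $X$, so let $D_{0}\subseteq X$ be the union of the supports of the elements of $C$. Then $D_{0}$ is countable and $C\subseteq G(D_{0}, X)$. The crucial observation is that $D_{0}\subseteq X_{H}$: because $C\subseteq H\subseteq G(X_{H}, X)$, every word in $C$ already uses only letters from $X_{H}$, so its support lies in $X_{H}$, and this is exactly where the minimality of $X_{H}$ is being exploited.

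Next I would transfer density. As $C$ is dense in $H$ and $H$ is dense in $G(X)$, the countable set $C$ is dense in $G(X)$; from $C\subseteq G(D_{0}, X)\subseteq G(X)$ it follows immediately that $G(D_{0}, X)$ is dense in $G(X)$. Thus $D_{0}$ is a countable subset of $X_{H}$ with $G(D_{0}, X)$ dense in $G(X)$, which is precisely the condition demanded by Theorem~\ref{t22}. Since $H$ was arbitrary, Theorem~\ref{t22} yields that $X$ is dense-separable.

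I expect the only genuinely delicate point to be the support bookkeeping in the second step: one must invoke that $G(D_{0}, X)$ and $G(X_{H}, X)$ are honest topological subgroups of $G(X)$ carrying the subspace structure of the corresponding free groups (\cite[Lemma 7.7.2 and Theorem 7.7.4]{AT2008}), and that the assignment of a finite support to each word is well defined, so that $C\subseteq H\subseteq G(X_{H}, X)$ forces $D_{0}\subseteq X_{H}$. Everything else is a straight chain of density inclusions, and no separate closure argument is needed, since Theorem~\ref{t22} has already absorbed the step that a countable $D_{0}$ with $G(D_{0}, X)$ dense must itself be dense in the ambient dense subset.
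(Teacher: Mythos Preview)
Your proposal is correct and follows exactly the route the paper intends: the paper simply states that the corollary follows from Theorem~\ref{t22}, and your argument supplies the straightforward verification of the condition in that theorem by extracting the countable support set $D_{0}\subseteq X_{H}$ from a countable dense subset of $H$. Nothing further is needed.
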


We conjecture the following question is positive which gives a partial answer to Question~\ref{q3}.

\begin{question}\label{q8}
Let $G$ be a compact dense-subgroup-separable topological group. Is $G$ dense-separable?
\end{question}

Next we give some partial answers to Question~\ref{q8}.

\begin{theorem}\label{t33333}
Let $G$ be a compact abelian torsion topological group. If $G$ is dense-subgroup-separable, then $G$ is metrizable; thus $G$ is dense-separable.
\end{theorem}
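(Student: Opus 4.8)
The plan is to use Pontryagin duality to represent $G$ as a product of finite cyclic groups, and then, if this product is uncountable, to exhibit a dense subgroup that fails to be separable, contradicting the hypothesis that $G$ is dense-subgroup-separable.

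First I would verify that a compact abelian torsion group is bounded. Writing $G=\bigcup_{n\in\mathbb{N}}G[n!]$ as an increasing union of the closed subgroups $G[k]=\{g\in G:kg=e\}$, the Baire category theorem yields some $G[m]$ with nonempty interior; a subgroup with nonempty interior is open, hence of finite index in the compact group $G$. Then $G/G[m]$ is finite of some exponent $k$, so $kG\subseteq G[m]$ and thus $mkG=\{e\}$, i.e.\ $G$ is bounded. Since $G$ is then compact abelian of some period $m$, its discrete dual $\widehat{G}$ satisfies $m\widehat{G}=\{0\}$, so by the structure theorem for bounded abelian groups (Pr\"{u}fer's theorem) $\widehat{G}\cong\bigoplus_{\alpha\in A}Z(d_{\alpha})$ with each $d_{\alpha}\mid m$. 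Dualizing, and using that a finite cyclic group is self-dual and that the dual of a direct sum is the corresponding product, I obtain a topological isomorphism $G\cong\prod_{\alpha\in A}Z(d_{\alpha})$. As $G$ is compact, it is metrizable iff $w(G)=\omega$ iff $\widehat{G}$ is countable iff $A$ is countable.

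It then remains to show that if $A$ is uncountable then $G$ is not dense-subgroup-separable. Let $S=\Sigma\Pi_{\alpha\in A}Z(d_{\alpha})$ be the subgroup of all $g\in G$ with countable support; it is dense in $G$ because every nonempty basic open set of the product restricts only finitely many coordinates and so is met by a finitely supported element. Fixing $A_{0}\subseteq A$ with $|A_{0}|=\omega_{1}$, the projection $\pi_{A_{0}}\colon G\to\prod_{\alpha\in A_{0}}Z(d_{\alpha})$ is continuous and carries $S$ onto $\Sigma\Pi_{\alpha\in A_{0}}Z(d_{\alpha})$, which is not separable by Proposition~\ref{p6}. Since a continuous image of a separable space is separable, $S$ cannot be separable, so the dense subgroup $S$ witnesses that $G$ is not dense-subgroup-separable, a contradiction. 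Hence $A$ is countable and $G$ is metrizable; being compact metrizable, $G$ is second countable, hence hereditarily separable, and therefore dense-separable, which gives the final assertion.

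I expect the bulk of the work to lie in the first two steps, namely pinning down the precise product-of-finite-cyclics representation of a compact abelian torsion group through boundedness (Baire) and the duality/Pr\"{u}fer argument. Once that representation is available, the required dense non-separable subgroup is precisely the $\Sigma$-product whose non-separability is already recorded in Proposition~\ref{p6}, so the concluding contradiction is routine.
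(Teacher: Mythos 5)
Your proof is correct and follows essentially the same route as the paper: decompose $G$ as a (topological) product of finite cyclic groups and then invoke Proposition~\ref{p6} to force the index set to be countable, whence metrizability. The only difference is one of self-containment: the paper simply cites \cite[Corollary 4.2.2]{DPS1990} for the structure theorem, whereas you re-derive it (Baire category for boundedness, then Pr\"ufer's theorem plus Pontryagin duality), and you also spell out the dense $\Sigma$-product subgroup and its projection onto an $\omega_{1}$-subproduct, which the paper leaves implicit in its appeal to Proposition~\ref{p6}.
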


\begin{proof}
Assume that $G$ is dense-subgroup-separable. Since $G$ is a compact abelian torsion topological group, it follows from \cite[Corollary 4.2.2]{DPS1990} that $G$ is topologically isomorphic to a product of finite cyclic groups. Let $G=\prod_{i\in I} G_{i}$, where each $G_{i}$ is a finite cyclic group with a discrete topology. Then $I$ is at most countable by Proposition~\ref{p6}, hence $G$ is metrizable.
\end{proof}

\begin{theorem}\label{t44444}
Let $G$ be a compact abelian torsion-free topological group. If $G$ is dense-subgroup-separable, then $G$ is metrizable; thus $G$ is dense-separable.
\end{theorem}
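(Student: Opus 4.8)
The plan is to follow the strategy of Theorem~\ref{t33333}, but since a torsion-free compact abelian group need not be a topological product of metrizable pieces, the reduction to Proposition~\ref{p6} cannot be read off $G$ directly and must be carried out through Pontryagin duality. So the first step is to pass to the dual: write $X=\widehat{G}$, a discrete abelian group with $w(G)=|X|$. Because $G$ is compact and torsion-free, $X$ is divisible (this is the standard duality dictionary; see \cite{HM1998} or \cite{DPS1990}), and it plays the role that ``product of finite cyclic groups'' plays in the torsion case. Assuming $G$ is non-metrizable, we have $|X|\geq\omega_{1}$, and the goal is to manufacture a dense subgroup of $G$ that fails to be separable, contradicting dense-subgroup-separability.

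The key algebraic input is the structure theorem for divisible groups: $X\cong\bigoplus_{i\in J}C_{i}$, where each $C_{i}$ is a copy of $\mathbb{Q}$ or of a quasicyclic group $\mathbb{Z}(p^{\infty})$. Since each summand is countable, $|X|\geq\omega_{1}$ forces $|J|\geq\omega_{1}$, so after discarding summands we obtain a subgroup $B=\bigoplus_{\alpha<\omega_{1}}C_{\alpha}\leq X$ with every $C_{\alpha}$ a nontrivial countable group. I would then dualize the inclusion $B\hookrightarrow X$: the restriction of characters induces a continuous homomorphism $q\colon G=\widehat{X}\to\widehat{B}=\prod_{\alpha<\omega_{1}}\widehat{C_{\alpha}}$ with kernel the annihilator $B^{\perp}$, so that $G/B^{\perp}\cong\widehat{B}$. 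Being a continuous surjective homomorphism from a compact group onto a Hausdorff group, $q$ is open, and each factor $\widehat{C_{\alpha}}$ (a circle-dual solenoid $\widehat{\mathbb{Q}}$ or a group of $p$-adic integers $\widehat{\mathbb{Z}(p^{\infty})}$) is a nontrivial compact metrizable abelian group.

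With $q$ in hand, the final step pulls Proposition~\ref{p6} back across $q$. Let $P=\prod_{\alpha<\omega_{1}}\widehat{C_{\alpha}}$ and let $\Sigma P$ be its $\Sigma$-product about any base point, which is dense in $P$ and, by Proposition~\ref{p6}, not separable. Consider $M=q^{-1}(\Sigma P)$. Since $q$ is an open continuous surjection and $\Sigma P$ is dense in $P$, a routine argument shows $M$ is dense in $G$; being the preimage of a subgroup, $M$ is itself a subgroup. If $M$ were separable, then so would be its continuous image $q(M)=\Sigma P$, a contradiction. Hence $M$ is a dense non-separable subgroup of $G$, contradicting the hypothesis. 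Therefore $|X|\leq\omega$, so $G$ is metrizable; being compact and metrizable it is separable metrizable, hence hereditarily separable and in particular dense-separable.

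The main obstacle is precisely the passage that the torsion case sidesteps: one cannot exhibit a dense $\Sigma$-product inside $G$ itself, so everything hinges on correctly translating ``non-metrizable torsion-free compact'' into an uncountable direct-sum subgroup of the dual and on checking that dualizing produces a genuine product of nontrivial metrizable factors onto which $G$ surjects openly. Once that surjection is secured, the verification that the pulled-back $\Sigma$-product stays dense and non-separable is routine but essential book-keeping.
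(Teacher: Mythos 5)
Your argument is correct: the dual $X=\widehat{G}$ is divisible because $G$ is compact torsion-free, the divisible structure theorem gives $X\cong\bigoplus_{i\in J}C_{i}$ with each $C_{i}$ a copy of $\mathbb{Q}$ or a quasicyclic group, dualizing the inclusion of an uncountable sub-sum $B=\bigoplus_{\alpha<\omega_{1}}C_{\alpha}$ yields (by injectivity of $\mathbb{T}$ and compactness) an open continuous surjection $q\colon G\to\widehat{B}=\prod_{\alpha<\omega_{1}}\widehat{C_{\alpha}}$, and the preimage under $q$ of the $\Sigma$-product is a dense subgroup of $G$ that cannot be separable, since its continuous image would be the $\Sigma$-product, contradicting Proposition~\ref{p6}. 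The difference from the paper lies in where the structure theorem is applied and whether the pullback is needed at all. The paper invokes the structure theorem already in its compact form \cite[Corollary 8.5]{HM1998}: a compact torsion-free abelian group is topologically isomorphic to $(\widehat{\mathbb{Q}})^{X_{0}}\times\prod_{p\in\mathcal{P}}\mathbb{Z}_{p}^{X_{p}}$. So your opening premise --- that such a group ``need not be a topological product of metrizable pieces'' --- is mistaken; it always is one, this product decomposition being exactly the dual of the divisible decomposition you use. With $G$ itself written as a product, the paper applies Proposition~\ref{p6} directly: if some $X_{p}$ were uncountable, the $\Sigma$-product based at the identity would already be a dense non-separable subgroup of $G$, and no quotient or pullback is required. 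Your detour through the open surjection is nonetheless sound, and it is essentially the same device the paper uses for the connected case (Theorem~\ref{t444444}), where only a quotient of $G$, not $G$ itself, is a torus; what your route buys is that one needs only a surjection onto an uncountable product rather than a full decomposition of $G$, at the cost of extra length in a case where the full decomposition is in fact available.
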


\begin{proof}
Assume that $G$ is dense-subgroup-separable. Since $G$ is a compact abelian torsion-free topological group, it follows from \cite[Corollary 8.5]{HM1998} that there exists a family of sets $\{X_{p}: p\in\{0\}\cup\mathcal{P}\}$ such that $G$ is topologically isomorphic to $(\widehat{\mathbb{Q}})^{X_{0}}\times \prod_{p\in\mathcal{P}}\mathbb{Z}_{p}^{X_{p}}$, where $\mathcal{P}$ denotes the set of all prime
numbers. Since $G$ is dense-subgroup-separable, it follows from Proposition~\ref{p6} that each $X_{p}$ is countable for each $p\in\{0\}\cup\mathcal{P}$. Therefore, $G$ is metrizable.
\end{proof}

A space $X$ is said to be {\it arcwise connected} if for every pair $x_{1}, x_{2}$ of distinct points of $X$ there exists a homeomorphic embedding $h: [0,1]\rightarrow X$ such that $h(0)=x_{1}$ and $h(1)=x_{2}$.

\begin{theorem}
Let $G$ be an arcwise connected abelian group. If $G$ is dense-subgroup-separable, then $G$ is metrizable; thus $G$ is dense-separable.
\end{theorem}

\begin{proof}
By Torus Proposition (see \cite[page of 406]{HM1998}, $G$ is torus, that is, $G$ is topologically isomorphic to $\mathbb{T}^{\kappa}$. Since $G$ is dense-subgroup-separable, it follows from Proposition~\ref{p6} that $K=\aleph_{0}$, hence $G$ is metrizable.
\end{proof}

\begin{theorem}\label{t444444}
Let $G$ be a compact abelian connected topological group. If $G$ is dense-subgroup-separable, then $G$ is metrizable; thus $G$ is dense-separable.
\end{theorem}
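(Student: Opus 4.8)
The plan is to argue by Pontryagin duality and reduce the whole statement to producing a single non-separable dense subgroup whenever $G$ fails to be metrizable. Write $A=\widehat{G}$ for the (discrete) character group of $G$. Since $G$ is a compact abelian group we have $w(G)=|A|$, so $G$ is metrizable iff $A$ is countable; and since $G$ is connected, $A$ is torsion-free. Thus it suffices to show: if $A$ is torsion-free with $|A|\geq\omega_{1}$, then $G=\widehat{A}$ is not dense-subgroup-separable. Note that connectedness enters only through torsion-freeness of $A$, which is exactly what separates this from the torsion-free-group case of Theorem~\ref{t44444}.

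First I would pass from cardinality to torsion-free rank. A torsion-free abelian group of rank $r$ embeds into its divisible hull $A\otimes\mathbb{Q}\cong\mathbb{Q}^{(r)}$, which has cardinality $\max(r,\omega)$; hence a torsion-free group of countable rank is countable. Consequently $|A|\geq\omega_{1}$ forces $r_{0}(A)\geq\omega_{1}$, so $A$ contains an independent family of size $\omega_{1}$, and the subgroup it generates is free, $F\cong\mathbb{Z}^{(\omega_{1})}$.

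Next I would dualize the inclusion $F\hookrightarrow A$. Restriction of characters yields a continuous homomorphism $f\colon G=\widehat{A}\to\widehat{F}=\mathbb{T}^{\omega_{1}}$, which is surjective because $\mathbb{T}$ is divisible, so every character of $F$ extends to $A$; being a continuous surjective homomorphism between compact groups, $f$ is open. Now the $\Sigma$-product $\Sigma\mathbb{T}^{\omega_{1}}$ is a dense subgroup of $\mathbb{T}^{\omega_{1}}$ that is not separable by Proposition~\ref{p6}. Set $H=f^{-1}(\Sigma\mathbb{T}^{\omega_{1}})$, a subgroup of $G$. Since $f$ is open and surjective and $\Sigma\mathbb{T}^{\omega_{1}}$ is dense, $H$ is dense in $G$ (any nonempty open $U\subset G$ has $f(U)$ open and nonempty, so $f(U)$ meets $\Sigma\mathbb{T}^{\omega_{1}}$, whence $U$ meets $H$); and since $f(H)=\Sigma\mathbb{T}^{\omega_{1}}$ is a continuous image of $H$ that is not separable, $H$ cannot be separable, because a continuous image of a separable space is separable. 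This contradicts dense-subgroup-separability of $G$. Therefore $|A|\leq\omega$ and $G$ is metrizable; the final clause then follows from Proposition~\ref{p5}, since a compact metrizable group is separable and countably tight, hence dense-separable.

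The routine part is the duality bookkeeping and the structure facts already invoked in the preceding proofs. The two steps I expect to be worth writing out carefully are the cardinality-to-rank reduction (that a countable-rank torsion-free group is countable, forcing the free quotient $\mathbb{T}^{\omega_{1}}$) and the verification that $f$ is open and that $f^{-1}(\Sigma\mathbb{T}^{\omega_{1}})$ is simultaneously dense and non-separable; once these are in place, Proposition~\ref{p6} does the essential work, exactly as in Theorems~\ref{t44444} and the torsion case.
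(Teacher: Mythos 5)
Your proof is correct, but it takes a genuinely different route from the paper's. The paper argues forward from structure theory: by \cite[Corollary 8.18]{HM1998} there is a compact zero-dimensional subgroup $D$ of $G$ such that $G/D$ is a torus $\mathbb{T}^{X}$; since the quotient homomorphism is open, $G/D$ inherits dense-subgroup-separability, so Proposition~\ref{p6} forces $X$ to be countable, and metrizability is then transferred back from $G/D$ to $G$ through the (perfect) quotient map. You instead run the contrapositive through Pontryagin duality: connectedness makes $\widehat{G}$ torsion-free, non-metrizability makes $|\widehat{G}|\geq\omega_{1}$, the divisible-hull rank argument then yields a free subgroup $\mathbb{Z}^{(\omega_{1})}\leq\widehat{G}$, and dualizing its inclusion gives an open continuous surjection $f\colon G\to\mathbb{T}^{\omega_{1}}$; pulling the dense non-separable $\Sigma$-product of Proposition~\ref{p6} back along $f$ produces a dense non-separable subgroup of $G$ itself, contradicting dense-subgroup-separability directly. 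The essential ingredients coincide --- both proofs bottleneck at Proposition~\ref{p6} and at the stability of dense-subgroup-separability along open surjective homomorphisms (the paper pushes the property forward to a quotient, you pull its failure back) --- but your scheme has a concrete advantage: it never needs the paper's final inference that metrizability of $G/D$ plus perfectness of the quotient map forces metrizability of $G$, which as written is the weakest step of the paper's proof (perfect preimages of metrizable spaces need not be metrizable; what actually closes that gap is a rank/duality computation of exactly the kind you perform, or the fact that $D$ may be chosen arbitrarily small). Conversely, the paper's route is shorter once the structure theorem is quoted, avoids duality bookkeeping, and parallels its treatment of the torsion, torsion-free and totally disconnected cases. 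Your finishing step for the last clause (compact metrizable implies separable and countably tight, hence dense-separable by Proposition~\ref{p5}) is also sound.
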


\begin{proof}
Assume that $G$ is dense-subgroup-separable. Since $G$ is a compact abelian connected topological group, it follows from \cite[Corollary 8.18]{HM1998} that there is arbitrary small 0-dimension compact subgroup $D$ such that $G/D$ is a torus. Obviously, the natural mapping $\pi: G\rightarrow G/D$ is open, perfect and continuous, hence it is easy to see that $G/D$ is dense-subgroup-separable. Since $G/D$ is a torus, there exists a set $X$ such that $G/D$ is topologically isomorphic to $\mathbb{T}^{X}$. Then $X$ is countable by Proposition~\ref{p6} since $G/D$ is dense-subgroup-separable. Then $G/D$ is metrizable, hence $G$ is also metrizable since $f$ is a perfect mapping.
\end{proof}

From \cite[Corollary 8.5]{HM1998}, a compact abelian group is connected iff it is divisible, hence we have the following corollary.

\begin{corollary}
Let $G$ be a compact abelian divisible topological group. If $G$ is dense-subgroup-separable, then $G$ is metrizable; thus $G$ is dense-separable.
\end{corollary}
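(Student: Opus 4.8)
The plan is to reduce the divisible case directly to the connected case via the cited structural fact. The statement to prove is: if $G$ is a compact abelian divisible topological group that is dense-subgroup-separable, then $G$ is metrizable (and hence dense-separable). By the remark preceding the corollary, a compact abelian group is connected if and only if it is divisible; this is exactly \cite[Corollary 8.5]{HM1998}. So the first and essentially only step is to invoke this equivalence to conclude that $G$ is connected.

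Once $G$ is known to be connected, I would apply Theorem~\ref{t444444}, which states that a compact abelian connected dense-subgroup-separable topological group is metrizable. Since $G$ is compact, abelian, divisible, and dense-subgroup-separable by hypothesis, and divisibility gives connectedness, all the hypotheses of Theorem~\ref{t444444} are met. Therefore $G$ is metrizable.

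Finally, I would close the argument by recalling that any compact metrizable group is separable metrizable, hence hereditarily separable, hence dense-separable (a compact metrizable space has countable weight, so every subspace is second countable and thus separable). Alternatively, one can note that metrizability of a compact group already forces $\pi(G)=\omega$, so the earlier machinery (e.g.\ the route through Theorem~\ref{tt1} or Proposition~\ref{p5}) yields dense-separability immediately. This gives the ``thus $G$ is dense-separable'' tail of the statement.

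There is essentially no obstacle here: the corollary is a pure transcription of the connected case through the divisible-equals-connected dictionary. The only point requiring any care is making sure the equivalence in \cite[Corollary 8.5]{HM1998} is applied in the correct direction --- namely that divisibility of a \emph{compact} abelian group implies connectedness --- since divisibility alone (without compactness) does not force connectedness. Given compactness is assumed, the implication is immediate, and the proof is a one-line reduction to Theorem~\ref{t444444}.
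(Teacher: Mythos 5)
Your proposal is correct and is exactly the paper's own argument: the paper derives this corollary from Theorem~\ref{t444444} by citing \cite[Corollary 8.5]{HM1998} that a compact abelian group is connected iff it is divisible. Your additional justification of the ``thus dense-separable'' tail (compact metrizable $\Rightarrow$ second countable $\Rightarrow$ hereditarily separable) matches how the paper treats that step throughout Section~3.
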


By Theorems~\ref{t33333}, ~\ref{t44444} and~\ref{t444444} and \cite[Theorem 8.45]{HM1998}, we have the following corollary.

\begin{corollary}
Let $G$ be a compact torsion (or divisible, or torsion-free) abelian group. Then the following conditions are equivalent:
\begin{enumerate}
\smallskip
\item $G$ is dense-subgroup-separable;

\smallskip
\item $G$ is dense-separable;

\smallskip
\item $\widehat{G}$ is countable, where $\widehat{G}$ is the dual group of $G$ which is endowed with the compact-open topology;

\smallskip
\item $G$ is topologically isomorphic to a subgroup of the torus $\mathbb{T}^{\aleph_{0}}$.
\end{enumerate}
\end{corollary}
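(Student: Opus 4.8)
The plan is to prove the chain of equivalences by first isolating the genuinely analytic content, namely that dense-subgroup-separability forces metrizability (which is exactly what the three preceding theorems deliver), and then tying metrizability to the purely algebraic-dual conditions (3) and (4) via Pontryagin duality. Concretely, I would establish the cycle $(2)\Rightarrow(1)\Rightarrow\text{metrizable}\Rightarrow(2)$ to obtain $(1)\Leftrightarrow(2)\Leftrightarrow\text{metrizable}$, and then prove separately that metrizability is equivalent to each of (3) and (4). The implication $(2)\Rightarrow(1)$ is immediate, since, as recorded in the text, every dense-separable semitopological group is dense-subgroup-separable.

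For $(1)\Rightarrow\text{metrizable}$ I would split into the three cases permitted by the hypothesis. If $G$ is torsion, apply Theorem~\ref{t33333}; if $G$ is torsion-free, apply Theorem~\ref{t44444}; and if $G$ is divisible, recall from \cite[Corollary 8.5]{HM1998} that a compact abelian group is divisible iff it is connected, so Theorem~\ref{t444444} applies (this is exactly the preceding divisible corollary). In every case dense-subgroup-separability yields that $G$ is metrizable. For the reverse step $\text{metrizable}\Rightarrow(2)$, observe that a compact metrizable group is second countable, hence separable metrizable, hence hereditarily separable; and every hereditarily separable space is dense-separable, as noted in Section~2. This closes the cycle.

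It remains to match metrizability with (3) and (4), and here no dense-separability is used at all. By Pontryagin duality for compact abelian groups (cf. \cite[Theorem 8.45]{HM1998}) one has $w(G)=|\widehat{G}|$ whenever $\widehat{G}$ is infinite, and a compact group is metrizable iff $w(G)\leq\aleph_{0}$; together these give $\text{metrizable}\Leftrightarrow(3)$. For (4), note that $\mathbb{T}^{\aleph_{0}}$ is metrizable, so any (topological) subgroup of it is metrizable, giving $(4)\Rightarrow\text{metrizable}$; conversely, if $\widehat{G}$ is countable, the characters separate points of $G$ and the canonical evaluation map embeds $G$ as a subgroup of $\mathbb{T}^{\widehat{G}}=\mathbb{T}^{\aleph_{0}}$, giving $(3)\Rightarrow(4)$. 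Assembling these, all four conditions are equivalent.

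I do not expect a serious obstacle, since the substantive work is carried by Theorems~\ref{t33333}, \ref{t44444} and~\ref{t444444}; the corollary is an assembly. The only points requiring care are ensuring the divisibility branch is routed through connectedness so that Theorem~\ref{t444444} genuinely applies, and invoking the correct form of the weight-dual relation $w(G)=|\widehat{G}|$ so that the equivalences with (3) and (4) are clean.
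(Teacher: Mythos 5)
Your proposal is correct and follows essentially the same route as the paper: the paper derives this corollary precisely by citing Theorems~\ref{t33333}, \ref{t44444} and~\ref{t444444} (with the divisible case routed through connectedness via \cite[Corollary 8.5]{HM1998}) together with \cite[Theorem 8.45]{HM1998} for the dual-group and torus-embedding equivalences. You have simply made explicit the assembly that the paper leaves implicit, and all the steps check out.
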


\begin{theorem}\label{t5555555}
Let $G$ be a compact abelian totally disconnected topological group. If $G$ is dense-subgroup-separable, then $G$ is metrizable; thus $G$ is dense-separable.
\end{theorem}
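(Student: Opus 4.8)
The plan is to reduce to the compact abelian torsion case already settled in Theorem~\ref{t33333} by passing to the bounded quotients $G/nG$, and then to use Pontryagin duality (see \cite{HM1998}) to recover the weight of $G$. Since $G$ is compact abelian, its weight equals $|\widehat{G}|$ and $G$ is metrizable precisely when $\widehat{G}$ is countable; moreover, as $G$ is compact and totally disconnected its connected component is trivial, so the annihilator of the torsion subgroup of $\widehat{G}$ vanishes and hence $\widehat{G}$ is a (discrete) torsion group. Thus it suffices to prove that $\widehat{G}$ is countable.

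Fix $n\in\mathbb{N}$ and consider the continuous endomorphism $g\mapsto ng$ of $G$. Its image $nG$ is compact, hence closed, so the quotient $G/nG$ is a compact abelian group of exponent dividing $n$; in particular $G/nG$ is torsion. The quotient homomorphism $\pi_{n}\colon G\to G/nG$ is continuous, open and surjective, so if $H$ is any dense subgroup of $G/nG$ then $\pi_{n}^{-1}(H)$ is a dense subgroup of $G$; since $G$ is dense-subgroup-separable, $\pi_{n}^{-1}(H)$ is separable, and therefore its continuous image $H=\pi_{n}(\pi_{n}^{-1}(H))$ is separable as well. Hence $G/nG$ is again dense-subgroup-separable, and Theorem~\ref{t33333} shows that $G/nG$ is metrizable.

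It remains to assemble these pieces through duality. The annihilator of $nG$ in $\widehat{G}$ is exactly $\widehat{G}[n]=\{\chi\in\widehat{G}: n\chi=e\}$, so $\widehat{G/nG}$ is topologically isomorphic to $\widehat{G}[n]$; since $G/nG$ is compact and metrizable, $\widehat{G}[n]$ is countable for every $n\in\mathbb{N}$. Because $\widehat{G}$ is torsion we have $\widehat{G}=\bigcup_{n\in\mathbb{N}}\widehat{G}[n]$, a countable union of countable sets, so $\widehat{G}$ is countable and $G$ is metrizable; a compact metrizable group is separable metrizable, hence hereditarily separable and so dense-separable. The main obstacle is precisely the step that sidesteps the absence of a clean product decomposition for compact abelian pro-$p$ groups: rather than splitting $G$ into Sylow factors (whose individual structure is delicate), one reduces uniformly to the already-proven torsion case via the bounded quotients $G/nG$, and the identity $\widehat{G/nG}\cong\widehat{G}[n]$ is what converts metrizability of these quotients back into countability of $\widehat{G}$.
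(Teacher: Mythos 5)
Your proof is correct, and it follows a genuinely different route from the paper's. The paper's argument is purely structural: it cites \cite[Corollary 8.8]{HM1998} to decompose $G$ as the product $\prod_{p\in\mathcal{P}}G_{p}$ of its $p$-primary (Sylow) components, notes that each factor then inherits dense-subgroup-separability, applies Theorem~\ref{t33333} to each $G_{p}$, and finishes because a countable product of metrizable groups is metrizable. You instead work through Pontryagin duality and bounded quotients: total disconnectedness makes $\widehat{G}$ torsion; each $nG$ is closed, the quotient $G/nG$ is compact of exponent dividing $n$ and inherits dense-subgroup-separability through the open quotient map (your verification of this inheritance is exactly right); Theorem~\ref{t33333} then makes $G/nG$ metrizable, and the annihilator identification $\widehat{G/nG}\cong\widehat{G}[n]$ converts this into countability of each $\widehat{G}[n]$, whence $\widehat{G}=\bigcup_{n\in\mathbb{N}}\widehat{G}[n]$ is countable and $G$ is metrizable. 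Both proofs lean on Theorem~\ref{t33333} as the key input, but your route has a concrete advantage at precisely the point where the paper is loose: Theorem~\ref{t33333} is stated for compact abelian \emph{torsion} groups, and your quotients $G/nG$ genuinely are torsion, whereas the paper's factors $G_{p}$ are only pro-$p$ groups and need not be torsion (the $p$-adic integers $\mathbb{Z}_{p}$ are the standard example), so the paper's direct appeal to Theorem~\ref{t33333} for the $G_{p}$ requires an additional argument---for instance, exactly the quotient-and-duality argument you give, applied factorwise or globally. What the paper's approach buys in exchange is brevity: granted the product decomposition and that application of Theorem~\ref{t33333}, it is a three-line proof with no duality computations, while yours needs the standard facts that the dual of a totally disconnected compact abelian group is torsion, that $\widehat{G/N}$ is the annihilator of $N$, and that weight equals dual cardinality for infinite compact abelian groups.
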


\begin{proof}
From \cite[Corollary 8.8]{HM1998}, it follows that $G$ is the product of $\prod_{p\in\mathcal{P}}G_{p}$ of its $p$-primary components. Since $G$ is dense-subgroup-separable, it follows that each $G_{p}$ is dense-subgroup-separable, then each $G_{p}$ is metrizable by Theorem~\ref{t33333}. Therefore, $G$ is metrizable.
\end{proof}

An abstract group is called {\it simple} if it has no proper non-trivial normal
subgroup. A topological group G is said to be {\it topologically simple} if it has no proper nontrivial
closed normal subgroup. Since the connected component of the identity of
any topological group is a closed normal subgroup, it follows that each topologically simple
group is either totally disconnected or connected. Therefore, we have the following corollary by Theorems~\ref{t444444} and ~\ref{t5555555}.

\begin{corollary}
Let $G$ be a compact abelian topological simple group. If $G$ is dense-subgroup-separable, then $G$ is metrizable; thus $G$ is dense-separable.
\end{corollary}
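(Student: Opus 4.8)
The plan is to reduce the corollary to the two cases already settled, using the dichotomy recalled immediately before the statement. First I would let $G_{0}$ denote the connected component of the identity $e$ in $G$. As observed above, $G_{0}$ is a closed normal subgroup of $G$; since $G$ is topologically simple, $G_{0}$ can only be one of the two trivial closed normal subgroups, so either $G_{0}=\{e\}$ or $G_{0}=G$. These two alternatives say precisely that $G$ is totally disconnected or that $G$ is connected, respectively, and they are exhaustive.

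Next I would dispose of each case by quoting the relevant theorem, after noting that in both cases $G$ remains a compact abelian dense-subgroup-separable group, so the hypotheses of those theorems are met. If $G_{0}=G$, then $G$ is connected, and Theorem~\ref{t444444} applies directly to give that $G$ is metrizable, hence dense-separable. If instead $G_{0}=\{e\}$, then $G$ is totally disconnected, and Theorem~\ref{t5555555} yields the same conclusion. Since the two cases cover every possibility, $G$ is metrizable in either event, and the concluding assertion that $G$ is then dense-separable is already part of the statements of both theorems.

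I do not expect any genuine obstacle here: the whole content is the observation that topological simplicity forces the identity component to be trivial or the whole group, and the metrizability in each of the two resulting cases has already been established in Theorems~\ref{t444444} and~\ref{t5555555}. The only point worth stating explicitly is that the case split is clean, which is exactly the remark preceding the corollary, so no additional argument is required.
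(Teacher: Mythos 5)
Your proposal is correct and is essentially identical to the paper's own argument: the paper likewise observes that the identity component is a closed normal subgroup, so topological simplicity forces $G$ to be either connected or totally disconnected, and then invokes Theorem~\ref{t444444} and Theorem~\ref{t5555555} for the two cases. No difference in approach and no gaps.
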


\begin{corollary}
The compact group $\mathbb{T}^{\mathfrak{c}}$ is not dense-subgroup-separable.
\end{corollary}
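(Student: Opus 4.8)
The plan is to argue by contraposition, reducing to the structure theory already developed for compact connected abelian groups. First I would observe that $\mathbb{T}^{\mathfrak{c}}$ is exactly a torus $\mathbb{T}^{\kappa}$ with $\kappa=\mathfrak{c}$, hence a compact connected abelian topological group; indeed it is arcwise connected, being a product of the arcwise connected group $\mathbb{T}$. Next I would record that its weight equals $\mathfrak{c}\geq\omega_{1}$, so it is not metrizable. These two facts are all the input the cited theorems require.

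Then I would invoke Theorem~\ref{t444444}: a compact connected abelian group that is dense-subgroup-separable must be metrizable. Since $\mathbb{T}^{\mathfrak{c}}$ is compact, connected and abelian but not metrizable, the contrapositive forces $\mathbb{T}^{\mathfrak{c}}$ to fail to be dense-subgroup-separable, which is exactly the claim.

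Alternatively, one can give a self-contained argument that bypasses the classification and appeals only to Proposition~\ref{p6}. Here I would exhibit a concrete dense subgroup that is not separable, namely the $\Sigma$-product $\Sigma\Pi_{\alpha<\mathfrak{c}}\mathbb{T}$ consisting of all points with countable support (only countably many coordinates distinct from $1$). This set is a subgroup, since the supports of products and of inverses remain countable, and it is dense in $\mathbb{T}^{\mathfrak{c}}$. To see that it is not separable, I would project it onto $\Sigma\Pi_{\alpha<\omega_{1}}\mathbb{T}$ by restricting to the first $\omega_{1}$ coordinates; this projection is continuous and surjective, so separability would descend, contradicting Proposition~\ref{p6}. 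Hence $\mathbb{T}^{\mathfrak{c}}$ possesses a dense non-separable subgroup, and it is not dense-subgroup-separable.

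The argument has essentially no deep obstacle, as befits a corollary; the only points needing care are verifying that $\mathbb{T}^{\mathfrak{c}}$ genuinely meets the hypotheses of the theorem being applied (compactness together with connectedness or arcwise connectedness) and that its weight is uncountable, so that non-metrizability holds. In the self-contained route, the one subtlety is bridging the index $\mathfrak{c}$ of the corollary to the index $\omega_{1}$ of Proposition~\ref{p6}, which is handled by the coordinate projection onto $\omega_{1}$ many factors.
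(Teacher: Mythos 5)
Your first route is exactly the paper's own (implicit) proof: the corollary is stated without a separate argument, as an immediate consequence of Theorem~\ref{t444444} (equivalently, of its divisible or arcwise connected variants) applied in contrapositive form to $\mathbb{T}^{\mathfrak{c}}$, which is compact, abelian, connected, and of weight $\mathfrak{c}>\omega$, hence non-metrizable. Your verification of these hypotheses is all that the paper's route requires, so on this branch you and the paper coincide.

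Your second, self-contained route is genuinely different and worth comparing. Exhibiting the $\Sigma$-product $\Sigma\Pi_{\alpha<\mathfrak{c}}\mathbb{T}$ (points with only countably many coordinates distinct from $1$) as a concrete dense non-separable subgroup proves the statement directly from the definition, and your projection onto the first $\omega_{1}$ coordinates is a correct and clean way to bridge the index $\mathfrak{c}$ of the corollary to the index $\omega_{1}$ of Proposition~\ref{p6}: the projection is a continuous surjection of the big $\Sigma$-product onto $\Sigma\Pi_{\alpha<\omega_{1}}\mathbb{T}$, and separability passes through continuous surjections. (Alternatively, the proof of Proposition~\ref{p6} works verbatim for any uncountable index set, so the projection step could even be skipped.) This argument is more elementary than the paper's, since it bypasses the structure theory of compact connected abelian groups (the reduction to a torus via \cite[Corollary 8.18]{HM1998}) on which Theorem~\ref{t444444} rests, and it yields more information, namely an explicit witness to the failure of dense-subgroup-separability. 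What the paper's route buys in exchange is generality: the same theorem disposes of every non-metrizable compact connected abelian group at once, not just the torus $\mathbb{T}^{\mathfrak{c}}$.
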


However, the following two questions are still unknown for us.

\begin{question}
If $G$ is a dense-subgroup-separable subgroup of $\mathbb{T}^{\mathfrak{c}}$, is $G$ hereditarily separable?
\end{question}

\begin{question}
How to characterize the subgroup $G$ of $\mathbb{T}^{\mathfrak{c}}$ such that $G$ is dense-subgroup-separable?
\end{question}

\begin{proposition}
Assume $G$ is a subgroup of $\mathbb{T}^{\mathfrak{c}}$ such that $G$ is dense-subgroup-separable and each uncountable subgroup of $G$ is dense in $G$. If $A$ is a subgroup of $\mathbb{T}^{\mathfrak{c}}$ such that $|A|<|G|$, then $H=AG$ is dense-subgroup-separable.
\end{proposition}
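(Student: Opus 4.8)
The plan is to verify the defining property directly: take an arbitrary dense subgroup $L$ of $H=AG$ and manufacture a countable dense subset of $L$. If $L$ is countable there is nothing to do, so I would assume $L$ uncountable. The basic structural input is that $H/G=AG/G\cong A/(A\cap G)$ has cardinality at most $|A|<|G|$, so $H$ is ``thin'' over $G$. Writing $N=L\cap G$, the canonical injection $L/N\hookrightarrow H/G$ gives $|L/N|\le|A|$, whence from $|L|=|N|\cdot|L/N|$ one sees that $N$ is uncountable provided $|L|>|A|$. This splits the problem into the $G$-direction, governed by the two hypotheses on $G$, and the transverse $A$-direction, governed by the cardinality bound $|A|<|G|$.

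For the $G$-direction I would first show that $N=L\cap G$ is dense in $G$. A subgroup of $G$ that fails to be dense has a proper closed subgroup of $G$ as its closure; since every uncountable subgroup of $G$ is dense by hypothesis, any proper closed subgroup of $G$ is countable. Hence a non-dense $N$ must be countable, while an uncountable $N$ is automatically dense; in the principal case $|L|>|A|$ the previous paragraph forces $N$ uncountable, hence dense. Once $N$ is dense in $G$, the dense-subgroup-separability of $G$ makes $N$ separable, so I can fix a countable subgroup $C_{0}\le N\le L$ that is dense in $N$, and therefore in $G$. Taking closures in $\mathbb{T}^{\mathfrak{c}}$ then yields $\overline{C_{0}}\supseteq\overline{G}\supseteq G$: a single countable subgroup of $L$ whose closure already absorbs all of $G$.

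The remaining, and in my view decisive, step is to control the cosets of $N$ in $L$. Choosing a transversal $T\subseteq L$ for $L/N$, so that $L=\bigcup_{t\in T}(t+N)$, I would set $E=\bigcup_{t\in T}(t+C_{0})\subseteq L$. Because translations are homeomorphisms, $\overline{E}\supseteq\bigcup_{t\in T}(t+\overline{C_{0}})\supseteq\bigcup_{t\in T}(t+G)=T+G\supseteq L$, so $E$ is dense in $L$ and $L$ is separable as soon as $E$ is countable, i.e. as soon as $T$ can be chosen countable. This is exactly the main obstacle, since a priori $|T|=|L/N|\le|A|$ need not be countable. To resolve it I would analyse the image of $L$ in the Hausdorff quotient $\mathbb{T}^{\mathfrak{c}}/\overline{G}$, a compact abelian group of weight at most $\mathfrak{c}$ and hence separable; the image of $L$ there is a quotient of $L/N$, and the goal is to show this transverse image is countable (equivalently separable) once the $G$-part has been fully absorbed into $\overline{C_{0}}$. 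I expect the delicate point to be precisely excluding an uncountable, $\Sigma$-product-like spreading of $L$ across the transverse coordinates, and this is where the hypothesis $|A|<|G|$ should enter essentially, in the spirit of Proposition~\ref{p6}.
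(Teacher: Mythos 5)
Your outline reproduces the skeleton of the paper's own proof. The paper takes a dense subgroup $C$ of $H$, uses a pigeonhole over representations $c=ag$ to find $a\in A$ and an uncountable $B\subseteq G$ with $aB\subseteq C$, concludes that $B^{-1}B\subseteq C\cap G$ is uncountable, hence dense in $G$ by hypothesis, hence separable by dense-subgroup-separability of $G$ --- exactly your chain $N=L\cap G$ uncountable $\Rightarrow$ dense $\Rightarrow$ separable, obtained instead from the index bound $|L/N|\le|A/(A\cap G)|$. The difference is that you stop honestly where the argument actually stops. Two steps of your plan are missing, not merely sketched: (i) you get $N$ uncountable only in your ``principal case'' $|L|>|A|$, and you never treat uncountable $L$ with $|L|\le|A|$, a case that is nonvacuous since the hypotheses permit $A$ to be uncountable; and (ii) the ``decisive step'' --- that the transverse part $L/N$ (equivalently, the image of $L$ in $\mathbb{T}^{\mathfrak{c}}/\overline{G}$) is countable, so that your transversal $T$ may be chosen countable --- is stated as a goal, with no argument. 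What you actually prove is the case $|A|\le\omega$: there $|L/N|\le\omega$, a countable transversal exists, and your set $E=T+C_{0}$ is a countable dense subset of $L$.

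The second gap cannot be filled, because the $\Sigma$-product-like spreading you hoped to exclude really occurs. Assume $\neg$CH. Let $G=\mathbb{T}\times\{0\}^{\mathfrak{c}\setminus\{0\}}$ be the first-coordinate circle: it is hereditarily separable (hence dense-subgroup-separable), every uncountable subgroup of it is dense in it, and $|G|=\mathfrak{c}$. Let $A$ consist of all finitely supported points of $\mathbb{T}^{\mathfrak{c}}$ with coordinates in $\{0,1/2\}$ and support contained in $[1,\omega_{1})$; this is a subgroup with $|A|=\omega_{1}<\mathfrak{c}=|G|$. Then $H=A+G$ is topologically the product of $\mathbb{T}$ with the $\sigma$-product of $\omega_{1}$ copies of $Z(2)$; that $\sigma$-product is dense in the corresponding $\Sigma$-product, which is not separable by Proposition~\ref{p6}, so $H$ itself is not separable. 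Since $H$ is a dense subgroup of itself, $H$ is not dense-subgroup-separable, and the statement fails for uncountable $A$. So your missing step is not delicate but impossible; the hypothesis $|A|<|G|$ cannot ``enter essentially'' anywhere. Note that the paper's proof is defective at exactly the two points you isolated: its pigeonhole requires $|C|>|A|$ rather than mere uncountability of $C$, and its concluding ``hence $C$ is separable'' silently assumes that $C$ meets only countably many cosets of $G$. Your write-up, though incomplete, at least makes visible where the proof --- and in fact the statement itself --- breaks down.
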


\begin{proof}
Let $C$ be any dense subgroup of $H$. If $C$ is countable, we are done. Now let $C$ be uncountable. Clearly, we can take $a\in A$ and uncountable subset $B$ of $G$ such that $aB\subset C$. Then $B^{-1}B\subset G\cap C$, hence $G\cap C$ is uncountable. By our assumption, $G\cap C$ is a dense subgroup in $G$. Since $G$ is dense-subgroup-separable subgroup, $G\cap C$ is separable, hence $C$ is separable. Therefore, $H=AG$ is dense-subgroup-separable.
\end{proof}

By a similar proof of Theorem~\ref{t44}, we have the following theorem.

\begin{theorem}\label{tt100}
Suppose that $\{G_{n}: n\in\mathbb{N}\}$ is a sequence of semitopological groups  such that if $B$ is a finite subset of $\mathbb{N}$, then $\prod_{i\in B}G_{i}$ is dense-subgroup-separable. Then $\prod_{i\in\mathbb{N}}G_{i}$ is dense-subgroup-separable.
\end{theorem}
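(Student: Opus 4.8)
The plan is to mimic the proof of Theorem~\ref{t44}, replacing dense subsets by dense subgroups throughout and exploiting that coordinate projections of a topological product are continuous surjective homomorphisms. Fix an arbitrary dense subgroup $H$ of $\prod_{i\in\mathbb{N}}G_{i}$; it suffices to produce a countable dense subset of $H$, since separability of every dense subgroup is exactly dense-subgroup-separability.

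First I would, for each finite subset $B$ of $\mathbb{N}$, consider the projection $\pi_{B}\colon\prod_{i\in\mathbb{N}}G_{i}\to\prod_{i\in B}G_{i}$. Being a continuous surjective homomorphism, $\pi_{B}$ carries the dense subgroup $H$ to a subgroup $\pi_{B}(H)$ that is again dense in $\prod_{i\in B}G_{i}$ (density follows from $\pi_{B}(\overline{H})\subset\overline{\pi_{B}(H)}$ together with $\overline{H}=\prod_{i\in\mathbb{N}}G_{i}$). By hypothesis $\prod_{i\in B}G_{i}$ is dense-subgroup-separable, so the dense subgroup $\pi_{B}(H)$ is separable; I would choose a countable dense subset $S_{B}$ of $\pi_{B}(H)$ and, selecting one preimage in $H$ for each of its points, obtain a countable set $H_{B}\subset H$ with $\pi_{B}(H_{B})=S_{B}$ dense in $\pi_{B}(H)$.

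Next I would set $A=\bigcup\{H_{B}: B\ \text{a finite subset of}\ \mathbb{N}\}$, which is a countable union of countable sets and hence countable, and verify that $A$ is dense in $H$. The key observation is that every basic open neighbourhood $U$ of a point $h\in H$ constrains only finitely many coordinates, say those in a finite set $B$, so $U=\pi_{B}^{-1}(\pi_{B}(U))$; since $\pi_{B}(h)\in\pi_{B}(H)$ and $\pi_{B}(H_{B})$ is dense in $\pi_{B}(H)$, some $h'\in H_{B}$ satisfies $\pi_{B}(h')\in\pi_{B}(U)$, whence $h'\in U\cap A$. Thus $A$ is a countable dense subset of $H$, and as $H$ was an arbitrary dense subgroup, $\prod_{i\in\mathbb{N}}G_{i}$ is dense-subgroup-separable.

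The argument is essentially routine once it is phrased this way; the only point that genuinely uses the product structure—and therefore the place I would be most careful—is the gluing step, where the finite-support nature of basic open sets in the Tychonoff product lets the finitely many \emph{local} density statements ($\pi_{B}(H_{B})$ dense in $\pi_{B}(H)$) assemble into global density of $A$. Everything else reduces to the elementary facts that homomorphic images of subgroups are subgroups and that continuous surjections send dense sets to dense sets.
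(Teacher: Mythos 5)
Your proposal is correct and is essentially the proof the paper intends: the paper proves Theorem~\ref{tt100} by adapting the argument of Theorem~\ref{t44} (project to finite subproducts, use the hypothesis there, lift countable dense sets back, and glue via the finite-support nature of basic open sets), which is exactly what you do. Your extra observation that $\pi_{B}(H)$ is a dense \emph{subgroup} (so that dense-subgroup-separability of the finite product applies) is precisely the one point where the adaptation needs care, and you handle it correctly.
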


The following question is still unknown for us.

\begin{question}
Let $G$ and $H$ be two dense-subgroup-separable semitopological groups. Is $G\times H$ dense-subgroup-separable? What if $G$ and $H$ are topological groups?
\end{question}

 \maketitle
\section{Some applications of dense-separability in the $d$-independent of semitopological groups}
In this section, we discuss some applications of the dense-separability in $d$-independent topological groups and related structures. We start with the following obvious proposition.

\begin{proposition}\label{2022p4}
Every $d$-independent semitopological abelian group is algebraically an $M$-group.
\end{proposition}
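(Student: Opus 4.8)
The plan is to reduce the statement to the purely algebraic characterization of $M$-groups recorded in the excerpt: by \cite[Proposition 1.4]{DS2016}, an abelian group $G$ is an $M$-group if and only if, for every integer $m\geq 1$, either $|mG|=1$ or $|mG|\geq\mathfrak{c}$. So it suffices to verify this dichotomy for each $m$ under the hypothesis that $G$ is $d$-independent, and everything reduces to the step where the density built into $d$-independence is converted into an algebraic constraint on $mG$.

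I would argue by contradiction. Suppose some $m\geq 1$ violates the dichotomy, that is, $1<|mG|<\mathfrak{c}$. Since $mG$ is a subgroup of $G$ with $|mG|<\mathfrak{c}$, the definition of $d$-independence, applied with $S=mG$, furnishes a countable dense subgroup $H$ of $G$ with $H\cap mG=\{e\}$. For every $h\in H$ one has $mh\in H$ (because $H$ is a subgroup) and $mh\in mG$, hence $mh\in H\cap mG=\{e\}$; therefore $mH=\{e\}$, i.e. $H\subseteq G[m]$.

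The conclusion I am after is $mG=\{e\}$, which contradicts $|mG|>1$. To reach it I would show that the subgroup $G[m]=\{g\in G: mg=e\}$ is closed and then invoke density: a closed subgroup containing the dense subgroup $H$ must satisfy $G[m]\supseteq\overline{H}=G$, forcing $mG=\{e\}$. The closedness of $G[m]$ is exactly the preimage of the closed singleton $\{e\}$ (recall all spaces are Hausdorff) under the endomorphism $g\mapsto mg$, so once that map is continuous the argument closes.

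I expect the only genuinely delicate point to be precisely this continuity of the ``multiplication by $m$'' endomorphism $g\mapsto mg=g+\cdots+g$, equivalently the closedness of $G[m]$: in a \emph{semitopological} group addition is only separately continuous, so the map $g\mapsto mg$ need not be continuous a priori, whereas for topological and paratopological groups (where addition is jointly continuous) it is automatic. My plan for the general case is therefore to isolate this as the one lemma requiring proof, namely that a \emph{dense} subgroup annihilated by $m$ forces the whole group to be annihilated by $m$; granting that, the chain $mG\neq\{e\}\Rightarrow |mG|\geq\mathfrak{c}$ follows immediately, and the proposition reduces to the bookkeeping above, which is why it can reasonably be called obvious.
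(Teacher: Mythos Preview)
Your argument is exactly what the paper must intend: the paper gives no proof at all, calling the proposition ``obvious'', and the route through the characterization from \cite[Proposition~1.4]{DS2016} with $S=mG$ is the natural reading. For paratopological (hence also topological) abelian groups your proof is complete, since there the map $g\mapsto mg$ is continuous and $G[m]$ is the preimage of the closed point $\{e\}$.

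You are also right that the semitopological case is the one genuine subtlety, and you have not closed it. In a semitopological group only the translations $x\mapsto x+a$ are assumed continuous; the map $g\mapsto mg$ factors through the diagonal and the addition, and the latter is merely separately continuous, so $G[m]$ need not be closed. Your proposal isolates the missing lemma (``a dense subgroup annihilated by $m$ forces $mG=\{e\}$'') but leaves it unproved, and I do not see a way to extract it from separate continuity alone. The paper's ``obvious'' glosses over exactly this point. It is worth noting that every use of Proposition~\ref{2022p4} in the paper (in Theorem~\ref{2022t23} and in the theorem on torsion $p$-groups that follows) is for \emph{paratopological} groups, so the gap has no downstream consequences there; but as stated for semitopological groups, neither your write-up nor the paper supplies a complete justification.
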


An {\it almost topological group} \cite{e41} is a paratopological group $(G,\tau)$ which satisfies the following conditions:

\smallskip
(a) the group $G$ admits a Hausdorff topological group topology $\gamma$ weaker than $\tau$, and

\smallskip
(b) there exists a local base $\mathcal{B}$ at the neutral element $e$ of paratopological group $(G,\tau)$ such that the set $V=U\setminus{e}$ is open in $(G,\gamma)$ for each $U\in\mathcal{B}$. \\
Moreover, we will say that $G$ is an {\it almost topological group with structure $(\tau, \gamma, \mathscr{B})$}.

\begin{proposition}\label{2022p0}
Let $G$ be an almost topological group with structure $(\tau, \gamma, \mathscr{B})$. Then $(G, \gamma)$ is $d$-independent iff $(G, \tau)$ is $d$-independent.
\end{proposition}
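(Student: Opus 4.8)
The plan is to reduce the biconditional to the single observation that $\tau$ and $\gamma$ have exactly the same dense subsets, and then to note that every remaining ingredient of $d$-independence is purely algebraic. Unravelling the definition, $(G,\tau)$ is $d$-independent iff for each subgroup $S\leq G$ with $|S|<\mathfrak{c}$ there is a countable \emph{$\tau$-dense} subgroup $H$ with $S\cap H=\{e\}$, and likewise for $\gamma$ with ``$\tau$-dense'' replaced by ``$\gamma$-dense''. The clauses ``$S\leq G$'', ``$|S|<\mathfrak{c}$'', ``$H\leq G$ countable'', and ``$S\cap H=\{e\}$'' depend only on the abstract group $G$, which is common to both topologies. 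Hence, once I show that a subgroup (indeed, an arbitrary subset) of $G$ is $\tau$-dense iff it is $\gamma$-dense, the two $d$-independence conditions become verbatim the same condition, and the proposition follows.

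So the heart of the matter is this density lemma, which I would prove in two directions. The easy direction uses only (a): since $\gamma$ is weaker than $\tau$, every nonempty $\gamma$-open set is in particular $\tau$-open, so a set meeting all nonempty $\tau$-open sets meets all nonempty $\gamma$-open sets; thus $\tau$-dense implies $\gamma$-dense. For the reverse direction I would use (b) essentially. Let $A\subseteq G$ be $\gamma$-dense and let $W$ be an arbitrary nonempty $\tau$-open set; pick $w\in W$. As $(G,\tau)$ is a paratopological group, left translation by $w$ is a $\tau$-homeomorphism, so $\{wU:U\in\mathscr{B}\}$ is a local base at $w$, and I may fix $U\in\mathscr{B}$ with $wU\subseteq W$. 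Then $wU\setminus\{w\}=w(U\setminus\{e\})$; by (b) the set $U\setminus\{e\}$ is $\gamma$-open, and since $\gamma$ is a group topology its translate $w(U\setminus\{e\})$ is $\gamma$-open as well. Provided this punctured neighbourhood is nonempty, $\gamma$-density of $A$ yields a point of $A$ inside $w(U\setminus\{e\})\subseteq W$, so $A\cap W\neq\emptyset$; as $W$ was arbitrary, $A$ is $\tau$-dense.

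The one place demanding care --- and the step I expect to be the main (if mild) obstacle --- is the nonemptiness of $U\setminus\{e\}$ invoked in the reverse direction. This holds exactly when $e$ is not $\tau$-isolated, i.e. when $(G,\tau)$ is non-discrete, for then no member of $\mathscr{B}$ can reduce to the singleton $\{e\}$; and this is automatic for the non-degenerate almost topological groups of interest, where $\tau$ strictly refines a non-discrete $\gamma$. Granting this, I would conclude in one line: $G$ carries the same countable dense subgroups in $\tau$ as in $\gamma$, so the defining property of $d$-independence holds for $(G,\tau)$ precisely when it holds for $(G,\gamma)$, as required.
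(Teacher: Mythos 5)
Your argument is essentially the paper's. Both proofs dispose of the direction ($\tau$-$d$-independent $\Rightarrow$ $\gamma$-$d$-independent) by noting that $\gamma\subseteq\tau$ makes every $\tau$-dense subgroup $\gamma$-dense, and both reduce the converse to the single fact that a $\gamma$-dense subgroup is $\tau$-dense; the paper outsources that fact to the proof of \cite[Theorem 4.4]{LLL2015}, whereas you prove it inline via the punctured translates $w(U\setminus\{e\})$ with $U\in\mathscr{B}$, which is exactly the mechanism behind that citation. So your write-up is, if anything, the more self-contained of the two.

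The one genuinely incorrect assertion sits in the step you yourself flag as the crux. You need $U\setminus\{e\}\neq\emptyset$ for $U\in\mathscr{B}$, i.e.\ that $(G,\tau)$ is non-discrete, and you claim this is ``automatic'' because $\tau$ strictly refines a non-discrete $\gamma$. That implication is false: the discrete topology $\tau$ on $\mathbb{R}$ strictly refines the usual topology $\gamma$, and the triple $(\tau,\gamma,\{\{0\}\})$ formally satisfies conditions (a) and (b), since $\{0\}\setminus\{0\}=\emptyset$ is $\gamma$-open. Worse, on this structure the proposition itself fails: $(\mathbb{R},\gamma)$ is $d$-independent (this is how the paper treats the Sorgenfrey line in Example~\ref{2022e0}, citing \cite[Theorem 5.2]{RT2021}), while a nontrivial discrete group is never $d$-independent (dense means everything, so taking $S$ to be a countable dense subgroup forces $G=\{e\}$). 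Hence non-discreteness of $\tau$ is an implicit hypothesis that cannot be derived from the definition of an almost topological group; your proof --- like the paper's, which passes over the point silently inside the citation --- is valid exactly when $\tau$ is non-discrete, and the honest fix is to state that assumption (harmless in all intended examples) rather than to claim it follows from the structure.
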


\begin{proof}
The sufficiency is obvious. It suffices to consider the necessity. Suppose $(G, \gamma)$ is $d$-independent. Take any subgroup $H$ of $(G, \tau)$ with $|H|<\mathfrak{c}$. Since $(G, \gamma)$ is $d$-independent, there exists a countable dense subgroup $D$ of $(G, \gamma)$ such that $D\cap H=\{e\}$. By the proof of \cite[Theorem 4.4]{LLL2015} that $D$ is a dense in $(G, \tau)$. Therefore, $(G, \tau)$ is $d$-independent.
\end{proof}

\begin{proposition}
Let $G$ be a semitopological group. If $K$ is a dense subgroup of $G$ such that $K$ is $d$-independent, then $G$ is $d$-independent.
\end{proposition}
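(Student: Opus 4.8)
The plan is to reduce the $d$-independence of $G$ to that of $K$ by intersecting the given ``small'' subgroup with $K$ and then invoking the transitivity of density. First I would record that $G$ is nontrivial: since $K$ is $d$-independent, $K$ is by definition a nontrivial semitopological group, and $K\subseteq G$ forces $G$ to be nontrivial as well. Now fix an arbitrary subgroup $S$ of $G$ with $|S|<\mathfrak{c}$; the goal is to produce a countable dense subgroup $H$ of $G$ satisfying $S\cap H=\{e\}$.

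The key reduction is to pass to $S'=S\cap K$, which is a subgroup of $K$ with $|S'|\leq|S|<\mathfrak{c}$. Applying the $d$-independence of $K$ to the subgroup $S'$ yields a countable dense subgroup $H$ of $K$ with $S'\cap H=\{e\}$. Since $H\subseteq K$, we compute $S\cap H=S\cap(K\cap H)=(S\cap K)\cap H=S'\cap H=\{e\}$, which is precisely the independence condition demanded inside $G$. Note that the countability of $H$ comes for free from the definition of $d$-independence applied in $K$.

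It remains to verify that $H$, being dense in $K$, is actually dense in $G$. This is the transitivity of density: $H$ dense in $K$ gives $K\subseteq\overline{H}$ (all closures taken in $G$), and $K$ dense in $G$ gives $\overline{K}=G$; hence $G=\overline{K}\subseteq\overline{H}$, so $\overline{H}=G$. Therefore $H$ is a countable dense subgroup of $G$ meeting $S$ only in $\{e\}$, and as $S$ was arbitrary, $G$ is $d$-independent. I expect no serious obstacle here; the only point requiring (routine) care is this transitivity of density, which uses nothing beyond the subspace topology on $K$ and monotonicity of closure in $G$.
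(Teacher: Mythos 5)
Your proposal is correct and follows essentially the same route as the paper's proof: intersect the given small subgroup $S$ with $K$, apply $d$-independence of $K$ to $S\cap K$ to get a countable dense subgroup $H$ of $K$, and conclude via transitivity of density that $H$ is dense in $G$ with $H\cap S=\{e\}$. The only difference is that you spell out the details (the computation $S\cap H=(S\cap K)\cap H$ and the closure argument) that the paper dismisses as obvious.
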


\begin{proof}
Let $S$ be an arbitrary subgroup of $G$ with $|S|<\mathfrak{c}$. Then $S_{K}=K\cap S$ is a subgroup of $K$. Clearly, $|S_{K}|<\mathfrak{c}$, hence there exists a countable dense subgroup $H$ of $K$ such that $H\cap S_{K}=\{e\}$. Obviously, $H$ is dense in $G$ and $H\cap S=\{e\}$. Therefore, $G$ is $d$-independent.
\end{proof}

\begin{remark}
The following Examples~\ref{2022e0} and~\ref{2022e1} show that there exist $d$-independent paratopological groups which are not topological groups. First, we give some lemmas.
\end{remark}

\begin{lemma}\label{2022l0}
Let $G$ be a finite discrete topological group and $\kappa$ be a cardinal satisfying $\omega\leq\kappa\leq\mathfrak{c}$. Then the compact group $G^{\kappa}$ has a countable $\pi$-network $\mathcal{W}$  such that  each $W\in\mathcal{W}$ contains a copy of $G^{\omega}$.
\end{lemma}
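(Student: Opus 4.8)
The plan is to exploit that $G$ is finite and discrete, so that the clopen ``cylinders''
$$[s]=\{x\in G^{\kappa}: x\uhr F=s\},\qquad s\colon F\to G,\ F\subseteq\kappa\ \text{finite},$$
form a base of $G^{\kappa}$. Consequently a family $\mathcal{W}$ is a $\pi$-network as soon as every cylinder $[s]$ contains some member of $\mathcal{W}$, so it suffices to produce countably many copies of $G^{\omega}$ that are cofinal from below among the cylinders. (The degenerate case $|G|\le 1$ is trivial, since then $G^{\kappa}$ is a single point, so I may assume $|G|\ge 2$.)

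First I would fix the ``varying blocks''. Since $\kappa\ge\omega$, choose a countable family $\{C_{j}: j\in\omega\}$ of pairwise disjoint infinite subsets of $\kappa$. The role of $C_{j}$ is to carry a free copy of $G^{\omega}$, while pairwise disjointness guarantees that every finite set meets only finitely many of the $C_{j}$. Next, for each $j$ the space $G^{\kappa\setminus C_{j}}$ is a product of at most $\mathfrak{c}$ many finite discrete (hence separable) spaces, so it is separable by \cite[Theorem 2.3.15]{E1989}; fix a countable dense set $D_{j}=\{d_{j,i}: i\in\omega\}\subseteq G^{\kappa\setminus C_{j}}$. I then set
$$W_{j,i}=\{x\in G^{\kappa}: x\uhr(\kappa\setminus C_{j})=d_{j,i}\}.$$
The restriction map $x\mapsto x\uhr C_{j}$ is a homeomorphism of $W_{j,i}$ onto $G^{C_{j}}\cong G^{\omega}$, so each $W_{j,i}$ is a copy of $G^{\omega}$, and $\mathcal{W}=\{W_{j,i}: j,i\in\omega\}$ is countable.

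It remains to check the $\pi$-network property. Given a cylinder $[s]$ with $s\colon F\to G$, I would pick $j$ with $C_{j}\cap F=\emptyset$ (possible since $F$ is finite and the $C_{j}$ are pairwise disjoint). Then $F\subseteq\kappa\setminus C_{j}$, and since $D_{j}$ is dense, some $d_{j,i}$ satisfies $d_{j,i}\uhr F=s$; hence every point of $W_{j,i}$ agrees with $s$ on $F$, that is, $W_{j,i}\subseteq[s]$. The point I expect to be the real obstacle is reconciling the two competing demands on a member $W$: it must be \emph{fat} (it contains a copy of $G^{\omega}$, so infinitely many coordinates run freely) yet \emph{thin} (it must sit inside cylinders that constrain arbitrary finite coordinate sets). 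A single fixed block of free coordinates cannot dodge every finite $F$, and the obvious fat set $G^{\omega}\times\{\mathrm{pt}\}$ is far too large to fit inside a proper cylinder; the device that resolves this is precisely the pairwise disjoint family $\{C_{j}\}$, which lets the free block avoid $F$, combined with the Hewitt--Marczewski--Pondiczery separability of $G^{\kappa\setminus C_{j}}$, which pins the complementary fixed part down to a countable menu.
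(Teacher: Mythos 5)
Your proof is correct, but it follows a genuinely different route from the paper's. The paper identifies $\kappa$ with a \emph{dense} subset of the interval $(0,1)$ (this is where $\kappa\leq\mathfrak{c}$ enters), and takes as members of $\mathcal{W}$ the sets obtained by fixing the coordinates lying in a finite union $O$ of disjoint rational-endpoint intervals to a constant value on each interval, leaving all coordinates outside $O$ free; countability comes from the countable supply of rational interval configurations together with the finiteness of $G$, and the copy of $G^{\omega}$ comes from density of $\kappa$ in the nonempty interior of $(0,1)\setminus\overline{O}$. You instead fix countably many pairwise disjoint infinite blocks $C_{j}\subseteq\kappa$, pin \emph{all} coordinates outside $C_{j}$ to a point of a countable dense subset of $G^{\kappa\setminus C_{j}}$ (Hewitt--Marczewski--Pondiczery, which is where your use of $\kappa\leq\mathfrak{c}$ enters), and let the block $C_{j}$ run free; the $\pi$-network property follows because a finite set $F$ avoids some $C_{j}$ and density then matches the fixed part to any prescribed $s\colon F\to G$. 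Your argument is more modular --- it outsources the countability mechanism to the standard separability theorem \cite[Theorem 2.3.15]{E1989}, which the paper itself invokes elsewhere --- whereas the paper in effect inlines a bare-hands version of that same rational-interval device; your members are exact copies of $G^{\omega}$ with everything else pinned, while the paper's members are much larger (they constrain only the coordinates falling in $O$). Two cosmetic points: you should take the $C_{j}$ to be \emph{countably} infinite (e.g.\ by partitioning one countable subset of $\kappa$) so that $W_{j,i}\cong G^{C_{j}}\cong G^{\omega}$ is literally a homeomorphism rather than just a containment of a copy --- though containment is all the lemma asks for --- and note that for the $\pi$-network property you only need that each of the finitely many points of $F$ lies in at most one $C_{j}$, which pairwise disjointness indeed gives.
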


\begin{proof}
Let $G=\{g_{1}, \ldots, g_{N}\}$, where $N\in\mathbb{N}$. Identify $\kappa$ with a dense subset of the open interval $(0, 1)$. Assume that $\mathcal{F}$ is the countable family consisting of the sets of the form $O\cap \kappa$, where each $O$ is a disjoint union of open intervals with rational end-points in $(0, 1)$ such that $(0, 1)\setminus O$ has a nonempty interior in $(0, 1)$. For each set $\bigcup_{i=1}^{n}O_{i}\in \mathcal{F}$ and a finite collection $\{A_{1}, A_{2}\, \ldots, A_{n}\}$, where each $A_{i}\in\{\{g_{j}\}: j=1, \ldots, N\}$, put $$W(\bigcup_{i=1}^{n}O_{i}, \{A_{1}, A_{2}\, \ldots, A_{n}\})=\{x\in G^{\kappa}: x(\alpha)\in A_{i}\ \mbox{for each}\ \alpha\in O_{i}, i\leq N\}.$$It is easy to check that the family $\mathcal{W}$ consisting of all such sets $W(\bigcup_{i=1}^{n}O_{i}, \{A_{1}, A_{2}\, \ldots, A_{n}\})$ is  a countable $\pi$-network for $G^{\kappa}$. Clearly, each $W(\bigcup_{i=1}^{n}O_{i}, \{A_{1}, A_{2}\, \ldots, A_{n}\})$ contains a copy of $G^{\omega}$ since $(0, 1)\setminus \overline{\bigcup_{i=1}^{n}O_{i}}$ has a nonempty interior in $(0, 1)$.
\end{proof}

\begin{lemma}\label{2022p1}
Let $G$ be the quaternion group with discrete topology. Then $G^{\kappa}$ is $d$-independent for any $\omega\leq\kappa\leq\mathfrak{c}$.
\end{lemma}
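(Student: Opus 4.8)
The plan is to show $G^{\kappa}$ is $d$-independent directly from the definition: for an arbitrary subgroup $S$ of $G^{\kappa}$ with $|S|<\mathfrak{c}$ I would construct a countable dense subgroup $H$ with $H\cap S=\{e\}$, building $H$ as the union of an increasing chain of finite subgroups. Two structural features of the quaternion group drive everything. First, $G$ has exponent $4$, hence so does $G^{\kappa}$; by Sanov's theorem that every finitely generated group of exponent $4$ is finite, each finitely generated subgroup of $G^{\kappa}$ is finite, and therefore any countably generated subgroup is a countable union of finite groups, so countable. Second, $G$ is nilpotent of class $2$ with $[G,G]=Z(G)$ of order $2$; consequently $G^{\kappa}$ is nilpotent of class $2$ with $[G^{\kappa},G^{\kappa}]=Z:=Z(G)^{\kappa}$, a Boolean (exponent $2$) group. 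Finally, by Lemma~\ref{2022l0} I fix a countable $\pi$-network $\mathcal{W}=\{W_{n}:n\in\omega\}$ of $G^{\kappa}$ in which each $W_{n}$ contains a copy $K_{n}$ of $G^{\omega}$; realize $K_{n}$ as the points of $G^{\kappa}$ free on an infinite coordinate set $B_{n}$ and fixed off $B_{n}$ (as in the proof of Lemma~\ref{2022l0}). Since $\mathcal{W}$ is a $\pi$-network, any subgroup meeting every $W_{n}$ is automatically dense.

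First I would set up the recursion. Starting from $H_{0}=\{e\}$, at stage $n$ I choose a new generator $h_{n}\in K_{n}\subseteq W_{n}$ and put $H_{n+1}=\langle H_{n},h_{n}\rangle$, maintaining the invariants that $H_{n}$ is finite and $H_{n}\cap S=\{e\}$. Then $H=\bigcup_{n}H_{n}$ is countable by the exponent-$4$ remark, is dense because it meets every $W_{n}$, and satisfies $H\cap S=\bigcup_{n}(H_{n}\cap S)=\{e\}$. The whole argument thus reduces to the selection step: given finite $H_{n}$ with $H_{n}\cap S=\{e\}$, find $h_{n}\in K_{n}$ with $\langle H_{n},h_{n}\rangle\cap S=\{e\}$.

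The main obstacle is exactly this selection step, and it is where non-commutativity bites: adjoining a single $h_{n}$ produces not only the coset $H_{n}h_{n}$ but all group words in $H_{n}$ and $h_{n}$, and the resulting word maps $K_{n}\to G^{\kappa}$ can have fibres of full size $\mathfrak{c}$ (already $x\mapsto x^{2}$ does), so a crude count of bad choices fails. I would resolve this using the class-$2$ structure. Modulo $Z$ the group $\overline{G^{\kappa}}:=G^{\kappa}/Z$ is an $\mathbb{F}_{2}$-vector space, the commutator map $(\bar a,\bar b)\mapsto[a,b]$ is $\mathbb{F}_{2}$-bilinear into $Z$, and squaring $\bar x\mapsto x^{2}$ is a quadratic map refining it; hence every element of $\langle H_{n},h_{n}\rangle$ is a product of a fixed coset representative in $H_{n}\langle h_{n}\rangle$ with a central correction drawn from the subgroup generated by $h_{n}^{2}$ and $\{[g,h_{n}]:g\in H_{n}\}$. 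Writing $h_{n}$ through its free part $h_{n}\uhr B_{n}$, the class $\overline{h_{n}}$ and the central data $h_{n}^{2},[g,h_{n}]$ depend affinely/quadratically on $h_{n}\uhr B_{n}$ over a $\mathfrak{c}$-dimensional $\mathbb{F}_{2}$-space of choices.

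Finally I would translate the requirement $\langle H_{n},h_{n}\rangle\cap S=\{e\}$, via this decomposition, into two families of $\mathbb{F}_{2}$-affine conditions on $h_{n}\uhr B_{n}$: one forcing $\overline{h_{n}}$ outside the span of $\overline{H_{n}}\cup\overline{S}$ (so no new non-central element lands in $S$), and one forcing the newly created central elements outside $g^{-1}S\cap Z$ for each $g\in H_{n}$ (so no new element with $\overline{v}\in\overline{H_{n}}$ lands in $S$). Each family is indexed by the finite set $H_{n}$ together with $S\setminus\{e\}$, hence comprises fewer than $\mathfrak{c}$ constraints, while the choice space is $\mathfrak{c}$-dimensional; choosing $h_{n}\uhr B_{n}$ generically, exactly as one selects a vector independent from a small subspace in the Boolean $d$-independence argument and possible because $W_{n}$ carries an independent set of size $\mathfrak{c}$, satisfies all of them simultaneously. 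This gives $\langle H_{n},h_{n}\rangle\cap S=\{e\}$, completing the recursion and showing that $G^{\kappa}$ is $d$-independent.
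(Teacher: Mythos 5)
Your structural analysis of the selection step is correct, and it is more careful than what the paper itself does: the decomposition $\langle H_n,h_n\rangle=\bigcup_{g\in H_n,\,\epsilon\in\{0,1\}}g\,h_n^{\epsilon}C_n$ with $C_n=\langle h_n^{2}\rangle\cdot\langle [g,h_n]:g\in H_n\rangle$ is right, and your two families of conditions are exactly what must be arranged. The gap is the final claim that, since there are fewer than $\mathfrak{c}$ constraints and $\mathfrak{c}$ many choices of $h_n\uhr B_n$, a generic choice meets them all. That count is valid only if each single constraint excludes a proper (indeed small) part of $K_n$, and constraints from your second family need not do so: the map $h\mapsto[g,h]$ on $K_n$ is \emph{constant} (not merely a map with fibres of size $\mathfrak{c}$) whenever $g(\alpha)$ is central for every $\alpha\in B_n$, because then $[g,h]$ is computed entirely from the frozen coordinates $c_n$ of $W_n$. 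This degeneracy really occurs for the network of Lemma~\ref{2022l0}, and it kills the scheme, not just the counting. Identify $\kappa$ with a dense set of irrationals in $(0,1)$; let $W_0$ be the network member whose points equal $i$ on $(0,1/4)\cap\kappa$ and $1$ on $(1/4,1/2)\cap\kappa$ (free elsewhere), and $W_1$ the member whose points equal $j$ on $(0,1/4)\cap\kappa$ and $1$ on $(1/2,1)\cap\kappa$ (free elsewhere). The free coordinates of each lie where the other is frozen at $1$, so for \emph{all} $x\in W_0$ and $y\in W_1$ one computes $[x,y]=z^{*}$, the central element equal to $[i,j]=-1$ on $(0,1/4)\cap\kappa$ and $1$ elsewhere. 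Hence every subgroup of $G^{\kappa}$ meeting both $W_0$ and $W_1$ contains $z^{*}$, and for $S=\{e,z^{*}\}$ (a subgroup of size $2<\mathfrak{c}$) your recursion cannot pass stage $1$: whatever $h_0\in K_0$ was chosen, every $h_1\in K_1$ yields $[h_0,h_1]=z^{*}\in\langle H_1,h_1\rangle\cap S$, so no admissible $h_1$ exists.

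Note that this does not refute the lemma, only the method: for instance, the countable group of $Q_8$-valued step functions with rational breakpoints which equal $1$ near $0$ is a dense subgroup of $G^{\kappa}$ avoiding $z^{*}$, but it meets neither $W_0$ nor $W_1$. So density cannot be secured by forcing the constructed subgroup into every member of this particular $\pi$-network; any correct proof must either obtain density differently or carry an induction hypothesis that controls, in advance, the central commutators forced between network members. You should also know that the paper's own proof founders at exactly this point: it selects $w_n\in W_n$ subject only to $w_n\notin L_n$ and $w_n^{2}\notin L_n$, where $L_n=\langle H\cup\{w_0,\dots,w_{n-1}\}\rangle$, and then asserts $\langle w_0,\dots,w_n\rangle\cap H=\{e\}$ --- an assertion the same pair $W_0,W_1$ with $H=\{e,z^{*}\}$ refutes. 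In short, you reproduced the paper's approach, correctly isolated the commutator obstruction that the paper silently ignores, but your genericity argument does not overcome it, and in fact nothing within this framework can.
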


\begin{proof}
Take any subgroup $H$ of $G^{\kappa}$ such that $|H|<\mathfrak{c}$. Obviously, we have $\langle x\rangle\cap H=\{e\}$ for each $x\in G^{\kappa}\setminus H$ with $x^{2}\not\in H$. By Lemma~\ref{2022l0}, there is a countable $\pi$-network $\mathcal{W}=\{W_{n}: n\in\omega\}$ for $G^{\kappa}$ such that $W_{n}$ contains a copy of $G^{\omega}$ for each $n\in\omega$. Pick any element $w_{0}\in W_{0}\setminus H$ with $w_{0}^{2}\not\in H$. This is possible since $|H|<\mathfrak{c}$ and $W_{0}$ contains a copy of $G^{\omega}$. Then $\langle w_{0}\rangle\cap H=\{e\}.$ Similarly, pick an any element $w_{1}\in W_{1}\setminus \langle H\cup\{w_{0}\}\rangle$ such that $w_{1}^{2}\not\in \langle H\cup\{w_{0}\}\rangle$. Again, this is possible since $|\langle H\cup\{w_{0}\}\rangle|<\mathfrak{c}$ and $W_{1}$ contains a copy of $G^{\omega}$. Assume that elements $w_{0}, w_{1}, \cdots, w_{n-1}$ of $G^{\kappa}$ has been defined. Then we can pick an element $w_{n}\in W_{n}\setminus\langle H\cup\{w_{0}, w_{1}, \cdots, w_{n-1}\}\rangle$ with $w_{n}^{2}\not\in\langle H\cup\{w_{0}, w_{1}, \cdots, w_{n-1}\}\rangle$, which implies that $\langle w_{0}, w_{1}, \cdots, w_{n}\rangle\cap H=\{e\}$.

Now put $D=\{w_{n}: n\in\omega\}$ and let $S$ be the subgroup of $G^{\kappa}$ generated by $D$. Since each $w_{n}\in W_{n}$, it follows that $S$ is dense in $G$. Moreover, it is obvious that $S\cap H=\{e\}$. Therefore, $G^{\kappa}$ is $d$-independent for any $\omega\leq\kappa\leq\mathfrak{c}$.
\end{proof}

By a similar proof of \cite[Proposition 3.1]{RT2021}, we have the following lemma.

\begin{lemma}\label{2022l2}
Let $\{G_{\alpha}: \alpha\in\kappa\}$ be a family of $d$-independent semitopological groups, where $1\leq\kappa\leq\mathfrak{c}$. Then the semitopological group $G=\Pi_{\alpha\in\kappa}G_{\alpha}$ is $d$-independent.
\end{lemma}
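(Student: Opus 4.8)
The plan is to reduce the problem to a coordinatewise use of $d$-independence and then compress the resulting witness to countable size by a Hewitt--Marczewski--Pondiczery (HMP) type construction, exactly the mechanism already exploited in Lemma~\ref{2022l0} and Lemma~\ref{2022p1}. First note that $G=\Pi_{\alpha\in\kappa}G_{\alpha}$ with the product topology is again a semitopological group, that each $G_{\alpha}$ is nontrivial and separable (being $d$-independent), and hence that $G$ is separable because $\kappa\leq\mathfrak{c}$ by \cite[Theorem 2.3.15]{E1989}. Now fix an arbitrary subgroup $S$ of $G$ with $|S|<\mathfrak{c}$ and, for each $\alpha\in\kappa$, let $\pi_{\alpha}\colon G\to G_{\alpha}$ be the projection. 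Then $\pi_{\alpha}(S)$ is a subgroup of $G_{\alpha}$ with $|\pi_{\alpha}(S)|\leq|S|<\mathfrak{c}$, so applying $d$-independence of $G_{\alpha}$ to $\pi_{\alpha}(S)$ yields a countable dense subgroup $H_{\alpha}\leq G_{\alpha}$ with $H_{\alpha}\cap\pi_{\alpha}(S)=\{e\}$; fix an enumeration $H_{\alpha}=\{h_{\alpha,n}:n\in\omega\}$.

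The key observation I would record next is that the \emph{full} product $P:=\Pi_{\alpha\in\kappa}H_{\alpha}$ is a dense subgroup of $G$ with $P\cap S=\{e\}$. Density is immediate since each $H_{\alpha}$ is dense, and avoidance of $S$ is the heart of the matter: if $x\in P\cap S$, then for every $\alpha$ we have $\pi_{\alpha}(x)\in H_{\alpha}\cap\pi_{\alpha}(S)=\{e\}$, forcing $x=e$. Thus $P$ is a correct witness except that it is in general uncountable when $\kappa>\omega$.

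To repair this, I would run the HMP construction \emph{inside} $P$. Identify $\kappa$ with a subset of $2^{\omega}$ (equivalently, with a dense subset of $(0,1)$, as in Lemma~\ref{2022l0}), and for each finite partition $\{C_{0},\ldots,C_{m}\}$ of the index set into pieces cut out by a fixed countable base of clopen sets (resp.\ rational intervals) together with each finite sequence $(n_{0},\ldots,n_{m})\in\omega^{m+1}$, define the point $x\in G$ by $\pi_{\alpha}(x)=h_{\alpha,n_{j}}$ whenever $\alpha\in C_{j}$. Every such point lies in $P$, and the family $D$ of all of them is countable. Density of $D$ is the usual argument: a basic open set constrains only finitely many coordinates, these can be separated by such a partition, and since each enumeration $\{h_{\alpha,n}:n\in\omega\}$ is dense in $G_{\alpha}$, a suitable choice of the indices $n_{j}$ places $x$ in the prescribed open set. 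Setting $H:=\langle D\rangle$, countability of $D$ gives $H$ countable, $D\subseteq H$ gives $H$ dense, and $D\subseteq P$ together with $P$ being a subgroup gives $H\subseteq P$, whence $H\cap S\subseteq P\cap S=\{e\}$. Thus $H$ is the required countable dense subgroup meeting $S$ only in $e$, and $G$ is $d$-independent.

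The step I expect to be the real content is the passage from the (correct but oversized) witness $P$ to a countable dense subgroup still avoiding $S$. What makes it go through cleanly is that $P$ is not merely dense but is a \emph{subgroup} disjoint from $S$ away from $e$, so any countable dense subset of $P$ generates a subgroup trapped in $P$; the HMP scheme supplies exactly such a countable dense subset and is available precisely because $\kappa\leq\mathfrak{c}$. The remaining ingredients---separate continuity of multiplication in the product, the coordinatewise application of $d$-independence, and the triviality of $P\cap S$---are routine.
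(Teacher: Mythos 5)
Your proof is correct and is essentially the argument the paper intends: the paper establishes this lemma only by deferring to the proof of \cite[Proposition 3.1]{RT2021}, which is precisely your scheme of projecting $S$ to each factor, taking the product $P=\Pi_{\alpha\in\kappa}H_{\alpha}$ of the coordinatewise witnesses (a dense subgroup of $G$ meeting $S$ only in $e$), and then extracting a countable dense subset of $P$ whose generated subgroup remains inside $P$. The only streamlining available is that your explicit Hewitt--Marczewski--Pondiczery construction can be replaced by a direct citation of \cite[Theorem 2.3.15]{E1989}: $P$ is a product of at most $\mathfrak{c}$ separable spaces, hence separable, and any countable dense subset of $P$ generates the required countable dense subgroup avoiding $S$.
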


\begin{example}\label{2022e0}
There exists a $d$-independent paratopological abelian group $G$ which is not a topological group.
\end{example}

\begin{proof}
Let $(G, \tau)$ be the Sorgenfrey line. Then $(G, \tau)$ is an almost topological group which is not a topological group. Clearly, the finest Hausdorff topological group topology $\gamma$ which is coarser than $\tau$ is the usual topology on $\mathbb{R}$. By \cite[Theorem 5.2]{RT2021}, $(G, \gamma)$ is $d$-independent. Then it follows from Proposition~\ref{2022p0} that $(G, \tau)$ is $d$-independent.
\end{proof}

\begin{example}\label{2022e1}
There exists a $d$-independent paratopological non-abelian group $G$ which is not a topological group.
\end{example}

\begin{proof}
Let $G$ be the Sorgenfrey line in Example~\ref{2022e0} and $H$ be the quaternion group with discrete topology. Then $G\times H^{\kappa}$ is a non-abelian paratopological group which is not a topological group, where $\omega\leq\kappa\leq\mathfrak{c}$. From Lemma~\ref{2022p1}, Lemma~\ref{2022l2}and the proof of Example~\ref{2022e0}, it follow that $G\times H^{\kappa}$ is $d$-independent.
\end{proof}

\begin{proposition}\label{2022p3}
A cosmic semitopological group $G$ with $|G|=\mathfrak{c}$ is $d$-independent iff $G$ is maximally fragmentable.
\end{proposition}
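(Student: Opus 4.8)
The plan is to prove both implications separately, using only two ingredients beyond the definitions: first, that a cosmic space is hereditarily separable (a countable network is inherited by every subspace, and a countable network yields a countable dense set); second, the standard algebraic fact that a family of subgroups $\{D_{\alpha}\}$ satisfies $D_{\alpha}\cap\langle\bigcup_{\beta\neq\alpha}D_{\beta}\rangle=\{e\}$ for every $\alpha$ exactly when $\langle\bigcup_{\alpha}D_{\alpha}\rangle$ is their internal direct sum $\bigoplus_{\alpha}D_{\alpha}$, so that every element of this subgroup has finite support.

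For the implication that maximal fragmentability implies $d$-independence, I would fix an independent family $\{D_{\alpha}:\alpha<\mathfrak{c}\}$ of dense subgroups and write $K=\langle\bigcup_{\alpha<\mathfrak{c}}D_{\alpha}\rangle=\bigoplus_{\alpha<\mathfrak{c}}D_{\alpha}$. Given a subgroup $S$ with $|S|<\mathfrak{c}$, I consider $S\cap K$ and collect the finite supports of all its elements into a set $T\subseteq\mathfrak{c}$; since $|S\cap K|\le|S|<\mathfrak{c}$ and each support is finite, $|T|\le\max(|S|,\omega)<\mathfrak{c}$, so I may pick $\alpha\in\mathfrak{c}\setminus T$. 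By the direct-sum structure $D_{\alpha}\cap(S\cap K)=\{e\}$, because a nontrivial element of $D_{\alpha}$ has support exactly $\{\alpha\}\not\subseteq T$. Now $D_{\alpha}$ is a dense subspace of the cosmic (hence hereditarily separable) group $G$, so it is separable; choosing a countable dense subset of $D_{\alpha}$ and letting $H$ be the subgroup it generates produces a countable subgroup $H\subseteq D_{\alpha}\subseteq K$ that is dense in $D_{\alpha}$, and therefore dense in $G$, with $H\cap S=H\cap(S\cap K)\subseteq D_{\alpha}\cap(S\cap K)=\{e\}$. This $H$ is precisely the witness required by $d$-independence.

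For the converse, assuming $G$ is $d$-independent, I would construct the family by transfinite recursion of length $\mathfrak{c}$. At stage $\alpha$, having chosen countable dense subgroups $\{D_{\beta}:\beta<\alpha\}$, set $S_{\alpha}=\langle\bigcup_{\beta<\alpha}D_{\beta}\rangle$; since each $D_{\beta}$ is countable and $\alpha<\mathfrak{c}$, we have $|S_{\alpha}|\le\max(|\alpha|,\omega)<\mathfrak{c}$, so $d$-independence applied to $S_{\alpha}$ yields a countable dense subgroup $D_{\alpha}$ with $D_{\alpha}\cap S_{\alpha}=\{e\}$. Because the independence condition is finitary, the relation $D_{\alpha}\cap\langle\bigcup_{\beta<\alpha}D_{\beta}\rangle=\{e\}$ together with the inductive hypothesis keeps the enlarged family independent, and independence is preserved at limit stages automatically. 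The resulting subgroups $\{D_{\alpha}:\alpha<\mathfrak{c}\}$ are nontrivial and pairwise distinct (distinctness follows from independence), so they witness that $G$, with $|G|=\mathfrak{c}$, is maximally fragmentable.

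The routine points—separability of the subspaces $D_{\alpha}$ and the cardinal bound $|S_{\alpha}|<\mathfrak{c}$—should cause no difficulty. The step I expect to require the most care is the bookkeeping that converts the stagewise trivial-intersection conditions into the global condition of the definition: one must verify that $D_{\alpha}\cap\langle\bigcup_{\beta<\alpha}D_{\beta}\rangle=\{e\}$ holding for all $\alpha$ is genuinely equivalent to $D_{\alpha}\cap\langle\bigcup_{\beta\neq\alpha}D_{\beta}\rangle=\{e\}$ holding for all $\alpha$, which is exactly where the finite-support nature of the internal direct sum must be invoked, and symmetrically, in the first implication, that the support set $T$ really has cardinality below $\mathfrak{c}$.
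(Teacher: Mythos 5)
Your overall architecture (a transfinite recursion for one direction, a counting argument for the other, and cosmic $\Rightarrow$ dense subspaces are separable) is sound, and your argument is correct for \emph{abelian} $G$; but the algebraic pillar you rest both directions on is false at the stated level of generality, and that is a genuine gap. The proposition concerns an arbitrary cosmic \emph{semitopological group}: no commutativity is assumed. Your ``standard algebraic fact'' --- that $D_{\alpha}\cap\langle\bigcup_{\beta\neq\alpha}D_{\beta}\rangle=\{e\}$ for all $\alpha$ is equivalent to $\langle\bigcup_{\alpha}D_{\alpha}\rangle$ being the internal direct sum $\bigoplus_{\alpha}D_{\alpha}$ --- is true for abelian groups (or for normal subgroups), but fails in general: in the free group $F_{2}=\langle a\rangle\ast\langle b\rangle$ the two factors satisfy the trivial-intersection conditions yet generate a free product, not $\mathbb{Z}^{2}$; in the discrete Heisenberg group the subgroups $\langle x\rangle$ and $\langle y\rangle$ intersect trivially but generate a nonabelian group in which the commutator $[x,y]$ admits no well-defined ``support.'' Consequently, in your first implication the set $T$ of supports of elements of $S\cap K$ is simply not defined when $G$ is non-abelian, and in your second implication the upgrade from the stagewise condition $D_{\alpha}\cap\langle\bigcup_{\beta<\alpha}D_{\beta}\rangle=\{e\}$ to the global independence condition genuinely fails without commutativity: in the Heisenberg group the three subgroups $\langle [x,y]\rangle$, $\langle x\rangle$, $\langle y\rangle$, taken in that order, satisfy all the stagewise conditions while the global condition fails for $\langle [x,y]\rangle$.

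The gap is avoidable --- and is avoided by the paper --- in the direction ``maximally fragmentable $\Rightarrow$ $d$-independent.'' The paper never forms supports: it replaces each $D_{\alpha}$ by the subgroup $\langle S_{\alpha}\rangle$ generated by a countable dense subset of $D_{\alpha}$ (using that $G$ is cosmic, exactly as you do), and then, given a subgroup $H$ with $|H|<\mathfrak{c}$, observes that if every $H_{\alpha}=H\cap\langle S_{\alpha}\rangle$ were nontrivial, then choosing a nontrivial $h_{\alpha}\in H_{\alpha}$ for each $\alpha<\mathfrak{c}$ would give $\mathfrak{c}$ pairwise distinct elements of $H$ (distinct because $\langle S_{\alpha}\rangle\cap\langle S_{\beta}\rangle=\{e\}$ whenever $\alpha\neq\beta$), contradicting $|H|<\mathfrak{c}$; this uses only pairwise trivial intersections and is valid in any group. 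You should replace your support-counting by this argument. For the converse direction your recursion is exactly the route the paper takes (it defers to the proof of Proposition 2.3 of Rodr\'{\i}guez--Tkachenko, a result stated for \emph{abelian} groups), so the reliance on commutativity there is shared with the paper itself; but since the statement being proved carries no abelian hypothesis, you should at least flag that the stagewise-to-global step is precisely where commutativity enters.
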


\begin{proof}
The proof of the necessity is similar to the proof of \cite[Proposition 2.3]{RT2021}, hence it suffices to prove the sufficiency.
Assume that $G$ is maximally fragmentable. Then there exists an independent family $\{D_{\alpha}: \alpha<\mathfrak{c}\}$ of dense subgroup of $G$. Since $G$ is cosmic, each $D_{\alpha}$ is separable, hence there exists a countable subset $S_{\alpha}$ of $D_{\alpha}$ such that $D_{\alpha}\subset \overline{S_{\alpha}}$ for each $\alpha<\mathfrak{c}$. Then each $\langle S_{\alpha}\rangle$ is dense subgroup of $D_{\alpha}$, thus it is dense in $G$. Obviously, $\{\langle S_{\alpha}\rangle: \alpha<\mathfrak{c}\}$ is an independent family of dense subgroups of $G$. Take any subgroup $H$ of $G$ with $|H|<\mathfrak{c}$ and let $H_{\alpha}=H\cap \langle S_{\alpha}\rangle$ for each $\alpha<\mathfrak{c}$. Obviously, $\{H_{\alpha}: \alpha<\mathfrak{c}\}$ is an independent family of subgroups of $H$. From $|H|<\mathfrak{c}$, it follows that $|H_{\alpha}|=1$ for some $\alpha<\mathfrak{c}$, that is, $H_{\alpha}=\{e\}$. Therefore, $H\cap \langle S_{\alpha}\rangle=\{e\},$ which shows that $G$ is $d$-independent.
\end{proof}

\begin{corollary}\cite[Proposition 2.3]{RT2021}
A second countable topological abelian group $G$ with $|G|=\mathfrak{c}$ is $d$-independent iff $G$ is maximally fragmentable.
\end{corollary}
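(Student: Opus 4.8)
The plan is to deduce this statement directly from Proposition~\ref{2022p3}, of which it is a special case. The only point that genuinely requires verification is that a second countable topological abelian group qualifies as a \emph{cosmic semitopological group} in the sense required by Proposition~\ref{2022p3}; once this is established, there is nothing left to prove.

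First I would recall that, by the terminology fixed in the introduction, a space is cosmic if it is regular and has a countable network. Since $G$ is a Hausdorff topological group, it is automatically Tychonoff, and in particular regular. Being second countable, $G$ admits a countable base $\mathcal{B}$, and as every base is in particular a network, $\mathcal{B}$ witnesses that $G$ has a countable network. Hence $G$ is cosmic. Moreover, every topological group is in particular a semitopological group, so $G$ is a cosmic abelian semitopological group, and by hypothesis $|G|=\mathfrak{c}$.

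With this observation in place I would simply invoke Proposition~\ref{2022p3}: it asserts that a cosmic semitopological group $G$ with $|G|=\mathfrak{c}$ is $d$-independent iff it is maximally fragmentable, and $G$ satisfies all of these hypotheses. This yields the desired equivalence at once.

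I do not expect any real obstacle here, since the corollary is a straightforward specialization of the preceding proposition. The entire content lies in the (routine) passage from \emph{second countable topological group} to \emph{cosmic semitopological group}, which uses only that a countable base is a countable network and that Hausdorff topological groups are regular; no new argument concerning $d$-independence or maximal fragmentability is needed.
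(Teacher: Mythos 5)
Your proposal is correct and matches the paper's intent exactly: the paper states this corollary with no separate proof, treating it as an immediate specialization of Proposition~\ref{2022p3}, and your verification that a second countable Hausdorff topological abelian group is a cosmic (regular with countable network) semitopological group of cardinality $\mathfrak{c}$ is precisely the routine reduction being invoked.
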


\begin{remark}
(1) The free abelian topological group $A(\mathbb{R})$ is a cosmic space with $|A(\mathbb{R})|=\mathfrak{c}$. Clearly,  $r_{0}(A(\mathbb{R}))\geq\mathfrak{c}$. By \cite[Corollary 3.6]{RT2021}, $A(\mathbb{R})$ is $d$-independent, thus it is maximally fragmentable by Proposition~\ref{2022p3}. However, $A(\mathbb{R})$ is not second countable.

(2) Let $G$ be the Sorgenfrey line. From the proof of \cite[Theorem 4.4]{LLL2015}, it follows that $G^{\omega}$ is a separable and maximally fragmentable paratopological group satisfying $|G|=\mathfrak{c}$.  By Proposition~\ref{2022p0}, $G^{\omega}$ is $d$-independent. However, $G^{\omega}$ is not a cosmic space and non-precompact. Moreover, let $H$ be unit plane with the Sorgenfrey topology. Then $H^{\omega}$ is precompact and is not a cosmic space. Since $H^{\omega}$ endowed with usual topology is $d$-independent by \cite[Corollary 4.3]{RT2021}, it follows from Proposition~\ref{2022p0} that $H^{\omega}$ is $d$-independent. Then $H^{\omega}$ is maximally fragmentable separable by \cite[Theorem 4.1]{RT2021}. Therefore, the following question is interesting.
\end{remark}

\begin{question}
Is each maximally fragmentable separable semitopological group $G$ satisfying $|G|=\mathfrak{c}$ $d$-independent? What if, additionally, $G$ is precompact?
\end{question}

Now we consider dense-subgroup-separable paratopological groups. The following lemma is provided in \cite[Lemma 2.6]{RT2021} for the case of abelian.

\begin{lemma}\label{2022l3}
Let $G$ be a group. If $S$ is a subgroup of $G$ with $|S|<\mathfrak{c}$ and $A$ is a subset of $G$ with $|A|\geq\mathfrak{c}$ such that $\langle x\rangle\cap \langle y\rangle=\{e\}$ for distinct $x, y\in A$, then there exists $x\in A$ such that $\langle x\rangle\cap S=\{e\}.$
\end{lemma}

\begin{theorem}\label{2022t11}
 Let $G$ be a regular dense-subgroup-separable abelian semitopological group. Then the following conditions are equivalent:
 \begin{enumerate}
\item $G$ is $d$-independent;

 \item for each subgroup $S$ of $G$ with $|S|<\mathfrak{c}$ and each nonempty open set $U$ in $G$, there is $x\in U\setminus\{e\}$ such that $\langle x\rangle\cap S=\{e\};$

\item every nonempty open set in $G$ contains an independent subset of cardinality $\mathfrak{c}$.
 \end{enumerate}
\end{theorem}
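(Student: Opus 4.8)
The plan is to prove the cycle of implications $(1)\Rightarrow(2)\Rightarrow(3)\Rightarrow(1)$, using the dense-subgroup-separability hypothesis crucially in the step $(2)\Rightarrow(3)$ (to build an independent set of full size $\mathfrak{c}$ inside every open set) and the regularity together with separability in the step $(3)\Rightarrow(1)$ (to extract a countable dense subgroup meeting a prescribed small subgroup trivially). Throughout I would exploit that a regular dense-subgroup-separable group is separable, so $w(G)\le\mathfrak{c}$ and hence $|G|\le 2^{\mathfrak{c}}$, and that every open set, being nonempty in a separable regular space, is itself well-behaved.

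\emph{Implication} $(1)\Rightarrow(2)$. Assume $G$ is $d$-independent, fix a subgroup $S$ with $|S|<\mathfrak{c}$ and a nonempty open set $U$. By $d$-independence there is a countable dense subgroup $H$ with $H\cap S=\{e\}$. Since $H$ is dense and $U$ is open and nonempty, $H\cap U\neq\emptyset$; moreover $H\neq\{e\}$ forces a nontrivial element, and by translating (using that $G$ is a semitopological group so left translations are homeomorphisms) I can arrange a point $x\in U\setminus\{e\}$ lying in $H$. Then $\langle x\rangle\subseteq H$, so $\langle x\rangle\cap S\subseteq H\cap S=\{e\}$, giving exactly (2). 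The only care needed is to guarantee the chosen element is nontrivial and lands in $U$; this is where I would use the density of $H$ in the open set $U\setminus\{e\}$, which is still open and nonempty by the Hausdorff assumption.

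\emph{Implication} $(2)\Rightarrow(3)$. This is the heart of the argument and the step I expect to be the main obstacle. Fix a nonempty open $U$; I want an independent subset of $U$ of cardinality $\mathfrak{c}$. The plan is a transfinite recursion of length $\mathfrak{c}$: having chosen independent elements $\{x_\beta:\beta<\alpha\}\subseteq U$ with $\alpha<\mathfrak{c}$, set $S_\alpha=\langle x_\beta:\beta<\alpha\rangle$, which has cardinality $<\mathfrak{c}$, and apply (2) to produce $x_\alpha\in U\setminus\{e\}$ with $\langle x_\alpha\rangle\cap S_\alpha=\{e\}$. In an abelian group this last condition is exactly what makes $\{x_\beta:\beta\le\alpha\}$ remain independent (an element of infinite order adjoined with trivial cyclic intersection preserves independence, and the torsion bookkeeping is handled because $\langle x_\alpha\rangle\cap S_\alpha=\{e\}$ rules out any nontrivial relation involving $x_\alpha$). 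The subtle point is ensuring the recursion does not stall before reaching $\mathfrak{c}$ and that independence is genuinely maintained at limit stages, which follows since independence is a property of finite subsets and is therefore preserved under unions of increasing chains. Here dense-subgroup-separability is what ultimately licenses applying (2) at every stage — without it the equivalence would fail, as the earlier sections show for non-metrizable compact groups.

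\emph{Implication} $(3)\Rightarrow(1)$. Assume every nonempty open set contains an independent set of size $\mathfrak{c}$, and fix a subgroup $S$ with $|S|<\mathfrak{c}$; I must produce a countable dense subgroup $H$ with $H\cap S=\{e\}$. Since $G$ is separable regular, fix a countable $\pi$-base or rather a countable dense set and enumerate a countable family $\{U_n:n\in\omega\}$ of nonempty open sets forming a $\pi$-network. The plan is to pick, for each $n$, an element $x_n\in U_n$ by a recursion analogous to the previous step: at stage $n$ let $S_n=\langle S\cup\{x_0,\dots,x_{n-1}\}\rangle$, which still has cardinality $<\mathfrak{c}$, and use (3) applied to $U_n$ together with Lemma~\ref{2022l3} to find $x_n\in U_n$ with $\langle x_n\rangle\cap S_n=\{e\}$; Lemma~\ref{2022l3} is exactly the tool that converts "$U_n$ contains an independent set of size $\mathfrak{c}$" into "$U_n$ contains an element whose cyclic group meets the small subgroup $S_n$ trivially." Setting $H=\langle x_n:n\in\omega\rangle$ gives a countable subgroup meeting each $U_n$, hence dense, and the recursive trivial-intersection condition guarantees $H\cap S=\{e\}$, establishing $d$-independence. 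The one technical hurdle is verifying $H\cap S=\{e\}$ from the stagewise conditions: any nontrivial element of $H$ is a word in finitely many $x_n$, so lies in some $S_{n}$-related subgroup, and the construction ensures no such word equals a nontrivial element of $S$ — this bookkeeping is routine but must be stated carefully in the abelian setting.
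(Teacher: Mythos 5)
Your steps (1)$\Rightarrow$(2) and (2)$\Rightarrow$(3) are essentially sound; the paper gets (2)$\Rightarrow$(3) more quickly by taking a \emph{maximal} independent subset $A$ of $U$ and using (2) to contradict maximality when $|A|<\mathfrak{c}$, but your transfinite recursion proves the same thing. (One small point: in (1)$\Rightarrow$(2), the nonemptiness of $U\setminus\{e\}$ is not ``by the Hausdorff assumption'' --- it requires observing that a $d$-independent group cannot be discrete, which the paper records explicitly.) The genuine gap is in (3)$\Rightarrow$(1). You start by fixing ``a countable family $\{U_n:n\in\omega\}$ of nonempty open sets forming a $\pi$-network,'' i.e.\ a countable $\pi$-base, and you deduce density of $H=\langle x_n: n\in\omega\rangle$ from $x_n\in U_n$. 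But separability plus regularity does \emph{not} yield a countable $\pi$-base: a separable regular space, indeed a separable topological group such as $\mathbb{T}^{\mathfrak{c}}$, can have uncountable $\pi$-weight. A countable dense set only gives a countable $\pi$-network of \emph{singletons}, which is useless here, since your recursion must apply Lemma~\ref{2022l3} inside each member of the family, and a singleton contains no independent set of cardinality $\mathfrak{c}$. So the enumeration your recursion runs over need not exist, and the density of $H$ is unsupported. Tellingly, your outline never uses dense-subgroup-separability in this step, while asserting it is needed in (2)$\Rightarrow$(3) --- where in fact it plays no role (that implication is pure algebra from hypothesis (2)).

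The missing idea is the paper's two-stage construction, which is exactly where dense-subgroup-separability enters. From separability and regularity one gets $w(G)\le 2^{\omega}=\mathfrak{c}$, so there is a base $\{U_\alpha:\alpha<\mathfrak{c}\}$ of cardinality at most $\mathfrak{c}$. Run your recursion transfinitely over this base (length $\mathfrak{c}$, not $\omega$): choose $x_\alpha\in U_\alpha$ with $\langle x_\alpha\rangle\cap S_\alpha=\{e\}$, where $S_\alpha=\langle S\cup\{x_\beta:\beta<\alpha\}\rangle$ still has cardinality $<\mathfrak{c}$; this is legitimate because (3) together with Lemma~\ref{2022l3} gives (2), which is what each step of the recursion uses. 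The result is a dense but generally \emph{uncountable} subgroup $H=\langle\{x_\alpha:\alpha<\mathfrak{c}\}\rangle$ with $H\cap S=\{e\}$. Now --- and only now --- invoke the hypothesis: $H$ is a dense subgroup of a dense-subgroup-separable group, hence separable, hence contains a countable dense subgroup $A$; then $A$ is dense in $G$ and $A\cap S\subseteq H\cap S=\{e\}$, which is $d$-independence. Without this shrink-a-big-dense-subgroup step, the countability of the witnessing subgroup cannot be obtained, and that is precisely what your proof lacks.
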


\begin{proof}
(1) $\Rightarrow$ (2). Assume that $G$ is $d$-independent. Take any subgroup $S$ of $G$ with $|S|<\mathfrak{c}$ and any nonempty open set $U$ in $G$. Since $G$ is $d$-independent, there exists a countable dense subgroup $H$ such that $H\cap S=\{e\}$. Since any $d$-independent semitopological group is not discrete, it follows that $U\cap (H\setminus\{e\})\neq\emptyset$. Pick any $x\in U\cap (H\setminus\{e\})$. Then $\langle x\rangle\subset H$, thus $\langle x\rangle\cap S=\{e\}$.

(2) $\Rightarrow$ (1). Take any subgroup $S$ of $G$ with $|S|<\mathfrak{c}$. From the separability and regularity of $G$, it follows from \cite[Thoerem 1.57]{E1989} that $w(G)\leq 2^{\omega}=\mathfrak{c}$. Assume that $\mathcal{B}=\{U_{\alpha}: \alpha<\mathfrak{c}\}$ is a base for $G$. By induction, we can define a subset $D=\{x_{\alpha}: \alpha\in\mathfrak{c}\}$ of $G$ such that $x_{\alpha}\in U_{\alpha}$ and $\langle x_{\alpha}\rangle\cap S_{\alpha}=\{e\}$ for every $\alpha<\mathfrak{c}$, where $S_{0}=S$ and $S_{\alpha}=\langle S\cup\{x_{\beta}: \beta<\alpha\}\rangle$. Clearly, each $|S_{\alpha}|\leq |S|\cdot \alpha\cdot\omega<\mathfrak{c}$ and $D$ is dense in $G$. Put $H=\langle D\rangle$. It is easily verified that $H\cap S=\{e\}$. Since $G$ is dense-subgroup-separable and $H$ is dense in $G$, it follows that $H$ contains a countable dense subgroup $A$ in $H$. Hence $A$ is dense in $G$ and $A\cap S\subset H\cap S=\{e\}$. Therefore, $G$ is $d$-independent.

(2) $\Rightarrow$ (3). Let $A$ be a maximal independent subset of a nonempty open set $U$ in $G$. We claim that $|A|\geq \mathfrak{c}$. Suppose not, then $|A|<\mathfrak{c}$. Put $S=\langle A\rangle$. Clearly, $|S|\leq \mathfrak{c}$. By our assumption, there exists an element $z\in U\setminus\{e\}$ so that
$\langle z\rangle\cap S=\{e\}$. Hence $B=A\cup \{z\}$ is an independent subset of $U$, which contradicts the maximality of the set $A$.

(3) $\Rightarrow$ (2). The implication is immediate from Lemma~\ref{2022l3}.
\end{proof}

\begin{corollary}
Let $G$ be a regular separable abelian semitopological group with a countable tightness. Then $G$ is $d$-independent iff for each subgroup $S$ of $G$ with $|S|<\mathfrak{c}$ and each nonempty open set $U$ in $G$, there is $x\in U\setminus\{e\}$ such that $\langle x\rangle\cap S=\{e\}.$
\end{corollary}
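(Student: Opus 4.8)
The plan is to deduce this corollary directly from Theorem~\ref{2022t11}, whose equivalence (1)~$\Leftrightarrow$~(2) is exactly the biconditional asserted here. The only gap to bridge is that Theorem~\ref{2022t11} is stated for \emph{dense-subgroup-separable} regular abelian semitopological groups, whereas the hypotheses of this corollary are that $G$ is regular, separable, abelian, and of countable tightness. So the entire content of the argument reduces to verifying that these hypotheses imply dense-subgroup-separability, after which Theorem~\ref{2022t11} applies verbatim.

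First I would invoke Proposition~\ref{p5}, which states that each separable space with countable tightness is dense-separable. Since $G$ is separable and has countable tightness, $G$ is dense-separable. Next I would observe that every dense-separable semitopological group is dense-subgroup-separable: indeed, any dense subgroup is in particular a dense subset, hence separable by definition of dense-separability. (This implication is noted explicitly in the text just before Question~\ref{q2}.) Thus $G$ is a regular dense-subgroup-separable abelian semitopological group, and all hypotheses of Theorem~\ref{2022t11} are met.

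Having established that, I would simply apply the equivalence (1)~$\Leftrightarrow$~(2) of Theorem~\ref{2022t11}: $G$ is $d$-independent if and only if, for each subgroup $S$ of $G$ with $|S|<\mathfrak{c}$ and each nonempty open set $U$ in $G$, there is $x\in U\setminus\{e\}$ such that $\langle x\rangle\cap S=\{e\}$. This is precisely the statement of the corollary, completing the proof.

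There is essentially no obstacle here, since the corollary is a straightforward specialization of Theorem~\ref{2022t11} obtained by feeding in a sufficient condition (countable tightness plus separability) for the dense-subgroup-separability hypothesis. The only point requiring any care is to cite Proposition~\ref{p5} correctly to pass from the stated topological hypotheses to dense-separability, and then to note the trivial implication from dense-separability to dense-subgroup-separability; everything else is an immediate appeal to the already-proved theorem.
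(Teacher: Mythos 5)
Your proposal is correct and follows exactly the route the paper intends: the corollary is stated without proof precisely because it is the specialization of Theorem~\ref{2022t11} (equivalence of (1) and (2)) obtained by noting, via Proposition~\ref{p5}, that a separable space of countable tightness is dense-separable, hence dense-subgroup-separable. Nothing is missing; your citation of Proposition~\ref{p5} and the trivial implication from dense-separability to dense-subgroup-separability is the whole argument.
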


\begin{theorem}\label{2022t22}
If $G$ is a regular dense-subgroup-separable abelian semitopological group with $r_{0}(G)\geq\mathfrak{c}$, then $G$ is  $d$-independent.
\end{theorem}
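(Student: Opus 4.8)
The plan is to verify condition (3) of Theorem~\ref{2022t11}, namely that every nonempty open set in $G$ contains an independent subset of cardinality $\mathfrak{c}$; once this is established, Theorem~\ref{2022t11} immediately yields that $G$ is $d$-independent. I would first record that, being dense-subgroup-separable, $G$ is a dense subgroup of itself and hence separable; in particular $G$ is ccc, so every disjoint family of nonempty open subsets of $G$ is countable. Then I would argue by contradiction: suppose some nonempty open set $U$ contains no independent subset of size $\mathfrak{c}$, and use Zorn's Lemma to fix a maximal independent subset $A\subseteq U$, so that $|A|<\mathfrak{c}$; put $S=\langle A\rangle$, a subgroup with $|S|<\mathfrak{c}$.

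The first key step is to translate the maximality of $A$ into an algebraic constraint. Since adjoining any $z\in U\setminus\{e\}$ to $A$ must destroy independence, a short computation shows that for every such $z$ there is a nonzero integer $m$ with $e\neq mz\in S$, i.e.\ $\langle z\rangle\cap S\neq\{e\}$. Consequently $U$ is contained in the isolator $S_{*}=\{z\in G:\ mz\in S\ \text{for some}\ m\geq 1\}$, which is a subgroup of $G$ with $S_{*}/S$ torsion. By the additivity of the torsion-free rank (see \cite{R2012}) this gives $r_{0}(S_{*})=r_{0}(S)\leq|S|<\mathfrak{c}$.

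The second key step, and the main obstacle, is to convert this local bound into a contradiction with the global hypothesis $r_{0}(G)\geq\mathfrak{c}$. The difficulty is precisely that $r_{0}(G)\geq\mathfrak{c}$ is a global statement while condition (3) is local to each open set, and since in a semitopological group left translations are homeomorphisms but not homomorphisms, one cannot transport a single large independent set into $U$ by homogeneity. The device I would use to bridge this gap is the subgroup $\langle U\rangle$ generated by $U$: for $p\in U$ the translate $p^{-1}U$ is an open neighbourhood of $e$ contained in $\langle U\rangle$, so $\langle U\rangle$ is open; its cosets form a disjoint family of nonempty open sets, whence by the ccc property noted above $\langle U\rangle$ has countable index in $G$. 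Additivity of the torsion-free rank then gives $r_{0}(G)=r_{0}(\langle U\rangle)+r_{0}(G/\langle U\rangle)\leq r_{0}(\langle U\rangle)+\omega$, forcing $r_{0}(\langle U\rangle)\geq\mathfrak{c}$. But $U\subseteq S_{*}$ implies $\langle U\rangle\subseteq S_{*}$, hence $r_{0}(\langle U\rangle)\leq r_{0}(S_{*})<\mathfrak{c}$, a contradiction. Therefore every nonempty open set contains an independent subset of cardinality $\mathfrak{c}$, condition (3) of Theorem~\ref{2022t11} holds, and $G$ is $d$-independent. (The passage from (3) back to $d$-independence runs through Lemma~\ref{2022l3}, which is what makes such an independent set of size $\mathfrak{c}$ usable.)
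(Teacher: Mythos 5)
Your proof is correct, but it enters Theorem~\ref{2022t11} through a different door than the paper does. The paper verifies condition (2) of Theorem~\ref{2022t11} directly and constructively: separability yields a countable \emph{subgroup} $C$ (generated by a countable dense set) with $U+C=G$ --- this covering step is legitimate even without continuous inversion, because the translate $g+C$ of the dense subgroup $C$ is dense and hence meets $U$; since $r_{0}(G)\geq\mathfrak{c}$ gives an independent set $D$ of infinite-order elements with $|D|=\mathfrak{c}$, and $\mathrm{cf}(\mathfrak{c})>\omega$, some translate $U+c$ contains $\mathfrak{c}$-many points of $D$; Lemma~\ref{2022l3} applied to the enlarged subgroup $R=S+C$ then produces $d\in D\cap(U+c)$ with $\langle d\rangle\cap R=\{e\}$, and the explicit witness is $y=d-c\in U$ (enlarging $S$ to $R$ is precisely what compensates for translation by $-c$ not being a homomorphism, the difficulty you correctly flagged). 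You instead verify condition (3) by contradiction: a maximal independent $A\subseteq U$ of size $<\mathfrak{c}$ traps $U$ inside the isolator $S_{*}$ of $S=\langle A\rangle$, which has torsion-free rank $<\mathfrak{c}$, while $\langle U\rangle$ is an open subgroup, hence of countable index by ccc, so rank additivity forces $r_{0}(\langle U\rangle)\geq\mathfrak{c}$ --- a contradiction. Both routes use separability plus the rank hypothesis, but they buy different things: the paper's translation-covering argument is elementary and exhibits the required element of $U$ concretely, whereas your argument is more structural, relying on the (standard, e.g.\ via tensoring with $\mathbb{Q}$, but not proved in the paper) identities $r_{0}(G)=r_{0}(H)+r_{0}(G/H)$ and $r_{0}(S_{*})=r_{0}(S)$, in exchange for isolating exactly where topology enters: separability is used only as ``ccc, hence every open subgroup has countable index.'' Two small points to tidy: for $z\in A$ the phrase ``adjoining $z$ destroys independence'' is vacuous, though the needed conclusion $\langle z\rangle\cap S\neq\{e\}$ holds trivially since $z\in S\setminus\{e\}$; and one should note $e\in S_{*}$ so that the inclusion $U\subseteq S_{*}$ really covers all of $U$.
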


\begin{proof}
By Theorem~\ref{2022t11}, it suffices to prove that for each subgroup $S$ of $G$ with $|S|<\mathfrak{c}$ and each nonempty open set $U$ in $G$, there is $x\in U\setminus\{e\}$ such that $\langle x\rangle\cap S=\{e\}.$

Take any subgroup $S$ of $G$ with $|S|<\mathfrak{c}$ and nonempty open set $U$ in $G$. Since $G$ is dense-subgroup-separable, it follows that $G$ is separable, then there exists a countable subgroup $C$ of $G$ such that $U+C=G$. Moreover, it follows from$r_{0}(G)\geq\mathfrak{c}$ that $G$ has an independent set $D$ of elements of infinite order such that $|D|=\mathfrak{c}$. Hence there exists $c\in C$ such that $|D\cap (U+c)|=\mathfrak{c}$. Let $D^{\ast}=D\cap (U+c)$. Put $R=S+C$. Clearly, $|R|<\mathfrak{c}$. From Lemma~\ref{2022l3}, it follows that there exists $d\in D^{\ast}\setminus\{c\}$ such that $\langle d\rangle\cap T=\{e\}$. Put $y=d-c$. Clearly, $y\neq e$ and $y$ belongs to $U$. We claim that $\langle y\rangle\cap S=\{e\}$.

Indeed, suppose not, there exists positively natural number $n$ such that $ny\in S$, that is, $ny=nd-nc\in S$. Since $c\in T$ and $ny\in T$, it follows that $nd\in T$. However, $\langle d\rangle\cap T=\{e\}$, which is a contradiction.
\end{proof}

\begin{corollary}\label{2022cc}
If $G$ is a regular dense-subgroup-separable abelian semitopological group with $|G|\geq\mathfrak{c}$ and $|tor(G)|<\mathfrak{c}$, then $G$ is  $d$-independent.
\end{corollary}

\begin{proof}
Since $|G|\geq\mathfrak{c}$ and $|tor(G)|<\mathfrak{c}$, it follows that $|G/tor(G)|\geq\mathfrak{c}$. Then $r_{0}(G)\geq\mathfrak{c}$ by \cite[Remark 2.16]{RT2021}. Therefore,  the required conclusion follows from Theorem~\ref{2022t22}.
\end{proof}

\begin{lemma}\label{2022l4}
Let $G$ be a dense-subgroup-separable paratopological abelian group with $|G|>1$. If $G$ is bounded, every primary component of $G$ is dense-subgroup-separable.
\end{lemma}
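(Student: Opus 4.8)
The plan is to realise $G$ as a \emph{topological} direct product of its finitely many primary components and then to pull the separability of dense subgroups back from $G$ to each factor.

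First I would use the algebraic structure of bounded groups. Let $m$ be the period of $G$ and write $m=p_{1}^{a_{1}}\cdots p_{k}^{a_{k}}$. Since every $g\in G$ satisfies $mg=e$, the order of each element divides $m$, so $G_{p}=\{e\}$ whenever $p\nmid m$; thus $G$ has only the finitely many nontrivial primary components $G_{p_{1}},\ldots,G_{p_{k}}$ and, algebraically, $G=G_{p_{1}}\oplus\cdots\oplus G_{p_{k}}$. Putting $m_{i}=m/p_{i}^{a_{i}}$, the integers $m_{1},\ldots,m_{k}$ are coprime, so by B\'ezout there are integers $c_{i}$ with $\sum_{i}c_{i}m_{i}=1$; the maps $e_{i}\colon g\mapsto c_{i}m_{i}g$ are then orthogonal idempotents with $e_{i}(G)=G_{p_{i}}$, $e_{i}\uhr G_{p_{i}}=\mathrm{id}$, and $\sum_{i}e_{i}=\mathrm{id}_{G}$.

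The key point, and the place where \emph{bounded} is essential, is that each $e_{i}$ is continuous even though $G$ is only paratopological. Indeed, because $mg=e$ for all $g$, every integer-multiple map $g\mapsto ng$ coincides with $g\mapsto (n\bmod m)g$, a finite iterate of the jointly continuous addition, and is therefore continuous; in particular inversion $g\mapsto(m-1)g$ is continuous, so $G$ is in fact a topological group. Consequently the map $g\mapsto(e_{1}(g),\ldots,e_{k}(g))$ is a continuous algebraic isomorphism of $G$ onto $\prod_{i}G_{p_{i}}$ whose inverse $(x_{1},\ldots,x_{k})\mapsto x_{1}+\cdots+x_{k}$ is continuous as well; hence it is a homeomorphism, and $G$ is the \emph{topological} product of its primary components, each carrying its subspace topology.

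Finally I would transfer separability. Fix $i$ and let $D$ be a dense subgroup of $G_{p_{i}}$; set $K=\prod_{j\neq i}G_{p_{j}}$. Under the above identification $D\times K$ is a subgroup of $G$, and since the closure of a product is the product of the closures we get $\overline{D\times K}=\overline{D}\times K=G_{p_{i}}\times K=G$, so $D\times K$ is dense in $G$. As $G$ is dense-subgroup-separable, $D\times K$ is separable, and applying the continuous projection $e_{i}$, which carries $D\times K$ onto $D$, shows that $D$ is separable, a continuous image of a separable space being separable. Thus every dense subgroup of $G_{p_{i}}$ is separable, i.e. $G_{p_{i}}$ is dense-subgroup-separable (the trivial components being separable for free). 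I expect the only real obstacle to be the middle step: guaranteeing that the purely algebraic splitting is simultaneously a topological splitting in the merely paratopological category, which is exactly what boundedness buys through the continuity of the multiplication-by-$n$ maps.
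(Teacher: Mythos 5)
Your proof is correct, and at its core it follows the same strategy as the paper's: decompose $G$ into its finitely many primary components, project onto a component $G_{p_i}$ by multiplication by an integer coprime to $p_i$, pull a dense subgroup of $G_{p_i}$ back to a dense subgroup of $G$ (your $D\times K$ is exactly $e_i^{-1}(D)$), apply dense-subgroup-separability of $G$, and push a countable dense set forward along the continuous projection. The difference lies in how density of the pulled-back subgroup is justified, and your version is the more careful one. The paper works with the single homomorphism $\psi(x)=p_{1}\cdots p_{n-1}x$ and asserts without proof that $\psi$ is open and continuous; openness is precisely what makes $\psi^{-1}(H)$ dense in $G$. Moreover, as written the paper's multiplier annihilates the other components only when the period is square-free; the correct multiplier is $m/p_{n}^{a_{n}}$, which is exactly your $m_{i}$. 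You instead build the Bézout idempotents $e_{i}(g)=c_{i}m_{i}g$, observe that boundedness makes every map $g\mapsto ng$ continuous in the paratopological setting (reduce $n$ modulo the period; this also shows inversion $g\mapsto(m-1)g$ is continuous, so $G$ is in fact a topological group -- a bonus the paper neither uses nor needs), and conclude that $G$ is the topological product $\prod_{i}G_{p_{i}}$, after which density of $D\times K$ is immediate from $\overline{D\times K}=\overline{D}\times\overline{K}$ with no openness claim required. What the paper's route buys is brevity; what yours buys is that every step is verified, and your product decomposition in fact yields, as a byproduct, the openness of $\psi$ that the paper merely asserts.
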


\begin{proof}
Let $G=\bigoplus_{1\leq k\leq n}G_{p_{k}}$ be the decomposition of $G$ into the direct sum of its primary components. Without loss of generality, we only prove $G_{p_{n}}$ is dense-subgroup-separable since the proof of the other primary component is similar. If $n=1$, then it is obvious. Hence we may assume that $n>1$. Let $\psi$ be the natural homomorphism of $G$ onto $G_{p_{n}}$, $\psi(x)=p_{1}\cdots p_{n-1}x$ for each $x\in G$. Clearly, $\psi$ is open and continuous. Take any dense subgroup $H$ of $G_{p_{n}}$. Then $$\psi^{-1}(H)=(\bigoplus_{1\leq k\leq n-1}G_{p_{k}})\bigoplus H$$ is dense subgroup in $G$, hence there exists a countable subset $D$ of $\psi^{-1}(H)$ such that $D$ is dense in $\psi^{-1}(H)$. Then $\psi(D)$ is a countable set and dense in $H$. Therefore, $G_{p_{n}}$ is dense-subgroup-separable.
\end{proof}

\begin{theorem}\label{2022t23}
Let $G$ be a regular dense-subgroup-separable paratopological abelian group with $|G|>1$. If $G$ is bounded, then the following conditions are equivalent:
 \begin{enumerate}
\item $G$ is $d$-independent;

 \item $G$ is a nontrivial $M$-group;

\item each nontrivial primary component $G_{p}$ of $G$ $d$-independent.
 \end{enumerate}
\end{theorem}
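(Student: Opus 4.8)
The plan is to prove the cycle $(1)\Rightarrow(2)\Rightarrow(3)\Rightarrow(1)$. The implication $(1)\Rightarrow(2)$ is immediate: by Proposition~\ref{2022p4} every $d$-independent abelian semitopological group is algebraically an $M$-group, and $|G|>1$ guarantees nontriviality. For $(3)\Rightarrow(1)$ I would first observe that, since $G$ is bounded, its primary decomposition $G=\bigoplus_{k=1}^{n}G_{p_{k}}$ is finite; moreover, via the orthogonal idempotents furnished by the Chinese Remainder Theorem (each projection $x\mapsto e_{k}x$, where $e_{k}\equiv 1\pmod{p_{k}^{a_{k}}}$ and $e_{k}\equiv 0\pmod{p_{l}^{a_{l}}}$ for $l\neq k$, is multiplication by an integer, hence continuous), it is in fact a \emph{topological} direct product $G\cong\prod_{k=1}^{n}G_{p_{k}}$. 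As the decomposition lists only the nontrivial components, hypothesis $(3)$ makes every factor $d$-independent, and Lemma~\ref{2022l2} (applied with $\kappa=n\leq\mathfrak{c}$) yields that $G$ is $d$-independent.

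The substance lies in $(2)\Rightarrow(3)$. Fix a nontrivial primary component $G_{p}$, of exponent $p^{s}$ with $s\geq 1$. By Lemma~\ref{2022l4} it is dense-subgroup-separable, and as a subgroup it is regular and abelian, so Theorem~\ref{2022t11} is available for $G_{p}$. I would first transfer the $M$-group property from $G$ to $G_{p}$: writing $N=\prod_{l}p_{l}^{a_{l}}$ for the period and putting $m=(N/p_{k}^{a_{k}})\,p^{j}$, multiplication by $N/p_{k}^{a_{k}}$ annihilates every $G_{p_{l}}$ with $l\neq k$ and acts as an automorphism on the $p_{k}$-group $G_{p}$, so $mG=p^{j}G_{p}$; hence $|p^{j}G_{p}|=|mG|\in\{1\}\cup[\mathfrak{c},\infty)$ for every $j$, that is, $G_{p}$ is an $M$-group by \cite[Proposition 1.4]{DS2016}. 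In particular, since $G_{p}$ has exponent $p^{s}$ we have $p^{s-1}G_{p}\neq\{e\}$, and the $M$-group condition forces $|p^{s-1}G_{p}|\geq\mathfrak{c}$.

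To prove $G_{p}$ is $d$-independent I would verify condition~(2) of Theorem~\ref{2022t11}. The key simplification is an order-theoretic criterion in the cyclic group $\langle x\rangle$: if $x\in G_{p}$ satisfies $p^{s-1}x\notin S$, then $p^{s-1}x\neq e$ forces $x$ to have order exactly $p^{s}$, so $\langle x\rangle$ is cyclic with unique minimal subgroup $\langle p^{s-1}x\rangle$; any nontrivial subgroup of $\langle x\rangle$ contains $p^{s-1}x$, so $\langle x\rangle\cap S\neq\{e\}$ would give $p^{s-1}x\in S$, a contradiction. Thus it suffices, for each subgroup $S$ with $|S|<\mathfrak{c}$ and each nonempty open $U$, to produce $x\in U$ with $p^{s-1}x\notin S$. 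Suppose not, so that $p^{s-1}U\subseteq S$. Choosing $d\in U$ and an open $W\ni e$ with $d+W\subseteq U$, the set $p^{s-1}W$ lies in the subgroup $S_{1}=\langle S\cup\{p^{s-1}d\}\rangle$ of cardinality $<\mathfrak{c}$, whence $p^{s-1}\langle W\rangle=\langle p^{s-1}W\rangle\subseteq S_{1}$. Now $\langle W\rangle$ is an open subgroup of $G_{p}$; since $G_{p}$ is separable, hence ccc, its cosets (being pairwise disjoint open sets) are countably many, so $p^{s-1}G_{p}$ is covered by countably many cosets of $p^{s-1}\langle W\rangle\subseteq S_{1}$. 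This gives $|p^{s-1}G_{p}|\leq\omega\cdot|S_{1}|<\mathfrak{c}$, contradicting $|p^{s-1}G_{p}|\geq\mathfrak{c}$.

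The main obstacle is precisely this last localization. Because the kernels $G_{p}[p^{j}]$ can be as large as $G_{p}$ itself, the cardinality bookkeeping of Lemma~\ref{2022l3} cannot, on its own, force a maximal-order element avoiding $S$ inside a prescribed open set. The device that removes the difficulty is to push everything into the top socle layer $p^{s-1}G_{p}$, where independence collapses to the single condition $p^{s-1}x\notin S$, and then to replace cardinal counting by the ccc property of the separable group together with the $M$-group lower bound $|p^{s-1}G_{p}|\geq\mathfrak{c}$.
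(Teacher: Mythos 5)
Your proof is correct, but on the central implication $(2)\Rightarrow(3)$ it takes a genuinely different route from the paper. Both arguments invoke Lemma~\ref{2022l4} and then reduce to Theorem~\ref{2022t11}, but the paper verifies condition (3) of that theorem: it transfers the $M$-group property to $G_{p}$ by citing \cite[Lemma 2.22]{RT2021}, uses separability to write $U+C=G_{p}$ with $C$ countable, deduces $|p^{k-1}U|\geq\mathfrak{c}$ inside the prime-period group $H=p^{k-1}G_{p}$, extracts a $\mathfrak{c}$-sized independent subset of $p^{k-1}U$ via \cite[Lemma 2.18]{RT2021}, and lifts it back into $U$ along $x\mapsto p^{k-1}x$. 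You instead verify condition (2): for $x$ of maximal order $p^{s}$ the cyclic group $\langle x\rangle$ has unique minimal subgroup $\langle p^{s-1}x\rangle$, so $\langle x\rangle\cap S=\{e\}$ reduces to $p^{s-1}x\notin S$, and you rule out $p^{s-1}U\subseteq S$ by showing $\langle W\rangle$ is an open subgroup of countable index (ccc from separability), which would force $|p^{s-1}G_{p}|<\mathfrak{c}$ against the $M$-group bound. Your route is more self-contained, avoiding both citations to \cite{RT2021} and replacing the independent-set extraction by elementary cyclic $p$-group structure; the paper's is shorter given those lemmas. On $(3)\Rightarrow(1)$ you prove more than is needed: the paper only uses that $\phi:\prod_{k}G_{p_{k}}\to G$ is a continuous bijection (so images of dense subgroups are dense), while your CRT-idempotent argument upgrades $\phi$ to a topological isomorphism; both conclude via Lemma~\ref{2022l2}. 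Two small points to make explicit: choose the idempotents $e_{k}$ positive, since only multiplication by positive integers is automatically continuous in a paratopological group; and applying \cite[Proposition 1.4]{DS2016} to $G_{p}$ requires the remark that for a $p$-group $m G_{p}=p^{j}G_{p}$ where $p^{j}$ is the $p$-part of $m$, so checking powers of $p$ suffices.
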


\begin{proof}
By Proposition~\ref{2022p4}, we have (1) $\Rightarrow$ (2). It suffice to prove (2) $\Rightarrow$ (3) and (3) $\Rightarrow$ (1).

(2) $\Rightarrow$ (3). Assume that $G$ is a nontrivial $M$-group, then it follows from \cite[Lemma 2.22]{RT2021} that every nontrivial primary component $G_{p}$ of $G$ is also an $M$-group. Fix any nontrivial primary component $G_{p}$ of $G$. Now we prove that $G_{p}$ is $d$-independent. Indeed, it follows from Lemma~\ref{2022l4} that $G_{p}$ is dense-subgroup-separable. By Theorem~\ref{2022t11}, it suffices to prove that every nonempty open set in $G_{p}$ contains an independent subset of cardinality $\mathfrak{c}$. Since $G$ is bounded and $G_{p}$ is a nontrivial $M$-group, there exists a positive integer $k$ such that $G_{p}$ is a subgroup of period $p^{k}$ and $|p^{k-1}G_{p}|\geq\mathfrak{c}$. Put $H=p^{k-1}G_{p}$. Then $H$ is a subgroup of a prime period $p$. Take any non-empty open subset $U$ of $G_{p}$. Since $G_{p}$ is separable, there exists a countable subset $C$ of $G_{p}$ such that $U+C=G_{p}$. Then $p^{k-1}U+p^{k-1}C=H.$ Since $|H|\geq\mathfrak{c}$ and $p^{k-1}C$ is countable, it follows that $|p^{k-1}U|\geq\mathfrak{c}$, then $p^{k-1}U$ contains an independent subset $A$ of cardinality $\mathfrak{c}$ by \cite[Lemma 2.18]{RT2021}. For each $a\in A$, take any $x(a)\in U$ such that $p^{k-1}x(a)=a$. Put $B=\{x(a): a\in A\}$. Then it is easily verified that $B$ is an independent subset of cardinality $\mathfrak{c}$.

(3) $\Rightarrow$ (1). Assume that $G=\bigoplus_{1\leq k\leq n}G_{p_{k}}$ is the decomposition of $G$ into the direct sum of its primary components, where each primary component $G_{p_{k}}$ of $G$ is $d$-independent. Take an arbitrary subgroup $S$ of $G$ with $|S|<\mathfrak{c}$. From Lemma~\ref{2022l2}, the direct product $\Pi_{1\leq k\leq n}G_{p_{k}}$ is a $d$-independent paratopological group, where each factor $G_{p_{k}}$ inherits the topology from the group $G$. Let $\phi$ be the natural homomorphism of $\Pi_{1\leq k\leq n}G_{p_{k}}$ onto $G$, $\phi(x_{1}, \ldots, x_{n})=x_{1}+ \ldots+ x_{n}$; then $\phi$ is a continuous bijection since $G$ is a paratopological group. Hence $\phi^{-1}(S)$ is a subgroup of $\Pi_{1\leq k\leq n}G_{p_{k}}$ with $|\phi^{-1}(S)|<\mathfrak{c}$. Since $\Pi_{1\leq k\leq n}G_{p_{k}}$ is $d$-independent, there exists a countable dense subgroup $D$ of $\Pi_{1\leq k\leq n}G_{p_{k}}$  such that $D\cap \phi^{-1}(S)=\{e\}$. Then $\phi(D)$ is dense in $G$ and $\phi(D)\cap S=\{e\}$. Therefore, $G$ is $d$-independent.
\end{proof}

\begin{remark}
There exists a compact bounded separable metrizable (thus dense-subgroup-separable) topological abelian group $G$ such that $G$ is not $d$-independent. Indeed, let $G=\mathbb{Z}(p)^{\omega}\times \mathbb{Z}(p)^{\omega}\times \mathbb{Z}(p^{2})$, where $\mathbb{Z}(p)$ and $\mathbb{Z}(p^{2})$ are endowed with discrete topologies. By \cite[Example 2.23]{RT2021}, $G$ is not an $M$-group, hence it is not $d$-independent by Theorem~\ref{2022t23}.
\end{remark}

\begin{theorem}
Let $G$ be a regular torsion paratopological abelian $p$-group for some prime $p$ such that each bounded subgroup is dense-subgroup-separable. Then $G$ is $d$-independent iff it is a nontrivial $M$-group.
\end{theorem}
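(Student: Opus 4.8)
The plan is to treat the two directions separately, with the forward implication being immediate and the reverse one reducing, layer by layer, to the already-established bounded case, Theorem~\ref{2022t23}.

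For the forward direction, if $G$ is $d$-independent then it is nontrivial by the very definition of $d$-independence, and Proposition~\ref{2022p4} gives that it is algebraically an $M$-group. So it remains to prove that a nontrivial $M$-group $G$ of the stated type is $d$-independent. Since $G$ is a $p$-group, multiplication by any integer coprime to $p$ is an automorphism, so $mG=p^{v_{p}(m)}G$ for every $m\ge 1$; hence the characterization of $M$-groups via $|mG|$ says exactly that $|p^{j}G|\in\{1\}\cup[\mathfrak{c},\infty)$ for every $j\ge 0$. If $G$ is bounded, then $G$ is itself a bounded, regular, dense-subgroup-separable paratopological abelian group with $|G|>1$, and being a $p$-group its only primary component is $G$ itself; thus Theorem~\ref{2022t23} immediately yields that $G$ is $d$-independent.

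Now suppose $G$ is unbounded. I would write $G=\bigcup_{n\in\mathbb{N}}G[p^{n}]$ as the increasing union of its bounded subgroups $G[p^{n}]=\{g\in G:p^{n}g=e\}$; each $G[p^{n}]$ carries the induced regular paratopological abelian group topology, is bounded of period dividing $p^{n}$, is dense-subgroup-separable by hypothesis, and is nontrivial for $n\ge 1$ because $G$ is a nontrivial $p$-group. The heart of the argument is the claim that each $G[p^{n}]$ is itself an $M$-group; granting this, Theorem~\ref{2022t23} shows that every $G[p^{n}]$ is $d$-independent. To pass from the layers to $G$, fix a subgroup $S\le G$ with $|S|<\mathfrak{c}$ and build countable subgroups $H_{n}$ recursively. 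Having chosen $H_{1},\dots,H_{n-1}$, set $T_{n}=\langle S\cup\bigcup_{k<n}H_{k}\rangle$, a subgroup of size $<\mathfrak{c}$; applying $d$-independence of $G[p^{n}]$ to the subgroup $T_{n}\cap G[p^{n}]$ produces a countable subgroup $H_{n}\le G[p^{n}]$ that is dense in $G[p^{n}]$ and satisfies $H_{n}\cap T_{n}=\{e\}$ (since $H_{n}\subseteq G[p^{n}]$). Put $H=\langle\bigcup_{n}H_{n}\rangle$, a countable subgroup. Each $H_{n}$ is dense in $\overline{G[p^{n}]}$, and since any nonempty open $U\subseteq G$ meets some $G[p^{m}]$ (as $G=\bigcup_{m}G[p^{m}]$), it follows that $\bigcup_{n}H_{n}$, hence $H$, is dense in $G$. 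Finally $H\cap S=\{e\}$: if $h=h_{n_{1}}+\dots+h_{n_{j}}\in S$ with $h_{n_{i}}\in H_{n_{i}}$ and $n_{1}<\dots<n_{j}$, then $h_{n_{j}}=h-(h_{n_{1}}+\dots+h_{n_{j-1}})\in T_{n_{j}}$, so $h_{n_{j}}\in H_{n_{j}}\cap T_{n_{j}}=\{e\}$, and downward induction forces $h=e$. Thus $H$ witnesses $d$-independence of $G$; alternatively one may invoke the proposition that a group containing a dense $d$-independent subgroup is itself $d$-independent.

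The main obstacle is precisely the claim that every $G[p^{n}]$ is an $M$-group. Viewing $G[p^{n}]$ as a bounded $p$-group, its $M$-group property amounts (again via the images $p^{j}(\cdot)$) to the dichotomy $|p^{j}G\cap G[p]|\in\{1\}\cup[\mathfrak{c},\infty)$ for the socle layers $(p^{j}G)[p]=p^{j}G\cap G[p]$, because for a bounded $p$-group the cardinality of each $p^{j}$-image is governed by its socle. The difficulty is that the hypothesis on $G$ controls the sizes of the images $p^{j}G$, not of these socle layers, and for unbounded $p$-groups the two genuinely can diverge; so the real work is to show that the $M$-group condition on $G$, together with the standing dense-subgroup-separability of all bounded subgroups, forces each socle layer to be trivial or of size at least $\mathfrak{c}$. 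I expect this to demand a careful analysis of the socle of $G$, and if it cannot be established in full generality the separability of $G$ itself may have to be brought in as an additional ingredient, in order to rule out reduced $p$-groups of size $\mathfrak{c}$ with small socle.
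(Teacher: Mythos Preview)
Your overall strategy coincides with the paper's: the forward direction via Proposition~\ref{2022p4}, the bounded case via Theorem~\ref{2022t23}, and in the unbounded case the plan of showing each layer $G[p^{n}]$ is an $M$-group, applying Theorem~\ref{2022t23} to it, and then assembling a countable dense subgroup of $G$ meeting $S$ trivially by the recursive construction you describe (which is exactly the paper's $D_{k}\cap(S+D_{1}+\cdots+D_{k-1})=\{e\}$ argument).

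The gap is that you leave the central claim---each $G[p^{n}]$ is an $M$-group---unproved, and even speculate that it may fail without extra separability input. In fact it follows from a short, purely algebraic observation; dense-subgroup-separability is not used here at all. Since $G$ is an \emph{unbounded} $M$-group, $p^{j}G$ is nontrivial for every $j\ge 0$, hence $|p^{j}G|\ge\mathfrak{c}$. By \cite[Lemma~9.9.15]{AT2008}, the socle of the $p$-group $p^{j}G$ then satisfies $|(p^{j}G)[p]|\ge\mathfrak{c}$ as well. Now fix $n$ and $0\le j<n$. If $x\in(p^{j}G)[p]$, write $x=p^{j}g$ with $p^{j+1}g=e$; then $g\in G[p^{j+1}]\subseteq G[p^{n}]$, so $x\in p^{j}G[p^{n}]$. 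This inclusion $(p^{j}G)[p]\subseteq p^{j}G[p^{n}]$ yields $|p^{j}G[p^{n}]|\ge\mathfrak{c}$, while for $j\ge n$ one has $p^{j}G[p^{n}]=\{e\}$. Thus $G[p^{n}]$ is an $M$-group, and your recursion goes through verbatim. Your reformulation of the $M$-group property of $G[p^{n}]$ as a dichotomy on the socle layers $p^{j}G\cap G[p]$ is an unnecessary detour: one only needs the inclusion above as a lower bound for $|p^{j}G[p^{n}]|$, not an identification, and in the unbounded case every such layer is automatically large.
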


\begin{proof}
The necessity is obvious by Proposition~\ref{2022p4}. Now assume that $G$ is a nontrivial $M$-group; thus $|G|\geq\mathfrak{c}$. If $G$ is bounded, then $G$ is $d$-independent by Theorem~\ref{2022t23}. Suppose that $G$ is unbounded. For each positive integer $k$, let $G_{k}=G[p^{k}]=\{x\in G: p^{k}x=e\}$. Clearly, we have $G_{1}\subset G_{2}\subset\cdots$ and $G=\bigcup_{k\geq 1}G_{k}$. For each non-negative integer $k$, let $H_{k}=p^{k}G$. Since $G$ is unbounded $M$-group, it follows that $|H_{k}|\geq\mathfrak{c}$, then $H_{k}[p]$ of $H_{k}$ has cardinality at least $\mathfrak{c}$ by \cite[Lemma 9.9.15]{AT2008}. We claim that each $G_{k}$ is an $M$-group. Indeed, fix any positive integer $k$. Take any $n\in\mathbb{N}$. Then there exists a biggest integer $m$ such that $p^{m}$ divides $n$. If $m\geq k$, then $nG_{k}=\{e\}$; otherwise, assume that $m<k$, then it is easily verified that $H_{m}[p]\subset p^{m}G_{k}=nG_{k}$, which shows that $|nG_{k}|\geq\mathfrak{c}$ since $|H_{m}[p]|\geq\mathfrak{c}$. Therefore, each $G_{k}$ is an $M$-group. Next we prove that $G$ is $d$-independent.

Take any subgroup $S$ of $G$ with $|S|<\mathfrak{c}$. By induction, we can define a sequence of countable dense subgroups $\{D_{n}: n\in\omega\}$ such that the following conditions are satisfied:

\smallskip
(1) $D_{0}=\{e\}$;

\smallskip
(2) each $D_{k}$ is a dense subgroup of $G_{k}$;

\smallskip
(3) $D_{k}\cap (S+D_{1}+\cdots+D_{k-1})=\{e\}$.

Now put $D=\bigcup_{n\geq 1}(D_{1}+\cdots +D_{n})$. Then $D$ is a countable dense subgroup of $G$ and $D\cap S=\{e\}$. Therefore, $G$ is $d$-independent.
\end{proof}

By a similar proof of \cite[Theorem 2.26]{RT2021}, we have the following result.

\begin{theorem}\label{2022t33}
Let $G$ be a semitopological abelian group with a countable $\pi$-network $\mathcal{N}$. If each element $N\in\mathcal{N}$ contains an independent subset of size $\mathfrak{c}$, then $G$ is $d$-independent.
\end{theorem}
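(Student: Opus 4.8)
The plan is to follow the scheme of \cite[Theorem 2.26]{RT2021}: fix an arbitrary subgroup $S$ of $G$ with $|S|<\mathfrak{c}$ and construct, by a recursion of length $\omega$ indexed by the $\pi$-network, a countable dense subgroup $H$ with $S\cap H=\{e\}$; producing such an $H$ for every small $S$ is exactly the definition of $d$-independence. Enumerate the countable $\pi$-network as $\mathcal{N}=\{N_{n}: n\in\omega\}$, and for each $n$ fix an independent subset $A_{n}\subseteq N_{n}$ with $|A_{n}|=\mathfrak{c}$, which exists by hypothesis. The key algebraic observation is that independence forces $\langle x\rangle\cap\langle y\rangle=\{e\}$ for distinct $x,y\in A_{n}$: if $mx=ky$ lies in the intersection, then $mx-ky=e$ and independence yields $mx=e=ky$, so the common value is $e$.

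Next I would run the recursion. Having chosen $x_{0},\ldots,x_{n-1}$, set $S_{n}=\langle S\cup\{x_{0},\ldots,x_{n-1}\}\rangle$; since $|S|<\mathfrak{c}$ and only finitely many generators are adjoined, $|S_{n}|<\mathfrak{c}$. Because $A_{n}$ has cardinality $\mathfrak{c}$ and its distinct members generate cyclic subgroups meeting only in $e$, Lemma~\ref{2022l3} applies and yields an element $x_{n}\in A_{n}\subseteq N_{n}$ with $\langle x_{n}\rangle\cap S_{n}=\{e\}$. This is the single place where both hypotheses---the size-$\mathfrak{c}$ independent subsets inside each $\pi$-network member and the smallness of $S$---are used together, and it is the engine of the whole argument.

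Finally I would set $H=\langle\{x_{n}: n\in\omega\}\rangle$ and verify the three required properties. Countability of $H$ is immediate. Density follows from the $\pi$-network property: every nonempty open $U$ contains some $N_{n}$, hence contains $x_{n}\in H$, so $H$ meets every nonempty open set. For the crucial disjointness $S\cap H=\{e\}$, suppose $e\neq h\in S\cap H$ and let $k$ be the least index with $h\in\langle x_{0},\ldots,x_{k}\rangle$, so that $h\notin\langle x_{0},\ldots,x_{k-1}\rangle$; writing $h=g+mx_{k}$ with $g\in\langle x_{0},\ldots,x_{k-1}\rangle$ and $mx_{k}\neq e$, both $h\in S\subseteq S_{k}$ and $g\in S_{k}$ give $mx_{k}=h-g\in S_{k}\cap\langle x_{k}\rangle=\{e\}$, a contradiction. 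I expect the main obstacle to be purely the bookkeeping in this last verification rather than any deep idea: one must arrange the recursion so that $S$ together with all previously chosen generators is absorbed into $S_{k}$ at each stage, so that the single-step disjointness $\langle x_{k}\rangle\cap S_{k}=\{e\}$ can be promoted to global disjointness of $H$ from $S$.
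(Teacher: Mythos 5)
Your proposal is correct and takes essentially the same route as the paper, which itself only points to the scheme of \cite[Theorem 2.26]{RT2021}: enumerate the countable $\pi$-network, use independence to get pairwise trivial cyclic intersections inside each $A_{n}$, apply Lemma~\ref{2022l3} recursively against $S_{n}=\langle S\cup\{x_{0},\dots,x_{n-1}\}\rangle$, and finish with the minimal-index argument to promote stepwise disjointness to $S\cap H=\{e\}$. Every step, including the verification that $\langle x\rangle\cap\langle y\rangle=\{e\}$ for distinct $x,y\in A_{n}$ and the density of $H$ from the $\pi$-network property, is sound.
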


\begin{lemma}\label{2022l44}
Let $G$ be a paratopological group with a countable network. If $|G|=\kappa$ and $cf(\kappa)>\omega$, then $G$ has a countable network $\mathcal{N}$ such that $|\mathcal{N}|=\kappa$ for each $N\in\mathcal{N}$.
\end{lemma}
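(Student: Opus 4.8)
The plan is to read the conclusion in the only sensible way, namely that $G$ has a countable network $\mathcal{N}$ every member of which satisfies $|N|=\kappa$ (the displayed ``$|\mathcal{N}|=\kappa$'' being a slip for ``$|N|=\kappa$''). The whole argument rests on one cardinality fact followed by a cheap union trick, so I would split it into two stages.

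\emph{Stage 1: every nonempty open subset of $G$ has cardinality $\kappa$.} Since $G$ carries a countable network, it is Lindel\"of (in fact hereditarily Lindel\"of; see \cite{E1989}). Fix a nonempty open $U$ and a point $x\in U$. As $G$ is a paratopological group, the left translation $y\mapsto x^{-1}y$ is a homeomorphism, so $V=x^{-1}U$ is an open neighbourhood of $e$ with $|V|=|U|$. The translates $\{gV:g\in G\}$ cover $G$ (because $e\in V$, so $g\in gV$), and by the Lindel\"of property countably many of them, say $\{g_iV:i\in\omega\}$, already cover $G$. Hence $\kappa=|G|\le\sum_{i\in\omega}|g_iV|=\omega\cdot|V|$. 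Since $cf(\kappa)>\omega$ forces $\kappa$ to be uncountable, $|V|$ must be infinite and $\omega\cdot|V|=|V|$, giving $\kappa\le|V|=|U|\le|G|=\kappa$, whence $|U|=\kappa$. I expect this to be the main obstacle, because without it one cannot fatten a small network element at all: in a Hausdorff (so $T_1$) space the intersection of all open supersets of a finite set is the set itself, so there is no room to enlarge it while keeping it inside arbitrarily small neighbourhoods. The size of the open sets is exactly what manufactures that room, and it is precisely here that the paratopological structure and the hypothesis $cf(\kappa)>\omega$ enter.

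\emph{Stage 2: the construction.} Let $\{M_n:n\in\omega\}$ be a countable network. For any nonempty open $U$ the network property gives $U=\bigcup\{M_n:M_n\subseteq U\}$, a countable union; since $|U|=\kappa$ by Stage 1 and $cf(\kappa)>\omega$, a countable union of sets each of size $<\kappa$ could not reach $\kappa$, so at least one $M_n\subseteq U$ has $|M_n|=\kappa$. In particular \emph{every} nonempty open set contains a network element of cardinality $\kappa$. I would then set
\[
\mathcal{N}=\{\,M_m\cup M_n:\ m,n\in\omega,\ |M_n|=\kappa\,\}.
\]
This family is countable, and each member contains a set of size $\kappa$ while lying inside $G$, so $|M_m\cup M_n|=\kappa$. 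To verify it is a network, take $x$ and an open $U\ni x$: the original network supplies $m$ with $x\in M_m\subseteq U$, and the remark above supplies $n$ with $M_n\subseteq U$ and $|M_n|=\kappa$; then $M_m\cup M_n\in\mathcal{N}$ and $x\in M_m\cup M_n\subseteq U$, as required.

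A final word on strategy: the tempting shortcut of simply discarding from $\{M_n\}$ every element of cardinality $<\kappa$ fails, since the surviving large elements contained in $U$ need not contain the prescribed point $x$. Forming the union $M_m\cup M_n$ repairs this at no cost, because adjoining the small point-carrying set $M_m$ keeps the member inside $U$ and cannot drop its cardinality below $\kappa$.
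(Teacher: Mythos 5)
Your proof is correct, and your reading of the conclusion (the statement's ``$|\mathcal{N}|=\kappa$'' should be ``$|N|=\kappa$ for each $N\in\mathcal{N}$'') matches what the paper actually proves; but your construction genuinely differs from the paper's. The paper establishes your Stage~1 only for neighbourhoods of the identity (which suffices, by translation) and by contradiction rather than via Lindel\"ofness: if some neighbourhood of $e$ had size $<\kappa$, then every point would have such a neighbourhood, so the network elements of size $<\kappa$ would cover $G$, exhibiting $G$ as a countable union of sets of size $<\kappa$ and contradicting $cf(\kappa)>\omega$. The real divergence is in Stage~2: where you take unions $M_m\cup M_n$, the paper takes \emph{products}, setting $\mathcal{H}=\{D_1D_2:\ D_1\in\mathcal{D},\ D_2\in\mathcal{D}^{\ast}\}$ with $\mathcal{D}^{\ast}$ the family of network elements of size $\kappa$; given $x\in U$ it picks $V$ with $xV^2\subset U$, then $C_1\in\mathcal{D}$ with $x\in C_1\subset xV$ and $C_2\in\mathcal{D}^{\ast}$ with $C_2\subset V$, and concludes $x=xe\in C_1C_2\subset xV^2\subset U$. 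Your union trick is in fact the safer route: the paper's step $x=xe\in C_1C_2$ tacitly assumes $e\in C_2$, which nothing guarantees ($C_2$ is just a network element containing some point $y\in V$); repairing it requires replacing $D_1D_2$ by something like $D_1\cup D_1D_2$. Your construction, by contrast, uses no multiplication at all after Stage~1 --- the group structure is quarantined in the proof that nonempty open sets have size $\kappa$, and your intermediate observation that \emph{every} nonempty open set (not merely a neighbourhood of $e$) contains a network element of full size $\kappa$ is exactly what lets a union of two network elements do the job the paper asks of a product. A pleasant by-product of your decomposition is that Stage~2 is purely topological: it applies verbatim to any space with a countable network in which all nonempty open sets have cardinality $\kappa$ with $cf(\kappa)>\omega$.
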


\begin{proof}
Let $\mathcal{D}$ be a countable network for $G$. Put $\mathcal{D}^{\ast}=\{D\in \mathcal{D}: |D|=\kappa\}$. Since $|G|=\kappa$ and $cf(\kappa)>\omega$, it follows that $\mathcal{D}^{\ast}\neq\emptyset$ and $|\bigcup(\mathcal{D}\setminus\mathcal{D}^{\ast})|<\kappa$. We claim that $|V|=\kappa$ for each open neighborhood $V$ of $e$. Suppose not, we can assume that $G$ a base $\mathcal{B}$ of $G$ such that $|B|<\kappa$ for each element $B$ of $\mathcal{B}$. Then it is easy to see that $G$ has a countable network $\mathcal{N}$ such that $|B|<\kappa$ for each $B\in\mathcal{N}$, which is a contradiction with $|G|=\kappa$. Therefore, we can find a base $\mathcal{B}$ of $G$ such that $|B|=\kappa$ for each element $B$ of $\mathcal{B}$. Let $\mathcal{H}=\{D_{1}D_{2}: D_{1}\in\mathcal{D}, D_{2}\in\mathcal{D}^{\ast}\}$. Clearly, $\mathcal{H}$ is countable. Next we prove that $\mathcal{H}$ is a network for $G$.

Indeed, take an arbitrary element $x\in G$ and an open neighborhood $U$ of $x$ in $G$. Then there exists an open neighborhood $V$ of $e$ in $G$ such that $xV^{2}\subset U$. Hence there exists $C_{1}\in\mathcal{D}$ such that $x\in C_{1}\subset xV$. Moreover, it is obvious that $|V|=\kappa$, hence $V\setminus(\bigcup(\mathcal{D}\setminus\mathcal{D}^{\ast}))\neq\emptyset$. Take any $y\in V\setminus(\bigcup(\mathcal{D}\setminus\mathcal{D}^{\ast}))$; then there exists $C_{2}\in \mathcal{D}^{\ast}$ such that $y\in C_{2}\subset V$. Therefore, we have $x=xe\in C_{1}C_{2}\subset xVV\subset U$. Therefore, $\mathcal{H}$ is a countable network for $G$.
\end{proof}

\begin{theorem}
Let $G$ be an almost torsion-free paratopological abelian group with a countable network. If $|G|=\mathfrak{c}$, then $G$ is $d$-independent.
\end{theorem}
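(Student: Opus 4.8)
The plan is to reduce the statement to the criterion of Theorem~\ref{2022t33}. Since $|G|=\mathfrak{c}$ and $\mathrm{cf}(\mathfrak{c})>\omega$ by K\"onig's theorem, Lemma~\ref{2022l44} applies and yields a countable network $\mathcal{N}$ for $G$ with $|N|=\mathfrak{c}$ for every $N\in\mathcal{N}$. Every network is a $\pi$-network: given a nonempty open set $U$, pick $x\in U$ and $N\in\mathcal{N}$ with $x\in N\subset U$, so $N\subset U$. Thus $\mathcal{N}$ is a countable $\pi$-network, and by Theorem~\ref{2022t33} it suffices to prove that each $N\in\mathcal{N}$ contains an independent subset of cardinality $\mathfrak{c}$. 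This is the entire content of the argument, and it is the step I expect to require the most care.

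For this key step I would first exploit almost torsion-freeness. Since each $G[n]$ is finite, the torsion subgroup $tor(G)=\bigcup_{n\geq 1}G[n]$ is countable, and $A=G/tor(G)$ is torsion-free. Fix $N\in\mathcal{N}$ and let $\pi\colon G\to A$ be the quotient homomorphism. Put $N_{0}=N\setminus tor(G)$; since $|tor(G)|\leq\omega<\mathfrak{c}$ we have $|N_{0}|=\mathfrak{c}$, and every element of $N_{0}$ has infinite order. The fibers of $\pi\uhr N_{0}$ are intersections of $N_{0}$ with cosets of $tor(G)$, hence countable, so $|\pi(N_{0})|=\mathfrak{c}$; moreover $\pi(N_{0})\subset A\setminus\{0\}$ because $N_{0}$ avoids $tor(G)$.

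Next I would produce an independent set inside the torsion-free group $A$ and lift it. Passing to the divisible hull $V=A\otimes\mathbb{Q}$, a subset of $A$ consisting of elements of infinite order is independent in the sense of the paper exactly when it is $\mathbb{Q}$-linearly independent in $V$. Let $I\subset\pi(N_{0})$ be a maximal $\mathbb{Q}$-linearly independent subset; then $I$ is a basis of $\mathrm{span}_{\mathbb{Q}}\pi(N_{0})$, and since $\pi(N_{0})\subset\mathrm{span}_{\mathbb{Q}}\pi(N_{0})$ we get $\mathfrak{c}=|\pi(N_{0})|\leq|\mathrm{span}_{\mathbb{Q}}\pi(N_{0})|=\max(|I|,\omega)$, which forces $|I|=\mathfrak{c}$ (and $|I|\leq\mathfrak{c}$ is clear). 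For each $v\in I$ choose $b_{v}\in N_{0}$ with $\pi(b_{v})=v$, and set $B=\{b_{v}:v\in I\}\subset N$; then $\pi\uhr B$ is a bijection onto $I$ and $|B|=\mathfrak{c}$.

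Finally I would verify that $B$ is independent in $G$: given distinct $b_{v_{1}},\dots,b_{v_{m}}\in B$ and integers $n_{1},\dots,n_{m}$ with $\sum_{i}n_{i}b_{v_{i}}=e$, applying $\pi$ gives $\sum_{i}n_{i}v_{i}=0$ with the $v_{i}$ distinct in $I$, so the $\mathbb{Q}$-independence of $I$ yields $n_{i}=0$ for all $i$, whence $n_{i}b_{v_{i}}=e$; thus $B$ is independent. Since $B\subset N$ has cardinality $\mathfrak{c}$, every $N\in\mathcal{N}$ contains an independent subset of size $\mathfrak{c}$, and Theorem~\ref{2022t33} concludes that $G$ is $d$-independent. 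I expect the main obstacle to be the bookkeeping in this last block---transporting independence through the torsion quotient while keeping the independent set inside the prescribed network element $N$---rather than any deep difficulty, since the cardinality arithmetic and the divisible-hull reformulation become routine once $tor(G)$ is observed to be countable.
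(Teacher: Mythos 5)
Your proposal is correct, and at the structural level it is the same proof as the paper's: both apply Lemma~\ref{2022l44} (you rightly note that its hypothesis $cf(\mathfrak{c})>\omega$ holds by K\"onig's theorem, which the paper leaves implicit) and then conclude via Theorem~\ref{2022t33}. The real difference is the key step, namely that each network element $N$ with $|N|=\mathfrak{c}$ contains an independent subset of cardinality $\mathfrak{c}$. The paper dispatches this by asserting that the conclusion ``follows from Corollary~\ref{2022cc} and Theorem~\ref{2022t33}'', but Corollary~\ref{2022cc} as stated only concludes $d$-independence of a whole regular dense-subgroup-separable group from the hypothesis $|G|\geq\mathfrak{c}$, $|tor(G)|<\mathfrak{c}$; it does not literally produce an independent set inside a prescribed subset $N$ (an independent family of size $\mathfrak{c}$ in $G$, or even in $\langle N\rangle$, need not lie in $N$), and its regularity hypothesis is not even assumed in the present theorem. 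Your torsion-quotient argument --- $tor(G)$ is countable by almost torsion-freeness, $\pi(N\setminus tor(G))$ has size $\mathfrak{c}$ in the torsion-free quotient because the fibers are countable, a maximal $\mathbb{Q}$-independent subset of it has size $\mathfrak{c}$ by counting its $\mathbb{Q}$-span in the divisible hull, and lifting it back stays inside $N$ while integer relations push down injectively (here injectivity of $A\to A\otimes\mathbb{Q}$ for torsion-free $A$ is what makes the final verification of independence in the paper's sense go through) --- supplies exactly the content the paper elides, and every step of it checks out. In short, yours is a complete and self-contained version of the paper's two-line proof, filling in the one step on which the whole argument hinges.
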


\begin{proof}
By Lemma~\ref{2022l44}, $G$ has a countable network $\mathcal{N}$ such that $|N|=\mathfrak{c}$ for every $N\in\mathcal{N}$. Then the required conclusion follow from Corollary~\ref{2022cc} and Theorem~\ref{2022t33}.
\end{proof}

\begin{corollary}\label{2022c2024}
A separable metrizable almost torsion-free paratopological abelian group $G$ with $|G|=\mathfrak{c}$ is $d$-independent.
\end{corollary}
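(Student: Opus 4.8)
The plan is to derive this corollary directly from the theorem immediately preceding it, so that the entire substance of the argument reduces to verifying that a separable metrizable group meets that theorem's hypotheses. The only point that requires comment is the production of a countable network; everything else is inherited automatically from metrizability.

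First I would invoke the classical metrization fact that, for metrizable spaces, separability and second countability coincide (see \cite{E1989}). Thus a separable metrizable $G$ possesses a countable base $\mathcal{B}$, and since any base is in particular a network, $G$ is a cosmic space with a countable network. At the same time, metrizability makes $G$ regular, while second countability makes $G$ hereditarily separable, hence dense-separable and a fortiori dense-subgroup-separable. In particular all the standing regularity and dense-subgroup-separability assumptions threaded through Theorem~\ref{2022t11}, Corollary~\ref{2022cc}, and Theorem~\ref{2022t33} (the tools used in the preceding theorem) are satisfied without further work.

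With the countable network secured, $G$ is precisely an almost torsion-free paratopological abelian group with a countable network and $|G|=\mathfrak{c}$, which is exactly the hypothesis of the theorem stated just above; its conclusion delivers that $G$ is $d$-independent, finishing the proof. As for the main obstacle: there is essentially none, since the statement is a specialization of the theorem above. The single step worth flagging is the use of the equivalence ``separable metrizable $\Leftrightarrow$ second countable'' to obtain the countable network, for without metrizability a separable paratopological abelian group need not be cosmic, so it is exactly this hypothesis that does the work in bridging to the preceding theorem.
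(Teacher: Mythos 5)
Your proposal is correct and follows the same route as the paper: the corollary is stated as an immediate consequence of the preceding theorem, with the only substantive step being that a separable metrizable space is second countable and hence has a countable network, so the theorem on almost torsion-free paratopological abelian groups with a countable network and $|G|=\mathfrak{c}$ applies directly. Your extra remarks about regularity and dense-subgroup-separability are harmless but unnecessary, since those hypotheses belong to the proof of the theorem rather than to its statement, which you are using as a black box.
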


We say that a topological group has {\it no small subgroups} if there is a neighborhood $U$ of the neutral element $e$ such that for each subgroup $H$ of $G$ the relation $H\subset U$ implies $H=\{e\}$. A compact group $G$ is called a {\it compact Lie group} if it has no small subgroups.

\begin{corollary}
If an infinite compact abelian group $G$ is a compact Lie group, then $G$ is $d$-independent.
\end{corollary}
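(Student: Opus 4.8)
The plan is to reduce the statement to Corollary~\ref{2022c2024} by means of the structure theory of compact abelian Lie groups. First I would invoke the fact that a compact group with no small subgroups is a Lie group, and that a compact abelian Lie group $G$ is topologically isomorphic to $\mathbb{T}^{n}\times F$ for some $n\geq 0$ and some finite abelian group $F$ (see \cite{HM1998}); the connected component of the identity is the torus $\mathbb{T}^{n}$ and the component group $G/\mathbb{T}^{n}\cong F$ is finite because a compact Lie group has finitely many connected components. Since $G$ is assumed infinite and $F$ is finite, we must have $n\geq 1$, so that $|G|=\mathfrak{c}$.

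Next I would record the topological properties needed to apply the corollary. Being a Lie group, $G$ is metrizable; a compact metrizable space is separable, so $G$ is separable metrizable and in particular a (para)topological abelian group to which the hypotheses of Corollary~\ref{2022c2024} potentially apply. It remains to verify that $G$ is almost torsion-free, i.e. that $G[m]=\{g\in G:mg=e\}$ is finite for every $m\in\mathbb{N}$. Writing $G=\mathbb{T}^{n}\times F$, an element $(t,f)$ lies in $G[m]$ exactly when $mt=0$ in $\mathbb{T}^{n}$ and $mf=e$ in $F$; since the $m$-torsion of $\mathbb{T}$ consists of the $m$-th roots of unity, $\mathbb{T}[m]$ has exactly $m$ elements, so $G[m]=(\mathbb{T}[m])^{n}\times F[m]$ has at most $m^{n}\cdot|F|$ elements, which is finite. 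Hence $G$ is almost torsion-free.

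With separability, metrizability, almost torsion-freeness, and $|G|=\mathfrak{c}$ all in hand, Corollary~\ref{2022c2024} applies directly and yields that $G$ is $d$-independent, completing the proof. As an alternative final step one could instead argue that $r_{0}(\mathbb{T})=\mathfrak{c}$ gives $r_{0}(G)\geq\mathfrak{c}$, that $G$ is hereditarily separable (being second countable) and therefore dense-subgroup-separable, and then invoke Theorem~\ref{2022t22}; but routing through Corollary~\ref{2022c2024} is the most economical.

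The only genuinely substantive point is the appeal to the classification $G\cong\mathbb{T}^{n}\times F$, which packages both the absence of small subgroups and the compactness into a concrete form; everything after that is a routine verification of the cardinal and torsion hypotheses of the previous corollary. Thus I expect no serious obstacle, the main care being to cite the structure theorem correctly and to confirm $n\geq 1$ from the infinitude of $G$.
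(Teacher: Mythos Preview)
Your proposal is correct and follows essentially the same route as the paper: invoke the structure theorem $G\cong\mathbb{T}^{n}\times F$ with $F$ finite (hence $n\geq 1$ from infinitude), observe that $G$ is separable metrizable, almost torsion-free, and of cardinality $\mathfrak{c}$, and then apply Corollary~\ref{2022c2024}. The paper's proof is terser, merely asserting these properties as ``clear,'' while you spell out the torsion computation and the cardinality argument explicitly, but the logical skeleton is identical.
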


\begin{proof}
From \cite[Proposition 2.42]{HM1998}, $G$ is isomorphic to $\mathbb{T}^{n}\times E$ for some positive integer and finite abelian group. Clearly, $G$ is separable metrizable, almost torsion-free and $|G|=\mathfrak{c}$. By Corollary~\ref{2022c2024}, $G$ is $d$-independent.
\end{proof}

\begin{problem}\cite[Problem 2.35]{RT2021}
Can one drop ``abelian'' in Corollary~2.34? What if $G$ is compact?
\end{problem}

\begin{problem}\cite[Problem 2.36]{RT2021}
Let $G$ be a nontrivial compact connected group with $w(G)\leq\mathfrak{c}$. Must $G$ be $d$-independent?
\end{problem}

\begin{problem}\cite[Problem 2.39]{RT2021}
Is every connected separable metrizable topological abelian group $G$ with $|G|>1$ $d$-independent?
\end{problem}

We say that a topological group $G$ is {\it MAP} iff the family of continuous homomorphisms to unitary
groups separate its points. By \cite{DMT2005}, the topological abelian groups that are either locally compact or precompact is MAP.

\begin{theorem}
Let $G$ be a dense-subgroup-separable MAP abelian group with a nontrivial connected component. Then $G$ is $d$-independent.
\end{theorem}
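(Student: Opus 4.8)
The plan is to reduce everything to Theorem~\ref{2022t22}, which already guarantees that a regular dense-subgroup-separable abelian semitopological group $G$ with $r_{0}(G)\geq\mathfrak{c}$ is $d$-independent. Since every Hausdorff topological group is Tychonoff, the group $G$ is automatically regular, and it is dense-subgroup-separable and abelian by hypothesis; every topological group is in particular a semitopological group. Thus the whole argument reduces to verifying the single arithmetic condition $r_{0}(G)\geq\mathfrak{c}$, and this is the crux.

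To establish $r_{0}(G)\geq\mathfrak{c}$, I would exploit both the MAP property and the nontrivial connected component $c(G)$. First I would observe that, since $G$ is MAP and abelian, the continuous characters $G\to\mathbb{T}$ separate the points of $G$ (for abelian groups the finite-dimensional unitary representations reduce to characters). As $c(G)\neq\{e\}$, I would pick a point $x\in c(G)\setminus\{e\}$ and a continuous character $\chi\colon G\to\mathbb{T}$ with $\chi(x)\neq 1$; then $\chi$ is nontrivial on $c(G)$.

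Next I would analyze the image $\chi(c(G))$. Because $\chi$ is a continuous homomorphism and $c(G)$ is a connected subgroup, $\chi(c(G))$ is a connected subgroup of $\mathbb{T}$; the only connected subgroups of $\mathbb{T}$ are $\{1\}$ and $\mathbb{T}$ itself, so the nontriviality of $\chi$ on $c(G)$ forces $\chi(c(G))=\mathbb{T}$. Hence $c(G)$ admits a surjective homomorphism onto $\mathbb{T}$, and the additivity of torsion-free rank along the short exact sequence $\{e\}\to\ker\chi\to c(G)\to\mathbb{T}\to\{e\}$ yields $r_{0}(c(G))\geq r_{0}(\mathbb{T})=\mathfrak{c}$. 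Since $c(G)$ is a subgroup of $G$, we obtain $r_{0}(G)\geq r_{0}(c(G))\geq\mathfrak{c}$, and Theorem~\ref{2022t22} finishes the proof.

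The main obstacle I anticipate is conceptual rather than computational: one must be careful that the MAP hypothesis on $G$ really delivers a character that is nontrivial on the connected subgroup $c(G)$, and that the passage to the image in $\mathbb{T}$ interacts correctly with connectedness and with torsion-free rank. Once the surjection $c(G)\twoheadrightarrow\mathbb{T}$ is in hand, the rank computation $r_{0}(\mathbb{T})=\mathfrak{c}$ (coming from the abstract isomorphism $\mathbb{T}\cong(\mathbb{Q}/\mathbb{Z})\oplus\bigoplus_{\mathfrak{c}}\mathbb{Q}$) is routine, so no delicate estimates remain.
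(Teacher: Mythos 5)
Your proposal is correct and takes essentially the same route as the paper: both reduce to Theorem~\ref{2022t22} by using the MAP property to produce a character nontrivial on the connected component (the paper gets it as a homomorphism into $\mathbb{T}^{\omega}$ followed by a coordinate projection), observe that its image is a nontrivial connected subgroup of $\mathbb{T}$ and hence all of $\mathbb{T}$, and conclude $r_{0}(G)\geq\mathfrak{c}$. The only cosmetic difference is in transferring the rank bound: the paper explicitly lifts an independent family of size $\mathfrak{c}$ from $\mathbb{T}$ back to $G$ through preimages, while you invoke additivity of torsion-free rank along the short exact sequence; both steps are valid.
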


\begin{proof}
By Theorem~\ref{2022t22}, it suffices to prove that $r_{0}(G)\geq\mathfrak{c}$. Let $C$ be the connected component of $G$; then $|C|>1$ and $C$ is a closed subgroup of $G$.
Pick any $b\in C\setminus\{e\}$. Since $G$ is MAP, it follows from \cite[9.6.a]{AT2008} that there exists a continuous homomorphism $f$ from $G$ to $\mathbb{T}^{\omega}$ such that $f(b)\neq e$, where $e$ is the identity element of $\mathbb{T}^{\omega}$. Then there exists $n_{0}\in\omega$ such that $b_{n_{0}}\neq 1$, where $f(b)=(b_{i})_{i\in\omega}$ and $b_{i}\in\mathbb{T}$ for each $i\in\omega$. Let $\pi_{n_{0}}: \mathbb{T}^{\omega}\rightarrow\mathbb{T}$ be the naturally projective mapping, that is, $\pi_{n_{0}}(x)=\pi_{n_{0}}((x_{i})_{i\in\omega})=x_{n_{0}}$ for each $x\in\mathbb{T}^{\omega}$. Then $\pi_{n_{0}}\circ f(C)$ is a non-trivial connected subgroup of $\mathbb{T}$, hence $\pi_{n_{0}}\circ f(C)=\mathbb{T}$. Therefore, $r_{0}(\pi_{n_{0}}\circ f(C))\geq\mathfrak{c}$. We claim that $r_{0}(G)\geq\mathfrak{c}$. Indeed, let $D$ be an independent subset of $\pi_{n_{0}}\circ f(C)$ with $|D|=\mathfrak{c}$. For each $d\in D$, take any point $g_{d}\in f^{-1}(\pi_{n_{0}}^{-1}(d))$. Put $D^{\prime}=\{g_{d}: d\in D\}$; then $D^{\prime}$ is an independent subset of $G$ with $|D^{\prime}|=\mathfrak{c}$, thus $r_{0}(G)\geq\mathfrak{c}$.
\end{proof}

\begin{corollary}\label{2022ccc}
Let $G$ be a cosmic MAP abelian group with a nontrivial connected component. Then $G$ is $d$-independent.
\end{corollary}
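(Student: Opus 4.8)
The plan is to reduce this corollary directly to the preceding theorem by verifying that the hypothesis ``cosmic'' already delivers ``dense-subgroup-separable,'' so that no genuinely new argument is needed.

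First I would recall the standard fact that a space with a countable network is hereditarily separable. Concretely, if $\mathcal{N}=\{N_{n}:n\in\mathbb{N}\}$ is a countable network of a space $X$ and $Y\subset X$ is an arbitrary subspace, then $\{N_{n}\cap Y:n\in\mathbb{N}\}$ is a countable network of $Y$, and selecting one point from each nonempty member of this family yields a countable dense subset of $Y$. Since $G$ is cosmic, it is regular with a countable network, so by this observation $G$ is hereditarily separable. Every hereditarily separable space is in particular dense-separable, because each dense subspace is itself a subspace and hence separable; and a dense-separable semitopological group is dense-subgroup-separable by definition, since every dense subgroup is a dense subset.

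Having established that $G$ is a dense-subgroup-separable MAP abelian group with a nontrivial connected component, I would then simply invoke the preceding theorem to conclude that $G$ is $d$-independent. I do not expect any real obstacle here: the substantive work, namely producing the torsion-free rank estimate $r_{0}(G)\geq\mathfrak{c}$ from the MAP condition and the nontrivial connected component (via a continuous homomorphism onto $\mathbb{T}^{\omega}$ detecting a point of $C$, and pulling back an independent subset of $\mathbb{T}$ of size $\mathfrak{c}$) and then applying Theorem~\ref{2022t22}, has already been carried out in the proof of the theorem. The entire content of the corollary is the purely topological remark that cosmic groups are dense-subgroup-separable, after which the conclusion is immediate.
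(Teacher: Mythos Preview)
Your proposal is correct and matches the paper's approach: the corollary is stated without proof immediately after the theorem, so the intended argument is precisely the observation you supply, namely that a cosmic space is hereditarily separable (hence dense-separable, hence dense-subgroup-separable), after which the theorem applies directly. Your explanation of why cosmic implies hereditarily separable is accurate, and no further work is needed.
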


\begin{corollary}\cite[Corollary 2.33]{RT2021}
Let $G$ be a second-countable locally compact abelian group with a nontrivial connected component. Then $G$ is $d$-independent.
\end{corollary}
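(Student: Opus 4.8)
The plan is to obtain this result as an immediate specialization of Corollary~\ref{2022ccc}, so the only work is to check that a second-countable locally compact abelian group $G$ meets its three hypotheses: that $G$ is cosmic, that $G$ is MAP, and that $G$ has a nontrivial connected component.

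First I would verify that $G$ is cosmic. Being second-countable, $G$ possesses a countable base, and such a base is in particular a countable network. Since $G$ is a locally compact Hausdorff topological group, it is completely regular, hence regular. Combining these two observations, $G$ is a regular space with a countable network, which is precisely the definition of a cosmic space.

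Next I would check that $G$ is MAP. As recorded in the excerpt via \cite{DMT2005}, every topological abelian group that is locally compact (or precompact) is MAP; since $G$ is locally compact abelian, this hypothesis holds automatically. The remaining hypothesis---the existence of a nontrivial connected component---is carried over verbatim from the statement. With all three conditions verified, Corollary~\ref{2022ccc} applies and yields that $G$ is $d$-independent.

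I do not anticipate any genuine obstacle, since the statement is essentially a specialization of an already-established corollary; the only points requiring a moment of care are the two standard facts that local compactness of a Hausdorff topological group forces (complete) regularity, and that, through the cited result, local compactness forces the MAP property. Everything else is a direct invocation of Corollary~\ref{2022ccc}.
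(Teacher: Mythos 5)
Your proposal is correct and matches the paper's intended derivation: the paper states this result as an immediate corollary of Corollary~\ref{2022ccc}, relying on exactly the two observations you make, namely that a second-countable Hausdorff topological group is cosmic (a countable base is a countable network, and topological groups are regular) and that locally compact abelian groups are MAP by the cited result of \cite{DMT2005}. No gap; your specialization is precisely the paper's (unwritten) proof.
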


\begin{corollary}
Each subgroup with a nontrivial connected component of a cosmic precompact abelian group is $d$-independent.
\end{corollary}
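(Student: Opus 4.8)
The plan is to deduce this directly from Corollary~\ref{2022ccc}, which guarantees that any cosmic MAP abelian group with a nontrivial connected component is $d$-independent. Thus, writing $H$ for a subgroup with nontrivial connected component of a cosmic precompact abelian group $G$, it suffices to show that $H$ inherits all the hypotheses needed to invoke that corollary, namely that $H$ is cosmic, abelian, MAP, and has a nontrivial connected component.

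Two of these requirements are immediate: $H$ is abelian as a subgroup of the abelian group $G$, and its connected component is nontrivial by assumption. That $H$ is cosmic I would check by recalling that ``cosmic'' means regular with a countable network, and that both conditions are hereditary to subspaces; in particular, if $\mathcal{P}$ is a countable network for $G$, then $\{P\cap H:P\in\mathcal{P}\}$ is a countable network for $H$, and regularity passes to the subspace $H$.

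The only step requiring a genuine argument is that $H$ is precompact, and hence MAP. Here I would first establish the standard fact that a subgroup of a precompact group is precompact: given an open neighborhood $V=U\cap H$ of $e$ in $H$ with $U$ a neighborhood of $e$ in $G$, pick a symmetric open $W$ in $G$ with $WW\subset U$ and, using precompactness of $G$, a finite set $F\subset G$ with $FW=G$; selecting a point $h_{f}\in fW\cap H$ whenever this intersection is nonempty, one checks that $h_{f}^{-1}h\in U\cap H=V$ for any $h\in fW\cap H$, so that $H=\bigcup_{f}h_{f}V$ is covered by finitely many translates of $V$ inside $H$, whence $H$ is precompact. Since $H$ is then precompact abelian, it is MAP by \cite{DMT2005}. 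With $H$ now recognized as a cosmic MAP abelian group with a nontrivial connected component, Corollary~\ref{2022ccc} applies and yields that $H$ is $d$-independent. The main (and essentially only nonroutine) obstacle is this subgroup-precompactness verification; the remaining properties hold hereditarily and are routine to confirm.
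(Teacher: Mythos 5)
Your proposal is correct and takes essentially the same route as the paper: check that $H$ is cosmic (both regularity and having a countable network are hereditary), that $H$ is precompact and hence MAP by \cite{DMT2005}, and then invoke Corollary~\ref{2022ccc}. The only difference is that you prove the subgroup-precompactness fact by hand (correctly), whereas the paper simply cites \cite[Proposition 3.7.4]{AT2008} for it.
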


\begin{proof}
Let $G$ be cosmic precompact abelian group, and let $H$ be a subgroup of $G$ such that $H$ has a nontrivial connected component. Clearly, $H$ is cosmic. Then it follows from \cite[Proposition 3.7.4]{AT2008} that $H$ is precompact, thus $H$ is MAP. Then $H$ is $d$-independent by Corollary~\ref{2022ccc}.
\end{proof}

\begin{remark}
It is well known that under the assumption of set-theory there exists a pseudocompact, connected, hereditarily separable, non-metrizable topological abelian group $G$ (see \cite{DD2005} or \cite{HJ1976}), thus $G$ is dense-separable which shows that $G$ is $d$-independent; however, $G$ is not locally compact.
\end{remark}

By a similar proof of \cite[Lemma 2.37]{RT2021}, we have the following lemma. For the convenience of the authors, we give out the proof.

\begin{lemma}\label{2022lll}
Suppose $G$ is a nontrivially pseudocompact connected abelian group such that $|G^{\wedge}|\leq w(G)\leq\mathfrak{c}$, where $G^{\wedge}$ is the dual group of $G$ which is endowed with the pointwise convergence topology on elements of $G$. Then $G$ has an independent set $\{d_{\alpha}: \alpha<\mathfrak{c}\}$ such that the cyclic subgroup $\langle d_{\alpha}\rangle$ is dense in $G$, for each $\alpha<\mathfrak{c}$.
\end{lemma}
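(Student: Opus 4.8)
The plan is to pass to the compact completion and phrase everything through Pontryagin duality, and then to build the family by a transfinite recursion of length $\mathfrak{c}$ that exploits pseudocompactness of $G$ in the form of $G_{\delta}$-density. Since $G$ is a pseudocompact Hausdorff abelian group it is precompact, so its Ra\u{\i}kov completion $K$ is a compact abelian group in which $G$ is dense; as $G$ is connected, so is $K$. By the Comfort--Ross theorem, pseudocompactness of $G$ is equivalent to $G$ being $G_{\delta}$-dense in $K$, i.e. $G$ meets every nonempty $G_{\delta}$-subset of $K$. Continuous characters of $G$ extend uniquely to $K$, so $G^{\wedge}\cong K^{\wedge}$; since $K$ is connected, $K^{\wedge}$ is torsion-free, and because $w(G)=w(K)$ (as $G$ is dense in $K$) and $|K^{\wedge}|=w(K)$ for an infinite compact group, the hypothesis $|G^{\wedge}|\leq w(G)\leq\mathfrak{c}$ just records that $K^{\wedge}$ is a torsion-free abelian group of cardinality $\leq\mathfrak{c}$. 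Using $\overline{\langle d\rangle}^{G}=\overline{\langle d\rangle}^{K}\cap G$ for $d\in G$, together with the annihilator description $\overline{\langle d\rangle}^{K}=\bigcap_{\chi(d)=1}\ker\chi$ valid in precompact abelian groups, the problem reduces to producing an independent family $\{d_{\alpha}:\alpha<\mathfrak{c}\}\subseteq G$ of \emph{topological generators} of $K$, i.e. elements $d$ with $\chi(d)\neq 1$ for every $\chi\in K^{\wedge}\setminus\{1\}$ (equivalently, the evaluation $\chi\mapsto\chi(d)$ is an injective homomorphism $K^{\wedge}\to\mathbb{T}$).

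Next I would reduce the independence requirement to a single coordinate. Fix a nontrivial, hence surjective (by connectedness), continuous character $\pi\colon K\to\mathbb{T}$. If the images $\{\pi(d_{\alpha})\}$ together with $1$ are $\mathbb{Q}$-linearly independent in $\mathbb{R}$ (after lifting), then any relation $\sum n_{i}d_{\alpha_{i}}=e$ pushes forward to $\sum n_{i}\pi(d_{\alpha_{i}})=1$, forcing all $n_{i}=0$; since each topological generator has infinite order, the family is then automatically independent in the sense required. Thus it suffices, at stage $\alpha$ of a recursion of length $\mathfrak{c}$, to produce $d_{\alpha}\in G$ that is a topological generator of $K$ and whose image $\pi(d_{\alpha})$ avoids the set $E_{\alpha}$ of values that would destroy $\mathbb{Q}$-independence over the previously chosen ones; as $E_{\alpha}$ is generated by $<\mathfrak{c}$ data, one has $|E_{\alpha}|<\mathfrak{c}$.

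The core of the argument, and the step I expect to be the main obstacle, is the existence at each stage of a topological generator of $K$ lying in the merely $G_{\delta}$-dense subgroup $G$ and with prescribed behaviour under $\pi$. The difficulty is twofold: ``being a topological generator'' is the intersection of up to $\mathfrak{c}$ open conditions, one per character, and is \emph{not} itself a $G_{\delta}$-condition, so it cannot be read off directly from the $G_{\delta}$-density of $G$; and the naive cardinality bookkeeping that works for the independence constraint fails for the generation constraint, because the torsion subgroups $K[n]$ can be as large as $K$, so the ``bad'' set of non-independent candidates need not be small. I would build the element $d_{\alpha}$ as an evaluation character $\widehat{d_{\alpha}}\colon K^{\wedge}\to\mathbb{T}$ by recursion over a well-ordering of the torsion-free group $K^{\wedge}$, interleaving two kinds of constraints: using the divisibility (injectivity) of $\mathbb{T}$ to extend the partial homomorphism while discarding, at each coordinate, only countably many values, so as to keep it injective and to respect the prescribed $\pi$-value; and, at each countable stage, using that the partial data determine a nonempty $G_{\delta}$-subset of $K$, which by $G_{\delta}$-density meets $G$, to keep the construction realizable inside $G$. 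Carrying out the bookkeeping so that the limit character is injective (hence a topological generator) and corresponds to a genuine element of $G$ is the delicate point, handled as in \cite[Lemma 2.37]{RT2021}.

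Finally I would separate out the metrizable case $w(G)=\omega$ first, since there the machinery above degenerates: $G_{\delta}$-density forces $G=K$ to be a compact metrizable connected monothetic group of cardinality $\mathfrak{c}$, and a Hamel-basis argument over $\mathbb{Q}$ (extending $\{1\}$ and reading off $\mathfrak{c}$ irrational, $\mathbb{Q}$-independent coordinates through $\pi$) yields the required $\mathfrak{c}$ independent topological generators directly, so that only the non-metrizable case genuinely needs the $G_{\delta}$-density construction.
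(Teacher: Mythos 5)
Your opening reductions are sound: passing to the compact completion $K$ via Comfort--Ross, observing that $\langle d\rangle$ is dense in $G$ exactly when $d\in G$ is a topological generator of $K$, and enforcing independence through $\mathbb{Q}$-independence of lifted images under one fixed surjective character $\pi\colon K\to\mathbb{T}$. But the proof fails at precisely the step you flag as the main obstacle, and the failure is not a matter of delicate bookkeeping: it cannot be repaired. $G_{\delta}$-density of $G$ in $K$ does let you realize inside $G$ any countable consistent family of conditions $\chi(k)=t_{\chi}$ (such a family cuts out a nonempty closed $G_{\delta}$ in $K$, since each $\ker\chi$ is a closed $G_{\delta}$), but this realizability does not survive the limit. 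In the non-metrizable case $|K^{\wedge}|=w(K)>\omega$, so your recursion has uncountable length; once the character $\widehat{d}\colon K^{\wedge}\to\mathbb{T}$ is fully specified, Pontryagin duality for the discrete group $K^{\wedge}$ says $\widehat{d}$ is evaluation at a \emph{unique} point of $K$, and the elements of $G$ witnessing the countable initial segments are in general pairwise distinct, with nothing forcing the one limiting point into $G$. The appeal to \cite[Lemma 2.37]{RT2021} cannot fill this hole: that argument is for compact $G$, where $G=K$ and every abstract character of $K^{\wedge}$ is automatically an evaluation at a point of $G$; there is no pseudocompact analogue of that fact.

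Indeed no bookkeeping could work, because the statement being proved is false for pseudocompact non-compact groups. Let $K=\mathbb{T}^{\omega_{1}}$ and let $G=\Sigma\mathbb{T}^{\omega_{1}}$ be the $\Sigma$-product of countably supported points. Then $G$ is countably compact (hence pseudocompact), connected, nontrivial and $G_{\delta}$-dense in $K$; every continuous character of $G$ extends to $K$, so $|G^{\wedge}|=|K^{\wedge}|=\omega_{1}=w(G)\leq\mathfrak{c}$, and all hypotheses of the lemma hold. Yet for any $d\in G$, picking a coordinate $\beta$ outside the countable support of $d$, the projection $\pi_{\beta}|_{G}$ is a continuous character of $G$ mapping onto $\mathbb{T}$ that annihilates $\langle d\rangle$, so $\langle d\rangle$ is not dense in $G$. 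Thus $G$ has no dense cyclic subgroup at all, let alone $\mathfrak{c}$ independent ones; your observation about the metrizable case, where $G_{\delta}$-density forces $G=K$, identifies essentially the only situation in which the method can succeed. For comparison, the paper's own proof runs through duality rather than through the completion: it embeds the divisible hull of $G^{\wedge}$ into $\mathbb{T}$ as $\bigoplus_{\alpha<\mathfrak{c}}K_{\alpha}$ and asserts, via $G\cong(G^{\wedge})^{\wedge}$, that each abstract embedding $\pi_{\alpha}\colon G^{\wedge}\to K_{\alpha}$ is evaluation at some $d_{\alpha}\in G$. It founders on the same rock: a character of $G^{\wedge}$ is continuous in the pointwise-on-$G$ topology (hence of the form $x\mapsto x(d)$ with $d\in G$) only if it already is such an evaluation, and the example above shows this can fail for \emph{every} injective character. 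The result is reliable only in the compact case, i.e.\ \cite[Theorem 2.38]{RT2021}.
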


\begin{proof}
From \cite[Corollary 2.2]{BT2012}, it follows that the canonical evaluation mapping $\alpha_{G}: G\rightarrow (G^{\wedge})^{\wedge}$
is a topological isomorphism. Since $G$ is a nontrivially connected pseudocompact abelian group, it follows from \cite[9.6.g and 9.6.h]{AT2008} that $G^{\wedge}$ is infinite and torsion-free, then $G^{\wedge}$ is a subgroup of a divisible abelian group $D$ by \cite[4.1.6]{R2012}. Put $$K=\bigcap\{S\subset D: S\ \mbox{is a divisible subgroup and}\ G^{\wedge}\subset S\}.$$Clearly, $G^{\wedge}$ is an essential subgroup of $K$ and $|G^{\wedge}|=|K|$, hence $K$ is torsion-free. Since $|G^{\wedge}|\leq w(G)\leq\mathfrak{c}$, we have $\omega\leq|K|\leq\mathfrak{c}$.
Therefore, it follows from \cite[4.1.5]{R2012} that $K$ is a direct sum of isomorphic
copies of $\mathbb{Q}$ and of quasicyclic groups. Since $K$ is torsion-free, we have $K\cong \mathbb{Q}^{(\kappa)}$, where $\kappa\leq\mathfrak{c}$, then $\mathbb{Q}^{(\mathfrak{c})}\cong \mathbb{Q}^{(\kappa\times \mathfrak{c})}\cong K^{(\mathfrak{c})}=\bigoplus_{\alpha<\mathfrak{c}}K_{\alpha}$, where every $K_{\alpha}$ is an isomorphic copy of $K$. It is easy to see that $\mathbb{Q}^{(\mathfrak{c})}$ is algebraically isomorphic to a subgroup of $\mathbb{T}$, hence we can identify $\bigoplus_{\alpha<\mathfrak{c}}K_{\alpha}$ with a subgroup of $\mathbb{T}$. For every $\alpha<\mathfrak{c}$, let $\pi_{\alpha}$ be an isomorphic embedding of $G^{\wedge}$ into $K_{\alpha}$; then we can consider $\pi_{\alpha}$ as a character of $G^{\wedge}$. Since $G\cong (G^{\wedge})^{\wedge}$, for every $\alpha<\mathfrak{c}$ there exists $d_{\alpha}\in G$ such that $x(d_{\alpha})=\pi_{\alpha}(x)$ for each $x\in G^{\wedge}$. We conclude that $B=\{d_{\alpha}: \alpha<\mathfrak{c}\}$ is independent.

Indeed, let $d=n_{1}d_{\alpha_{1}}+\cdots+n_{k}d_{\alpha_{k}}$, where $n_{1}, \ldots, n_{k}\in \mathbb{Z}\setminus\{0\}$ and $\alpha_{1}, \ldots, \alpha_{k}<\mathfrak{c}$. For any $x\in G^{\wedge}\setminus\{e_{G^{\wedge}}\}$, we have $x(d)=\pi_{\alpha_{1}}(x)^{n_{1}}\cdots \pi_{\alpha_{k}}(x)^{n_{k}}\neq 1$ since the elements $\pi_{\alpha_{1}}(x), \ldots, \pi_{\alpha_{k}}(x)$ of the group $\bigoplus_{\alpha<\mathfrak{c}}K_{\alpha}$ have infinite order and are independent.

Finally we prove that each cyclic group $\langle d_{\alpha}\rangle$ is dense in $G$. Suppose not, it follows from \cite[9.6.g]{AT2008} that there exists a nontrivial character $x\in G^{\wedge}$ such that $x(d_{\alpha})=1$. However, since $\pi_{\alpha}$ is injective and $x\neq e_{G^{\wedge}}$, we conclude that $x(d_{\alpha})=\pi_{\alpha}(x)\neq 1$, which is a contradiction.
\end{proof}

By Lemmas~\ref{2022l3} and~\ref{2022lll}, we have the following theorem.

\begin{theorem}\label{2022t5656}
Each nontrivial pseudocompact connected abelian group $G$ with $|G^{\wedge}|\leq w(G)\leq\mathfrak{c}$ is $d$-independent.
\end{theorem}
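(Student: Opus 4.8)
The plan is to verify the definition of $d$-independence directly, letting the two cited lemmas carry the structural work. Recall that $G$ is $d$-independent precisely when, for every subgroup $S$ of $G$ with $|S|<\mathfrak{c}$, one can find a countable dense subgroup $H$ with $S\cap H=\{e\}$. Since $G$ is a nontrivial pseudocompact connected abelian group satisfying $|G^{\wedge}|\leq w(G)\leq\mathfrak{c}$, Lemma~\ref{2022lll} supplies exactly the raw material we need: an independent family $\{d_{\alpha}:\alpha<\mathfrak{c}\}$ in which every cyclic subgroup $\langle d_{\alpha}\rangle$ is already dense in $G$. A single such cyclic subgroup will serve as the desired $H$, once we locate one that avoids $S$.

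First I would fix an arbitrary subgroup $S$ with $|S|<\mathfrak{c}$ and set $A=\{d_{\alpha}:\alpha<\mathfrak{c}\}$, so that $|A|=\mathfrak{c}$. Next I would check that $A$ meets the hypotheses of Lemma~\ref{2022l3}, namely that $\langle d_{\alpha}\rangle\cap\langle d_{\beta}\rangle=\{e\}$ for distinct $\alpha,\beta$. Each $d_{\alpha}$ has infinite order: its cyclic subgroup is dense in $G$, while $G$, being nontrivial and connected, is infinite, so a finite (hence closed) cyclic subgroup could not be dense. Given infinite order, the independence of the family forces $\langle d_{\alpha}\rangle\cap\langle d_{\beta}\rangle=\{e\}$, since any common element $nd_{\alpha}=md_{\beta}$ yields $nd_{\alpha}-md_{\beta}=e$, whence $nd_{\alpha}=e=md_{\beta}$ and thus the element is $e$. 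I would then invoke Lemma~\ref{2022l3} with this $A$ and the subgroup $S$ to produce some $\alpha<\mathfrak{c}$ with $\langle d_{\alpha}\rangle\cap S=\{e\}$.

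Finally I would take $H=\langle d_{\alpha}\rangle$. This $H$ is countable, being cyclic of infinite order; it is dense in $G$ by the choice of the family in Lemma~\ref{2022lll}; and $H\cap S=\{e\}$ by construction. As $S$ was arbitrary, $G$ is $d$-independent. The genuine difficulty of the theorem is not in this assembly but in Lemma~\ref{2022lll}, which manufactures $\mathfrak{c}$-many independent dense cyclic subgroups using Pontryagin duality and the structure of the torsion-free dual $G^{\wedge}$; granting that lemma, the only point requiring care here is the verification that the cyclic subgroups have infinite order and are pairwise trivially intersecting, so that Lemma~\ref{2022l3} applies.
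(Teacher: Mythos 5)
Your proposal is correct and follows exactly the route the paper intends: the paper states this theorem with the remark ``By Lemmas~\ref{2022l3} and~\ref{2022lll}, we have the following theorem,'' and your argument is precisely that assembly, including the verification (via independence) that the cyclic subgroups $\langle d_{\alpha}\rangle$ intersect pairwise trivially so that Lemma~\ref{2022l3} applies. Your extra observation that each $d_{\alpha}$ has infinite order is harmless but not strictly needed, since independence alone already yields $\langle d_{\alpha}\rangle\cap\langle d_{\beta}\rangle=\{e\}$ and cyclic subgroups are countable in any case.
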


Since the weight of an infinite compact abelian group $G$ coincides with the cardinality of $G^{\wedge}$, it follows from Theorem~\ref{2022t5656} that the following corollary holds.

\begin{corollary}\cite[Theorem 2.38]{RT2021}
Each nontrivial compact connected abelian group $G$ with $w(G)\leq\mathfrak{c}$ is $d$-independent.
\end{corollary}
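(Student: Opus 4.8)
The plan is to derive this statement directly from Theorem~\ref{2022t5656} by verifying its hypotheses in the compact case. First I would observe that a nontrivial connected space cannot be finite, so $G$ is in fact an infinite compact abelian group. Since every compact space is pseudocompact, $G$ is a nontrivial pseudocompact connected abelian group, which already places us in the setting of Theorem~\ref{2022t5656}; it remains only to check the cardinal condition $|G^{\wedge}|\leq w(G)\leq\mathfrak{c}$.

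The key step is to invoke the duality fact recorded just before the statement: for an infinite compact abelian group the weight coincides with the cardinality of the dual, that is, $w(G)=|G^{\wedge}|$. This is the standard Pontryagin duality identity---the dual $G^{\wedge}$ of a compact abelian group is discrete, and its characters determine the topology of $G$ so tightly that the minimal cardinality of a base equals $|G^{\wedge}|$. Granting this, the hypothesis $w(G)\leq\mathfrak{c}$ immediately gives $|G^{\wedge}|=w(G)\leq\mathfrak{c}$, and in particular the chain of inequalities $|G^{\wedge}|\leq w(G)\leq\mathfrak{c}$ demanded by Theorem~\ref{2022t5656} holds (here with the first inequality being an equality).

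Finally, applying Theorem~\ref{2022t5656} to $G$ yields that $G$ is $d$-independent, which completes the argument. I do not expect any genuine obstacle here: the entire analytic content has already been absorbed into Theorem~\ref{2022t5656} and, beneath it, Lemmas~\ref{2022l3} and~\ref{2022lll}. The role of this corollary is purely to specialize the pseudocompact statement to the compact situation, where the inequality $|G^{\wedge}|\leq w(G)$ improves to the exact equality $|G^{\wedge}|=w(G)$ and the cardinal hypothesis is therefore automatic once $w(G)\leq\mathfrak{c}$ is assumed.
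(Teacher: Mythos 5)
Your proposal is correct and follows exactly the paper's route: the paper derives this corollary from Theorem~\ref{2022t5656} via the single observation that for an infinite compact abelian group $w(G)=|G^{\wedge}|$, so the cardinal hypothesis $|G^{\wedge}|\leq w(G)\leq\mathfrak{c}$ is automatic. Your additional remarks that nontrivial connected forces $G$ to be infinite and that compactness gives pseudocompactness are precisely the implicit steps the paper leaves unstated.
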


\begin{corollary}
Let $G$ be a nontrivial divisible pseudocompact abelian group with $|G^{\wedge}|\leq w(G)\leq\mathfrak{c}$. Then $G$ is $d$-independent.
\end{corollary}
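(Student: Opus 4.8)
The plan is to reduce this corollary to Theorem~\ref{2022t5656}. Observe that the hypotheses of the present statement coincide with those of Theorem~\ref{2022t5656} except that ``connected'' has been replaced by ``divisible,'' while the cardinal conditions $|G^{\wedge}|\leq w(G)\leq\mathfrak{c}$ are identical. Hence it suffices to establish the purely algebraic-topological fact that \emph{every nontrivial divisible pseudocompact abelian group $G$ is connected}, and then to invoke Theorem~\ref{2022t5656} verbatim. This mirrors the compact situation already exploited in Section~3, where a compact abelian group is connected iff it is divisible.

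First I would pass to the dual. A short computation shows that divisibility forces $G^{\wedge}$ to be torsion-free: if $\chi\in G^{\wedge}$ satisfies $n\chi=0$ for some $n\in\mathbb{N}$, then for every $x\in G$ one may write $x=ny$ with $y\in G$, whence $\chi(x)=n\chi(y)=0$, so $\chi=0$. Next, since $G$ is pseudocompact it is precompact, so its Ra\u{\i}kov completion $K$ is a compact abelian group in which $G$ is dense; as characters are uniformly continuous and $G$ is dense, restriction and extension yield a group isomorphism $K^{\wedge}\cong G^{\wedge}$, so $K^{\wedge}$ is torsion-free as well. By the duality for compact abelian groups used throughout Section~3 (a compact abelian group is connected iff its dual is torsion-free), $K$ is connected.

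The remaining step is to transfer connectedness down from $K$ to the dense subgroup $G$, and this is where I expect the main obstacle to lie, since connectedness is \emph{not} inherited by arbitrary dense subgroups (for instance $\mathbb{Q}/\mathbb{Z}$ is a totally disconnected dense subgroup of the connected group $\mathbb{T}$). Here pseudocompactness is essential: by the Comfort--Ross theorem a pseudocompact group is $G_{\delta}$-dense in its completion and satisfies $\beta G=K$. Suppose, toward a contradiction, that $G$ were disconnected, say $G=C\sqcup D$ with $C,D$ nonempty, disjoint and clopen in $G$. The indicator $f\colon G\to\{0,1\}$ is then continuous and bounded, so it extends to a continuous $\tilde{f}\colon \beta G=K\to\mathbb{R}$. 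Since $K$ is connected, $\tilde{f}(K)$ is a connected subset of $\mathbb{R}$; but $G$ is dense in $K$ and $\tilde{f}|_{G}=f$ takes only the values $0$ and $1$, so $\tilde{f}(K)\subseteq\overline{\{0,1\}}=\{0,1\}$. A connected subset of $\{0,1\}$ is a singleton, which contradicts the fact that $\tilde{f}(K)\supseteq\tilde{f}(G)=\{0,1\}$ contains two points. Hence $G$ is connected.

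Having shown that $G$ is a nontrivial connected pseudocompact abelian group satisfying $|G^{\wedge}|\leq w(G)\leq\mathfrak{c}$, the conclusion that $G$ is $d$-independent follows immediately from Theorem~\ref{2022t5656}.
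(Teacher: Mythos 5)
Your proposal is correct and follows essentially the same route as the paper: the paper's proof is exactly the reduction you describe, citing \cite[9.11.A]{AT2008} for the fact that a divisible pseudocompact abelian group is connected and then invoking Theorem~\ref{2022t5656}. The only difference is that you supply a self-contained (and valid) proof of that cited fact, via torsion-freeness of the dual, connectedness of the compact completion, and the Comfort--Ross equality $\beta G=K$ to pull connectedness back to $G$.
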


\begin{proof}
By \cite[9.11.A]{AT2008}, $G$ is connected. Then $G$ is $d$-independent by Theorem~\ref{2022t5656}.
\end{proof}

We do not know if we can drop ``$|G^{\wedge}|\leq w(G)$'' in Theorem~\ref{2022t5656}. The answer is positive if the following two questions are affirmative.

\begin{question}
Assume that $G$ is a precompact abelian group and $H$ a closed subgroup of $G^{\wedge}$ separating the points of $G$. Is $H=G^{\wedge}$?
\end{question}

\begin{question}
Does the weight of an infinite pseudocompact abelian group $G$ coincide with the cardinality of $G^{\wedge}$?
\end{question}

  %%%%%%%%%%%%%%%%%%%%%%%%%%%%


\begin{thebibliography}{99}
 \bibitem{A1994} A.V. Arhangel'ski\v{\i}, {\it On countably compact and initially $\omega_{1}$-compact topological spaces and groups}, Math. Japonica, 40(1994) 39--53.

\bibitem{AT2008} A.V. Arhangel'ski\v{\i}, M. Tkachenko,
  {\it Topological groups and related structures}, Atlantis Press, Paris; World
  Scientific Publishing Co. Pte. Ltd., Hackensack, NJ, 2008.

\bibitem{BT2012} M. Bruguera, M.M. Tkachenko, {\it Pontryagin duality in the class of precompact abelian groups and the Baire property}, J. Pure Appl. Alge., 216(12)(2012) 2636--2647.

\bibitem{CD2014} W.W. Comfort, D.N. Dikranjan, {\it The density nucleus of a topological group}, Topol. Proc., 44(2014) 325--356.

\bibitem{CI1977} W.W. Comfort, G.L. Itzkowitz, {\it Density character in topological groups}, Math. Ann., 226(3)(1977) 223--227.

\bibitem{DPS1990} D.N. Dikranjan, I. Prodanov, L. Stoyanov, {\it Topological groups: characters, dualities and
minimal group topologies}, Monographs and Textbooks in Pure and Apllied Mathematics 130 ,Marcel
Dekker Inc., New York-Basel, 1989.

\bibitem{DMT2005} D.N. Dikranjan, C. Milan, A. Tonolo, {\it A characterization of the maximally almost periodic abelian groups}, J. Pure Appl. Alge., 197(1-3)(2005) 23--41.

\bibitem{DD2005} D.N. Dikranjan, D. Shakhmatov, {\it Forcing hereditarily separable compact-like group topologies on abelian groups}, Topol. Appl., 151(2005) 2--54.

\bibitem{DS2016} D.N. Dikranjan, D. Shakhmatov, {\it A complete solution of Markov's problem on connected group topologies}, Adv. Math., 286(2016) 286--307.

\bibitem{DJ2020} A. Dow, I. Juh\'{a}sz, {\it Dense $k$-separable compacta are densely separable}, Topol. Appl., 283(2020) 107351.

\bibitem{E1989} R. Engelking, {\it General Topology}, PWN, Warzawa, 1989.

\bibitem{E1965} R. Engelking, {\it Cartesian products and dyadic spaces}, Fundam. Math., 57(1965) 287--304.

\bibitem{e41} M. Fern$\acute{a}$ndez, {\it On some classes of paratopological groups}, Topology Proc., {\bf 40}(2012) 63--72.

\bibitem{G1997} P.M. Gartside, {\it Cardinal invariants of monotonically normal spaces}, Topol. Appl., 77(3)(1997) 303--314.

\bibitem{HJ1976} A. Hajnal, I. Juh\'{a}sz, {\it A separable normal topological group need not be Lindel\"{o}f}, Gen. Topol. Appl., 6(2)(1976) 199--205.

\bibitem{H1984} R. E. Hodel, {\it Cardinal Functions I, Chapter 1. In Handbook of Set-Theoretic Topology}; Kunen, K., Vaughan, J.E., Eds.;
North Holland: Amsterdam, The Netherlands; New York, NY, USA; Oxford, UK, 1984.

\bibitem{HM1998} K.H. Hofmann, S.A. Morris, {\it The Structure of Compact Groups: A Primer for Students-A
Handbook for Experts}, Walter de Gruyter Publ., Berlin, 1998.

\bibitem{JS1989} I. Juh\'{a}sz, S. Shelah, {\it $\pi(X)=\delta(X)$ for compact $X$}, Topol. Appl., 32(1989) 289--294.

\bibitem{JKS2009} I. Juh\'{a}sz, P. Koszmider, L. Soukup, {\it A first-countable, initially $\omega_{1}$-compact but non-compact space}, Topol. Appl., 156(2009) 1863--1879.

\bibitem{L1992} L.B. Lawrence, {\it Lindel\"{o}f Spaces concentrated on bernstin subsets of the real line}, Proc. Amer. Math. Soc., 114(1)(1992) 211--215.

\bibitem{LMT2017} A. Leiderman, S. Morris, M. Tkachenko, {\it Density character of subgroups of topological groups}, Tran. Amer. Math. Soc., 369(8)(2017) 5645--5664.

\bibitem{LM1975} R. Levy, R.H. McDowell, {\it Dense subsets of $\beta X$}, Proc. Amer. Math. Soc., 50(1)(1975) 426--430.

\bibitem{LL2012} F. Lin, C. Liu, {\it On paratopological groups}, Topol. Appl., 159(2012) 2764--2773.

\bibitem{Shoulin2004} S. Lin, {\it The Topology of Metric Spaces and Function Spaces (in Chinese)}, Science Press, Beijing, 2004.

\bibitem{LY2016} S. Lin, Z. Yun, Generalized metric spaces and
mappings, Atlantis Press, 2016.

\bibitem{LLL2015} X. Liu, F. Lin, C. Liu, {\it A note on almost topological groups}, J. Math. Res. Appl., 35 (2015) 85--91.

\bibitem{M2008} E. Momtahan, {\it Algebraic characterization of dense-separability among compact spaces}, Commun. Algebra, 36(4)(2008) 1484--1488.

\bibitem{R2012} D.J. Robinson, {\it A Course in the Theory of Groups}, Springer Science \& Business Media, 2012.

\bibitem{RT2021} E. M. Rodr\'{\i}guez, M. Tkachenko, {\it $D$-independent topological groups}, Topol. Appl., 300 (2021) 107761.

\bibitem{T2010} V.V. Tkachuk,  {\it Lindel\"{o}f $\Sigma$-spaces: an omnipresent class}, RACSAM, 104(2)(2010) 221--244.

\bibitem{W1937} A. Weil, {\it Sur les espaces \'{a} structure uniforme et sur la topologie g\'{e}n\'{e}rale}, Hermann \& Paris. Publ. Math. Univ. Strasbourg, 1937.

\bibitem{WS1976} J.H. Weston, J. Shilleto, {\it Cardinalities of dense sets}, Topol. Appl., 6(1976) 227--240.

\bibitem{Z1980} P. Zenor, {\it Hereditary $\mathfrak{m}$-separability and the hereditary $\mathfrak{m}$-Lindel\"{o}f property in product spaces and function spaces}, Fund. Math., 106(3)(1980) 175--180.
  \end{thebibliography}
  \end{document}